\numberwithin{equation}{section}
\newtheorem{thm}{Theorem}
\newtheorem{prp}{Proposition}[section] 
\newtheorem{lmm}[prp]{Lemma}  
\newtheorem{crl}[prp]{Corollary}
\theoremstyle{definition}
\newtheorem{dfn}{Definition}
\newtheorem{eg}{Example}
\newtheorem{rmk}[prp]{Remark}
\DeclareFontFamily{U}{mathx}{\hyphenchar\font45}
\DeclareFontShape{U}{mathx}{m}{n}{
      <5> <6> <7> <8> <9> <10>
      <10.95> <12> 
      mathx10
      }{}
\DeclareSymbolFont{mathx}{U}{mathx}{m}{n}
\DeclareMathAccent{\widecheck}{0}{mathx}{"71}
\def\ov#1{\overline{#1}}
\def\tn#1{\textnormal{#1}}
\def\sf#1{\textsf{#1}}
\def\BE#1{\begin{equation}\label{#1}}
\def\EE{\end{equation}}
\def\eref#1{(\ref{#1})}
\def\lr#1{\langle#1\rangle}
\def\blr#1{\big\langle#1\big\rangle}
\def\bblr#1{\bigg\langle#1\bigg\rangle}
\def\wc#1{\widecheck{#1}}
\def\wh#1{\widehat{#1}}
\def\wt#1{\widetilde{#1}}
\def\sm#1{\begin{small}#1\end{small}}
\def\lra{\longrightarrow}
\def\C{\mathbb C}
\def\E{\mathbb E}
\def\cH{\mathcal H}
\def\cM{\mathcal M}
\def\chM{\wc\cM}
\def\fM{\mathfrak M}
\def\cN{\mathcal N}
\def\cO{\mathcal O}
\def\P{\mathbb P}
\def\Q{\mathbb Q}
\def\R{\mathbb R}
\def\S{\mathbb S}
\def\T{\mathbb T}
\def\Th{\Theta}
\def\cU{\mathcal U}
\def\chU{\wc\cU}
\def\Z{\mathbb Z}
\def\cZ{\mathcal Z}
\def\al{\alpha}
\def\de{\delta}
\def\ep{\epsilon}
\def\io{\iota}
\def\ka{\kappa}
\def\la{\lambda}
\def\na{\nabla}
\def\om{\omega}
\def\si{\sigma}
\def\ups{\upsilon}
\def\vph{\varphi}
\def\ze{\zeta}
\def\De{\Delta}
\def\Ga{\Gamma}
\def\Si{\Sigma}
\def\fI{\mathfrak i}
\def\fj{\mathfrak j}
\def\s{\mathbf s}
\def\i{\infty}
\def\dbar{\bar\partial}
\def\eset{\emptyset}
\def\1{\mathbf 1}
\def\psb{\psi_{\textnormal{b}}}
\def\pst{\psi_{\textnormal{t}}}
\def\lb{\la_{\textnormal{b}}}
\def\lt{\la_{\textnormal{t}}}
\def\bfm{\mathbf m}
\def\nd{\textnormal{d}}
\def\ev{\textnormal{ev}}
\def\gl{\textnormal{gl}}
\def\GW{\textnormal{GW}}
\def\Hol{\textnormal{Hol}}
\def\hor{\textnormal{hor}}
\def\id{\textnormal{id}}
\def\Im{\textnormal{Im}}
\def\Obs{\textnormal{Obs}}
\def\ord{\textnormal{ord}}
\def\PD{\textnormal{PD}}
\def\pt{\textnormal{pt}}
\def\st{\textnormal{st}}
\def\vir{\tn{vir}}
\def\vrt{\tn{vrt}}
\begin{document}

\title{Absolute vs.~Relative Gromov-Witten Invariants}
\author{Mohammad F.~Tehrani and Aleksey Zinger\thanks{Partially supported by NSF grant 0846978}}
\date{\small May 3, 2014. Updated: \today}
\maketitle

\begin{abstract}

\noindent
In light of recent attempts to extend the Cieliebak-Mohnke approach for
constructing Gromov-Witten invariants to positive genera,
we compare the absolute and relative Gromov-Witten invariants
of compact symplectic manifolds 
when the symplectic hypersurface contains no relevant holomorphic curves.
We show that these invariants are then the same, except in 
a narrow range of dimensions of the target and genera of the domains,
and provide examples when they fail to be the same.
\end{abstract}

\tableofcontents

\section{Introduction}
\label{intro_sec}

\noindent
Gromov-Witten invariants of a compact symplectic manifold $(X,\om)$
are certain, often delicate, counts of $J$-holomorphic curves in~$X$;
they play prominent roles in symplectic topology, algebraic geometry, and 
string theory.
For a \textsf{symplectic hypersurface}~$V$ in~$(X,\om)$,
i.e.~a closed symplectic submanifold of real codimension~2,
relative Gromov-Witten invariants of~$(X,\om,V)$ count $J$-holomorphic curves in~$X$ 
with specified contacts with~$V$.
If $V$ contains no (non-constant) $J$-holomorphic curves that could possibly contribute
to a specific absolute invariant of~$X$, 
one could hope that such an absolute invariant equals the corresponding relative invariant
with the basic contact condition, divided by the number of orderings of the contact points.
We show that this is indeed the case, except in a narrow range of dimensions 
of the target and genera of the domains; 
see Theorem~\ref{main_thm} and Remarks \ref{mainthm_rmk}-\ref{IPcond_rmk}.
Examples~\ref{deg0_eg}-\ref{P4_eg} illustrate the three cases when 
the absolute and relative invariants can fail to be equal.
We also overview the geometric approach to constructing genus~0 Gromov-Witten invariants
suggested in~\cite{CM} and the attempts to extend this approach to higher genera
in~\cite{G} and~\cite{IPvfc}.\\

\noindent
For $g,k\!\in\!\Z^{\ge0}$,  we denote by $\ov\cM_{g,k}$ the Deligne-Mumford moduli space 
of stable $k$-marked genus~$g$ connected nodal curves.
If $2g\!+\!k\!<\!3$, $\ov\cM_{g,k}$ is empty with this definition, though 
it is often convenient to formally take it to be a point in these cases,
as done when we set up notation for GW-invariants below.
If $g,k\!\in\!\Z^{\ge0}$, $A\!\in\!H_2(X;\Z)$, and  
$J$ is an an almost complex structure on~$X$ compatible with (or tamed by)~$\om$,
let $\ov\fM_{g,k}(X,A)$ denote the moduli spaces of stable $J$-holomorphic $k$-marked maps 
from connected nodal curves of genus~$g$. 
If in addition $V\!\subset\!X$ is a symplectic hypersurface, 
$\s\!\equiv\!(s_1,\ldots,s_{\ell})$ is an $\ell$-tuple of positive integers such~that 
\BE{bsumcond_e} s_1+\ldots+s_{\ell}=A\cdot V,\EE
and $J$ is compatible with~$V$ in a suitable sense, let $\ov\fM_{g,k;\s}^V(X,A)$ 
denote the moduli spaces of  stable $J$-holomorphic $(k\!+\!\ell)$-marked maps 
from connected nodal curves of genus~$g$ 
that have contact with~$V$ at the last $\ell$ marked points of orders $s_1,\ldots,s_{\ell}$,
respectively.
These moduli spaces are introduced in \cite{LR,IPrel,Jun1} under certain assumptions on~$J$
and reviewed in Section~\ref{RelDfn_sec}.
The expected dimensions of these two moduli spaces are given~by
\BE{virdim_e}\begin{split}
\dim^{\vir}\ov\fM_{g,k}(X,A)&=2\big(\lr{c_1(X),A}+(n\!-\!3)(1\!-\!g)+k\big),\\
\dim^{\vir}\ov\fM_{g,k;\s}^V(X,A)&=2\big(\lr{c_1(X),A}+(n\!-\!3)(1\!-\!g)+k+\ell(\s)\!-\!|\s|\big),
\end{split}\EE
where $\ell(\s)\!\equiv\!\ell$  and $|\s|\!\equiv\!s_1\!+\!\ldots\!+\!s_{\ell}$.
In particular, these dimensions are the same~if 
$$\s=\1_{\ell}\equiv\big(\underset{\ell}{\underbrace{1,\ldots,1}}\big)\,, $$
i.e.~the tuple~$\s$ imposes no contact conditions on degree~$A$ $J$-holomorphic curves,
beyond what a generic such curve can be expected to satisfy.\\

\noindent 
For each $i\!=\!1,\ldots,k$, let 
\BE{evidfn_e}\ev_i\!: \ov\fM_{g,k}(X,A),\ov\fM_{g,k;\s}^V(X,A) \lra X\EE
be the $i$-th evaluation map.
It sends the equivalence class of a  $J$-holomorphic map $u\!:\Si\!\lra\!X$
from a genus~$g$ nodal curve~$\Si$ to $u(x_i)\!\in\!X$, where  
$x_i\!\in\!\Si$ is the $i$-th marked point.
Let
\BE{stdfn_e}\st\!: \ov\fM_{g,k}(X,A),\ov\fM_{g,k;\s}^V(X,A)\lra \ov\cM_{g,k}\EE
denote the forgetful morphism to the Deligne-Mumford space.
If $2g\!+\!k\!\ge\!3$, 
it sends the equivalence class of a $J$-holomorphic map $u\!:\Si\!\lra\!X$
from a marked genus~$g$ nodal curve~$\Si$ to 
the equivalence class of the stable $k$-marked genus~$g$ nodal curve~$\Si'$
obtained from $(\Si,x_1,\ldots,x_k)$ by contracting the unstable components
(spheres with one or two special, i.e.~nodal or marked, points);
see Figure~\ref{st_fig}.\\

\begin{figure}
\begin{pspicture}(-4,-5)(10,0)
\psset{unit=.3cm}
\psellipse[linewidth=.08](0,-4)(5,2)\pscircle[linewidth=.08](7,-4){2}  
\pscircle*(5,-4){.25}
\psarc[linewidth=.08](-2.5,-7){3.16}{65}{115}
\psarc[linewidth=.08](-2.5,-1){3.16}{245}{295}
\psarc[linewidth=.08](2.5,-7){3.16}{65}{115}
\psarc[linewidth=.08](2.5,-1){3.16}{245}{295}
\rput(5,-2){$u$}\rput(15,-3){$\st$}
\psline[linewidth=.12]{->}(12,-4)(18,-4)
\psellipse[linewidth=.08](25,-4)(5,2)
\psarc[linewidth=.08](22.5,-7){3.16}{65}{115}
\psarc[linewidth=.08](22.5,-1){3.16}{245}{295}
\psarc[linewidth=.08](27.5,-7){3.16}{65}{115}
\psarc[linewidth=.08](27.5,-1){3.16}{245}{295}
\psellipse[linewidth=.08](0,-14)(5,2)\pscircle[linewidth=.08](7,-14){2}  
\pscircle[linewidth=.08](7,-10){2} 
\pscircle*(5,-14){.25}
\pscircle*(7,-12){.25}\pscircle*(5.59,-8.59){.25}\pscircle*(8.41,-8.59){.25}  
\rput(4.7,-8.3){$x_1$}\rput(9.3,-8.3){$x_2$}
\psarc[linewidth=.08](-2.5,-17){3.16}{65}{115}
\psarc[linewidth=.08](-2.5,-11){3.16}{245}{295}
\psarc[linewidth=.08](2.5,-17){3.16}{65}{115}
\psarc[linewidth=.08](2.5,-11){3.16}{245}{295}
\rput(5,-12){$u$}\rput(15,-13){$\st$}
\psline[linewidth=.12]{->}(12,-14)(18,-14)
\psellipse[linewidth=.08](25,-14)(5,2)\pscircle[linewidth=.08](32,-14){2}  
\pscircle*(30,-14){.25}\pscircle*(33.41,-12.59){.25}\pscircle*(33.41,-15.41){.25}
\rput(34.2,-12.2){$x_1$}\rput(34.4,-15.8){$x_2$}
\psarc[linewidth=.08](22.5,-17){3.16}{65}{115}
\psarc[linewidth=.08](22.5,-11){3.16}{245}{295}
\psarc[linewidth=.08](27.5,-17){3.16}{65}{115}
\psarc[linewidth=.08](27.5,-11){3.16}{245}{295}
\end{pspicture}
\caption{Examples of the stabilization morphism~\eref{stdfn_e}.}
\label{st_fig}
\end{figure}
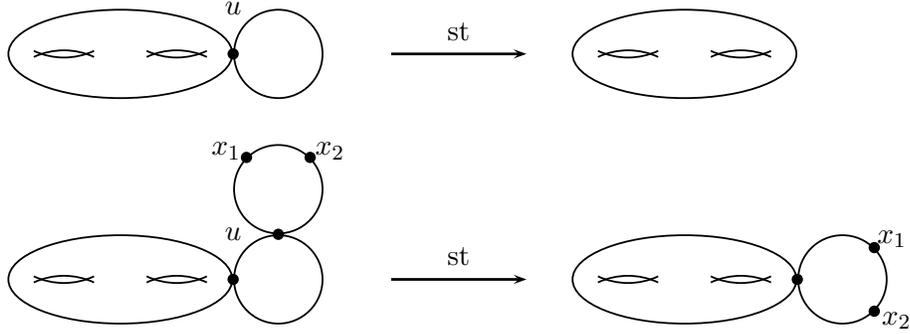

\noindent
Along with the virtual class for $\ov\fM_{g,k}(X,A)$,
constructed in \cite{RT2} in ``semi-positive" cases,
in \cite{BF} in the algebraic case, and in~\cite{FO,LT} in the general case, 
the first morphisms in~\eref{evidfn_e} and~\eref{stdfn_e} give rise
to the (\sf{absolute}) \sf{GW-invariants} of~$(X,\om)$:
\BE{absGWdfn_e}
\GW_{g,A}^X(\ka;\al_1,\ldots,\al_k)
\equiv\bblr{\!\st^*\ka\prod_{i=1}^k\!\ev_i^*\al_i,[\ov\fM_{g,k}(X,A)]^{\vir}\!}
\quad\forall\,\ka\!\in\!H^*(\ov\cM_{g,k}),\,\al_i\!\in\!H^*(X),\EE
where $H^*$ denotes the cohomology with $\Q$-coefficients.
The number above vanishes unless
\BE{AbsCond_e}\deg\ka+\sum_{i=1}^k\deg\al_i=
2\big(\lr{c_1(X),A}+(n\!-\!3)(1\!-\!g)+k\big)\,.\EE
Along with the virtual class for $\ov\fM_{g,k;\s}^V(X,A)$, 
the second morphisms in~\eref{evidfn_e} and~\eref{stdfn_e} give rise
to the \sf{relative GW-invariants} of~$(X,\om,V)$:
\BE{relGWdfn_e}
\GW_{g,A;\s}^{X,V}(\ka;\al_1,\ldots,\al_k)
\equiv\bblr{\!\st^*\ka\prod_{i=1}^k\!\ev_i^*\al_i,[\ov\fM_{g,k;\s}^V(X,A)]^{\vir}\!}
\quad\forall\,\ka\!\in\!H^*(\ov\cM_{g,k}),\,\al_i\!\in\!H^*(X).\EE
Such a virtual class is constructed in \cite{IPrel} in ``semi-positive" cases
and in \cite{Jun1} in the algebraic case and is used in~\cite{LR} in the general case;
see Section~\ref{RelDfn_sec} for more details.
The number in~\eref{relGWdfn_e} vanishes unless
$$\deg\ka+\sum_{i=1}^k\deg\al_i=
2\big(\lr{c_1(X),A}+(n\!-\!3)(1\!-\!g)+k+\ell(\s)\!-\!|\s|\big)\,.$$
The numbers in~\eref{absGWdfn_e} and~\eref{relGWdfn_e} are (graded-) symmetric 
and linear in the inputs~$\al_i$.
By the latter property, they give rise to well-defined numbers
$$\GW_{g,A}^X(\ka;\al),\GW_{g,A;\s}^{X,V}(\ka;\al)\in\Q
\qquad\forall \,\ka\!\in\!H^*(\ov\cM_{g,k}),\,\al\!\in\!H^*(X)^{\otimes k}\,.$$
The numbers 
$$\GW_{g,A}^X(\al)\equiv \GW_{g,A}^X(1;\al)
\qquad\hbox{and}\qquad
\GW_{g,A;\s}^{X,V}(\al)=\GW_{g,A;\s}^{X,V}(1;\al)$$
are called \sf{primary} GW-invariants or \sf{GW-invariants with primary insertions}.
In some cases, the numbers~\eref{absGWdfn_e} and~\eref{relGWdfn_e} can be described
as signed counts of concrete geometric objects, 
$J$-holomorphic or $(J,\nu)$-holomorphic maps; see 
Sections~\ref{RelDfn_sec} and~\ref{CM_sec}. 

\begin{rmk}\label{psiclass_rmk}
The numbers~\eref{absGWdfn_e} and~\eref{relGWdfn_e} do not cover 
GW-invariants that arise from natural classes on  $\ov\fM_{g,k}(X,A)$ and 
$\ov\fM_{g,k;\s}^V(X,A)$,
such as $\psi$-classes (which are generally different from the $\psi$-classes on $\ov\cM_{g,k}$
pulled back by the morphism~\eref{stdfn_e}) and the euler classes of obstruction bundles of various kinds;
both types of classes are central to GW-theory.
The geometric constructions of the numbers~\eref{absGWdfn_e} and~\eref{relGWdfn_e} 
reviewed in Sections~\ref{RelDfn_sec} and~\ref{CM_sec} are not compatible with 
such classes.
\end{rmk}

\begin{dfn}\label{hollow_dfn}
Let $(X,\om)$ be a compact symplectic manifold, $g\!\in\!\Z^{\ge0}$, and $A\!\in\!H_2(X;\Z)$.
A~symplectic hypersurface $V\!\subset\!X$ is \sf{$(g,A)$-hollow}
if there exists an $\om|_V$-tame almost complex structure~$J_V$ on~$V$ such that 
every non-constant $J_V$-holomorphic map 
$u\!:\Si\!\lra\!V$ from a smooth connected Riemann surface~$\Si$ satisfies
$$g(\Si)>g, \qquad\hbox{or}\qquad \lr{u^*\om,\Si}\!>\!\om(A), \qquad\hbox{or}\qquad 
\lr{u^*\om,\Si}=\om(A),~u_*[\Si]\!\neq\!A.$$
\end{dfn}

\begin{thm}\label{main_thm}
Suppose $(X,\om)$ is a compact symplectic manifold of real dimension~$2n$, 
$g,k\!\in\!\Z^{\ge0}$, \hbox{$A\!\in\!H_2(X;\Z)$}, and
$V\!\subset\!X$ is a $(g,A)$-hollow symplectic hypersurface such that $A\!\cdot\!V\!\ge\!0$.
If 
\BE{AReqcond_e}(g,A)\neq (1,0) \qquad\hbox{and}\qquad (n\!-\!5)g(g\!-\!1)\ge0\,,\EE
then the absolute GW-invariants~\eref{absGWdfn_e} and
the basic corresponding relative GW-invariants~\eref{relGWdfn_e} agree: 
\BE{AReq_e}
\GW_{g,A}^X(\ka;\al)=\frac{1}{(A\cdot V)!}\GW_{g,A;\1_{A\cdot V}}^{X,V}(\ka;\al)
\qquad\forall~\ka\in H^*(\ov\cM_{g,k}),~\al\in H^*(X)^{\otimes k}\,.\EE
This identity also holds if  $\ka\!=\!1$, $A\!\neq\!0$, and either $g\!=\!2$ or $n\!\neq\!4$. 
\end{thm}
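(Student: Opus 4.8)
The plan is to exhibit the relative moduli space as a $(A\!\cdot\!V)!$-fold branched cover of the absolute one and to compare their virtual classes. Let
\[
\pi\!:\ov\fM_{g,k;\1_{A\cdot V}}^V(X,A)\lra\ov\fM_{g,k}(X,A)
\]
be the forgetful morphism that drops the relative structure (the expanded targets and the rubber levels) together with the last $A\!\cdot\!V$ marked points and contracts the domain components that become unstable. For $i\!\le\!k$ the maps $\ev_i$ and the stabilization morphisms to $\ov\cM_{g,k}$ factor through $\pi$, the one point needing care being that omitting the contact markings does not alter the stabilization of the domain in the relevant range of genera. Granting this, the projection formula reduces the theorem to the identity
\[
\pi_*\big[\ov\fM_{g,k;\1_{A\cdot V}}^V(X,A)\big]^{\vir}=(A\!\cdot\!V)!\,\big[\ov\fM_{g,k}(X,A)\big]^{\vir},
\]
which is at least dimensionally consistent: the two virtual dimensions in~\eref{virdim_e} coincide because $\s\!=\!\1_{A\cdot V}$, so $\pi$ is virtually of relative dimension~$0$.

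Next I would fix an $\om$-tame almost complex structure $J$ on $X$ that is admissible for the relative theory of Section~\ref{RelDfn_sec} and whose restriction to $V$ is (or is $C^\infty$-close to) the hollow structure $J_V$ of Definition~\ref{hollow_dfn}, perturbing $J$ on $X\!\setminus\!V$ as far as transversality allows. The crucial consequence of hollowness is that no stable $J$-holomorphic map of arithmetic genus $\le g$ carrying $\om$-energy $\le\om(A)$ has a non-constant component mapped into~$V$: such a component would be $J_V$-holomorphic of genus $\le g$ and energy $\le\om(A)$, which Definition~\ref{hollow_dfn} excludes except in class $A$ with energy exactly $\om(A)$, and that residual possibility is removed once the contact and energy bookkeeping and the hypothesis $A\!\cdot\!V\!\ge\!0$ are combined. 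In particular no non-vertical rubber occurs, and the only components of the expanded target that can appear are multiple covers of the fibres of the normal $\P^1$-bundle $\P(\cN_V\!\oplus\!\cO)$ over~$V$.

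On the open substack $\cU\!\subset\!\ov\fM_{g,k}(X,A)$ of maps with no component in $V$ and no rubber --- equivalently, maps meeting $V$ at exactly $A\!\cdot\!V$ distinct points, each transversally --- the morphism $\pi$ restricts to an honest $(A\!\cdot\!V)!$-to-one \'etale cover, given by the orderings of those points; moreover the relative deformation--obstruction theory over $\pi^{-1}(\cU)$ is the absolute one twisted down at the $A\!\cdot\!V$ contact points, and since $\s\!=\!\1_{A\cdot V}$ this twist shifts only the deformation space (by exactly $A\!\cdot\!V$) and leaves the obstruction bundle unchanged. Hence the displayed identity holds over $\cU$, and it remains to show that the rubber strata in the complement do not spoil it. Decomposing the relative virtual class along those strata, their contribution is governed by the virtual dimensions of the vertical-rubber moduli spaces: a vertical rubber component of arithmetic genus $h$ over a point of $V$ absorbs genus $h$ from the domain, and the resulting stratum has virtual dimension smaller than $\dim^{\vir}\ov\fM_{g,k}(X,A)$ by an amount that one computes to be nonnegative exactly when $(g,A)\!\ne\!(1,0)$ and $(n\!-\!5)g(g\!-\!1)\!\ge\!0$; for $g\!\le\!1$ the second condition is vacuous, whereas for $g\!\ge\!2$ the full genus can be concentrated in the rubber and the estimate breaks down precisely when $2n\!\le\!8$. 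Carrying out this codimension bookkeeping, and --- more delicately --- verifying that the excess contributions on the two sides of the identity genuinely agree rather than merely each being of too small dimension, is the main obstacle; I expect it to require building the two virtual cycles compatibly from a single perturbation of~$J$.

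Finally, for the strengthened assertion with $\ka\!=\!1$ and $A\!\ne\!0$ one sharpens the estimate on the rubber strata using the constraints cut out by the primary insertions $\ev_i^*\al_i$. Since $A\!\ne\!0$, the level-$0$ part of any map in such a stratum is non-constant with image not contained in $V$, so these constraints are transverse to the rubber directions and improve the codimension bound by one unit --- enough to conclude whenever $g\!=\!2$ or $n\!\ne\!4$. The residual cases --- $(g,A)\!=\!(1,0)$, genus $\ge\!2$ in dimensions $2n\!\le\!8$ for general $\ka$, and $n\!=\!4$ with $g\!\ge\!3$ for primary invariants --- are precisely those illustrated by Examples~\ref{deg0_eg}--\ref{P4_eg}.
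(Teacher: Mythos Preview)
Your outline has the right overall shape, but there are two genuine gaps.

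First, the reduction to the pushforward identity
\[
\pi_*\big[\ov\fM_{g,k;\1_{A\cdot V}}^V(X,A)\big]^{\vir}=(A\!\cdot\!V)!\,\big[\ov\fM_{g,k}(X,A)\big]^{\vir}
\]
is not something you can prove by comparing perfect obstruction theories over the open locus~$\cU$ and then bounding the dimensions of the rubber strata.  A virtual class is a global object; knowing that the two obstruction theories agree on an open substack, and that the complement has small virtual dimension, does not by itself force the pushforward identity.  (In the paper this identity is Corollary~\ref{VC_crl}, and it is obtained only after the numerical statement, via Li's degeneration formula~\eref{VCsplit_e}; it is not used as an input.)  Your own sentence ``verifying that the excess contributions on the two sides of the identity genuinely agree \ldots\ is the main obstacle'' is exactly right: that \emph{is} the content of the theorem, and you have not supplied it.  The paper's proof instead fixes an admissible~$J$ and a generic~$\nu$ satisfying~\eref{nuVrestr_e}, so that the relative invariant is an honest count, and then shows that the extra $(J,\nu)$-maps in the \emph{absolute} moduli space---those with positive-genus components sinking into~$V$---either lie in strata of too-small dimension or contribute zero.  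The direction of the analysis is the reverse of yours: the issue is not rubber in the relative space but degree-$0$ components in~$V$ appearing in the absolute space when~$\nu$ is constrained by~\eref{nuVrestr_e}.

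Second, your treatment of the $\ka\!=\!1$ extension is incorrect.  The claim that the primary insertions ``improve the codimension bound by one unit'' is not what happens, and a uniform dimension argument does not suffice.  The paper handles the cases separately: for $n\!=\!1,2$ the genus-reduced absolute space has strictly smaller virtual dimension, so the bad strata are empty; for $n\!=\!3$ the dimensions are \emph{equal} (see~\eref{n3dims_e}), and the vanishing of the contribution comes from Mumford's relation $\la_{g_v}^2\!=\!0$ on~$\ov\cM_{g_v}$ applied to the euler class of the obstruction bundle $\E^*\!\otimes\!TV$; for $n\!=\!4$, $g\!=\!2$ the genus-$2$ rubber locus in~$V$ is virtually zero-dimensional, and one argues that the genus-$0$ $(X,V)$-side cannot be made to pass through its finitely many image points.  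None of these three mechanisms is a one-unit improvement from the primary insertions, and the middle one (the $\la_g^2$ argument) is not a dimension count at all.  Without it the $n\!=\!3$ case of the last sentence of the theorem is simply missing from your sketch.
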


\begin{rmk}\label{mainthm_rmk}
By Theorem~\ref{main_thm}, the absolute GW-invariants with primary insertions, i.e.~$\ka\!=\!1$,
and the corresponding relative invariants in degree $A\!\neq\!0$ may fail to
be equal only if $n\!=\!4$ and $g\!\ge\!3$ at the same time;
the possibility of such a failure is illustrated by Example~\ref{P4_eg}.
With non-trivial constraints~$\ka$, the two invariants in degree $A\!\neq\!0$ may fail to
be equal only if $1\!\le\!n\!\le\!4$ and $g\!\ge\!2$ at the same time;
the possibility of such a failure is illustrated by Example~\ref{P1_eg}.
Example~\ref{deg0_eg}, which is motivated by \cite[Example~12.5]{IPsum},
illustrates the possibility of failure of~\eref{AReq_e} with $A\!=\!0$.
\end{rmk}

\begin{rmk}\label{AG_rmk}
In Section~\ref{mainpf_sec}, we give two versions of essentially the same proof
of Theorem~\ref{main_thm}.
The~first version is a direct comparison of the two invariants.
It is particularly suitable for considering the independence of
the geometrically constructed curve counts 
of the chosen Donaldson divisor in \cite{CM,G,IPvfc}; see Section~\ref{CM_sec}.
The argument involves several cases; in all, but one of them, 
the conclusion is established by a dimension-counting argument.
In the exceptional case, when $\ka\!=\!1$, $n\!=\!3$, and $g\!\ge\!3$,
we also use the fact that $\la_g^2\!=\!0$ on~$\ov\cM_g$; see \cite[(5.3)]{Mumford}.
The second version of the proof is a formal application of the symplectic sum formula for 
GW-invariants, as in the setup introduced in \cite[Section~2.2]{MP}, 
successfully applied in the genus~0 case in~\cite{HLR},
and used in the attempted proof of \cite[Theorem~11.1]{IPvfc}.
As indicated by~\cite{IPvfc}, establishing~\eref{AReq_e} in this way 
leads to the analogue of~\eref{AReq_e} for virtual classes,
at least in the algebraic category; see Section~\ref{VMext_subs}.
The crucial step in the proof is that we start by taking a generic regularization for
the maps to~$V$, i.e.~a horizontal deformation of the parameters~$(J,\nu)$ along~$V$,
before deforming the parameter~$\nu$ in the normal direction to~$V$. 
The order of the deformations is reversed in \cite[Section~12]{IPvfc},
which makes the horizontal directions not even defined and 
crucially misses out the opportunity to quickly settle most cases
of Theorem~\ref{mainthm_rmk}.
The argument in \cite[Section~11]{IPvfc}
instead misinterprets \cite[(9)]{FP} to arrive at the conclusion of 
Lemma~\ref{relGW_lmm} in Section~\ref{VMext_subs} without the restrictions in~\eref{AReqcond_e2}
and the conclusion of Corollary~\ref{VC_crl} without the restrictions in~\eref{AReqcond_e3}
or the projective assumptions on~$X$ and~$A$.
\end{rmk}

\begin{rmk}\label{IPcond_rmk}
It is sufficient to verify the condition of Definition~\ref{hollow_dfn}
for $J$-holomorphic maps \hbox{$u\!:\Si\!\lra\!V$} that are simple in
the sense of \cite[Section~2.5]{MS2}.
By \cite[Section~3.2]{MS2}, moduli spaces of such maps have the expected dimensions
for a generic $\om_V$-tame (or compatible) almost complex structure~$J_V$ on~$V$.
Thus, by the first equation in~\eref{virdim_e}, $V$ is $(g,A)$-hollow if 
\BE{IPcond_e1}A'\cdot V> \lr{c_1(X),A'}+(n\!-\!4)(1\!-\!g')\EE
for all $g'\!\in\!\Z^{\ge0}$ with $g'\!\le\!g$ and $A'\!\in\!H_2(X;\Z)$ with 
$\om(A')\!\le\!\om(A)$ such that $A'$ can be represented by 
a $J_0$-holomorphic curve for some fixed $\om$-tame almost complex structure~$J_0$ on~$X$. 
By Gromov's Compactness Theorem, the number of such classes $A'$ is finite.
Since $\om(A')\!>\!0$ for all such classes, \eref{IPcond_e1} can be achieved by 
taking~$V$ to be Poincare dual to a sufficiently high multiple of 
a rational symplectic form close to~$\om$.
Such~$V$, called \sf{Donaldson hypersurfaces}, always exist by~\cite{D} and are central to 
the construction of genus~0 curve counts in~\cite{CM} and its attempted extensions
to positive genera in~\cite{G} and~\cite{IPvfc}; see Section~\ref{CM_sec}.
\end{rmk}

\begin{rmk}\label{hollowdfn_rmk}
For the purposes of the direct proof of Theorem~\ref{main_thm} in Section~\ref{DirComp_subs},
it is sufficient to assume that there exist an almost complex structure~$J_V$ on~$V$
and an arbitrarily small perturbation~$\nu$ on~$V$ as in Section~\ref{RelDfn_sec} so 
that every $(J_V,\nu)$-holomorphic map $u\!:\Si\!\lra\!V$ 
from a smooth connected Riemann surface~$\Si$ satisfies 
$$u_*[\Si]=0, \quad\hbox{or}\quad   g(\Si)>g,  \quad\hbox{or}\quad 
\lr{u^*\om,\Si}\!>\!\om(A), \quad\hbox{or}\quad 
\lr{u^*\om,\Si}=\om(A),~u_*[\Si]\!\neq\!A.$$
For the purposes of the proof of Theorem~\ref{main_thm} via the symplectic sum formula
in Section~\ref{SympSum_subs}, it is sufficient to assume the GW-invariants of~$V$
of genus~$g'$ and in the class~$A'$ vanish whenever $A'\!\neq\!0$, $g'\!\le\!g$,
and $\om(A')\!\le\!\om(A)$.  
\end{rmk}

\noindent
The next three examples illustrate different cases when~\eref{AReq_e} fails to hold. 
They are justified in Section~\ref{eg_sec}.

\begin{eg}\label{deg0_eg}
Suppose $(X,\om)$ is a compact symplectic manifold of real dimension~$2n$ and
$V\!\subset\!X$ is a symplectic hypersurface.
Let $\fj\!\in\!H^2(\ov\cM_{1,1})$ be the Poincare dual of a generic point and $\al\!\in\!H^2(X)$.
The genus~1 degree~0 GW-invariants of~$(X,\om)$ and~$(X,\om,V)$ satisfy
\begin{gather}
\label{deg0_e1}
\GW_{1,0}^X(\fj;1)=\frac{\chi(X)}{2}
= \frac{1}{0!}\GW_{1,0;()}^{X;V}(\fj;1)+\frac{\chi(V)}{2}\,,\\
\label{deg0_e2}
\GW_{1,0}^X(\al)=-\frac{\lr{\al\,c_{n-1}(X),X}}{24}= 
\frac{1}{0!}\GW_{1,0;()}^{X,V}(\al)-\frac{\lr{\al|_V\,c_{n-2}(V),V}}{24},
\end{gather}
where $\chi(\cdot)$ is the euler characteristic and $()$ in the subscript is the length~0
contact vector (and thus gives $0!$ in the denominators).
\end{eg}

\begin{eg}\label{P1_eg}
Denote by $\P^1$ the one-dimensional complex projective space with 
the standard symplectic form and by $V_{\de}\!\subset\!\P^1$ 
the symplectic hypersurface consisting of $\de\!\in\!\Z^{\ge0}$ distinct points.
Let $\pt\!\in\!H^2(\P^1)$ be the Poincare dual of a point and 
$\ka\!\in\!H^2(\ov\cM_{2,2})$ be the Poincare dual of the divisor
whose generic element consists of two components, one of genus~2 and the other of genus~0;
see the bottom right diagram in Figure~\ref{st_fig}.
The genus~2 degree~1 GW-invariants of~$\P^1$ and~$(\P^1,V_{\de})$ satisfy
\BE{P1_e}
\frac{1}{240} =\GW_{2,1}^{\P^1}(\ka^4;\pt,\pt)=
\frac{1}{\de!}\GW_{2,1;\1_{\de}}^{\P^1,V_{\de}}(\ka^4;\pt,\pt)+\frac{\de}{1,152}\,.\EE
\end{eg}

\begin{eg}\label{P4_eg}
Denote by $\P^4$ the four-dimensional complex projective space with 
the standard symplectic form and by $V_{\de}\!\subset\!\P^4$ 
a smooth complex hypersurface of degree~$\de$.
Let $\pt\!\in\!H^8(\P^4)$ be the Poincare dual of a point.
The genus~3 degree~1 primary GW-invariants of~$\P^4$ and~$(\P^4,V_{\de})$ satisfy
\BE{P4_e}
 -\frac{37}{82,944}
=\GW_{3,1}^{\P^4}(\pt)=
\frac{1}{\de!}\GW_{3,1;\1_{\de}}^{\P^4,V_{\de}}(\pt)+
\frac{\de(\de^2\!-\!5\de\!+\!8)}{72,576}\,.\EE
\end{eg}

\begin{rmk}\label{P4eg_rmk}
The proof in Section~\ref{P4eg_subs} of the second equality in~\eref{P4_e}  applies to 
primary GW-invariants of~$\P^4$ and~$(\P^4,V_{\de})$ in degree~$d$
as long as $V_{\de}$ contains no curves of genus at most~3 and 
degree at most~$d$ that pass through the constraints. 
In these cases, the last term in~\eref{P4_e} should be multiplied 
by the genus~0 degree~$d$ absolute GW-invariant with an extra point insertion.
The condition on~$V_{\de}$ in particular excludes the $d\!=\!1$ GW-invariants 
with primary insertions~$(\P^1,\P^2)$ if $\de\!=\!1$.
\end{rmk}

\noindent
We review the definitions of absolute and relative invariants in Section~\ref{RelDfn_sec},
focusing on the geometric differences for the requirements on generic parameters~$(J,\nu)$
determining the two types of invariants.
These differences are fundamental to establishing Theorem~\ref{main_thm} in 
Section~\ref{mainpf_sec} and the claims of Examples \ref{deg0_eg}-\ref{P4_eg}
in Section~\ref{eg_sec}.
In Section~\ref{CM_sec}, we review the Cieliebak-Mohnke approach to 
constructing GW-invariants and relate a key issue in this approach 
to Theorem~\ref{main_thm} and Examples \ref{deg0_eg}-\ref{P4_eg}.\\

\noindent
This note was inspired by the discussions regarding \cite[Theorem~11.1]{IPvfc}
and the related aspects of~\cite{G} at and following the SCGP workshop
on constructing the virtual cycle in GW-theory.
We would like to thank the SCGP for organizing and hosting this very 
enlightening workshop and the authors of~\cite{IPvfc} and~\cite{G}
for bringing up important questions concerning relative GW-invariants.
We are also grateful to C.-C.~Liu and D.~Maulik for sharing invaluable insights 
on  \cite[Theorem~11.1]{IPvfc}
and C.~Faber for providing intersection numbers for Deligne-Mumford moduli spaces.

\section{Review of GW-invariants}
\label{RelDfn_sec}

\noindent
Let $g,k\!\in\!\Z^{\ge0}$ be such that $2g\!+\!k\!\ge\!3$, 
\BE{PrymCov_e}\chM_{g,k} \lra \ov\cM_{g,k} \EE
be the branched cover of the Deligne-Mumford space of stable $k$-marked genus~$g$ 
curves by the associated moduli space of Prym structures constructed in~\cite{Lo}, and 
$$\pi_{g,k}\!:\chU_{g,k}\!\lra\!\chM_{g,k}$$
be the corresponding universal curve.
A \textsf{$k$-marked genus~$g$  nodal curve with a Prym structure} is 
a connected compact nodal $k$-marked Riemann surface $(\Si,z_1,\ldots,z_k)$ of arithmetic genus~$g$
together with a holomorphic map $\st_{\Si}\!:\Si\!\lra\!\chU_{g,k}$
which surjects on a fiber of~$\pi_{g,k}$ and takes the marked points of~$\Si$
to the corresponding marked points of the fiber.\\

\noindent
If  $J$ is an almost complex structure on a smooth manifold~$X$,
$A\!\in\!H_2(X;\Z)$, and 
\BE{nucond0_e}\nu \in \Ga_{g,k}(X,J)\equiv 
\Ga\big(\chU_{g,k}\!\times\!X,\pi_1^*(T^*\chU_{g,k})^{0,1}\!\otimes_{\C}\!\pi_2^*(TX,J)\big),\EE
a \textsf{$k$-marked genus~$g$  degree~$A$ $(J,\nu)$-map} is a tuple $(\Si,z_1,\ldots,z_k,\st_{\Si},u)$
such that $(\Si,z_1,\ldots,z_k,\st_{\Si})$ is a genus~$g$ $k$-marked nodal curve with a Prym structure
and $u\!:\Si\!\lra\!X$ is a smooth (or $L^p_1$, with $p\!>2$) map such that 
$$u_*[\Si]=A \qquad\hbox{and}\qquad
\dbar_{J,\fj}u\big|_z\equiv \frac12\big(\nd u+J\circ \nd u\circ\fj\big)=
\nu(\st_{\Si}(z),u(z))\quad\forall\,z\!\in\!\Si,$$
where $\fj$ is the complex structure on~$\Si$.
Two such tuples are \textsf{equivalent} if they differ by a reparametrization
of the domain commuting with the maps to~$\chU_{g,k}$.\\

\noindent
Suppose $(X,\om)$ is a compact symplectic manifold and $J$ is 
an $\om$-tame almost complex structure.
By \cite[Corollary~3.9]{RT2}, the space $\ov\fM_{g,k}(X,A;J,\nu)$ of 
equivalence classes~of $k$-marked  genus~$g$ degree~$A$  $(J,\nu)$-maps
is Hausdorff and compact  in Gromov's convergence topology.
By \cite[Theorem~3.16]{RT2}, for a generic~$\nu$
each stratum of $\ov\fM_{g,k}(X,A;J,\nu)$ consisting of simple (not multiply covered) 
maps of a fixed combinatorial type is a smooth manifold of the expected even dimension, 
which is less than the expected dimension of 
the subspace of simple maps with smooth domains
(except for this subspace itself).
By \cite[Theorem~3.11]{RT2}, the last stratum has a canonical orientation.
By \cite[Proposition~3.21]{RT2}, 
the images of  the strata of $\ov\fM_{g,k}(X,A;J,\nu)$ consisting of multiply covered maps 
under the morphism
\BE{RTcyc_e}
\st\!\times\!\ev_1\!\times\!\ldots\!\times\!\ev_k\!:
\ov\fM_{g,k}(X,A;J,\nu)\lra \ov\cM_{g,k}\!\times\!X^k\EE
are contained in images of maps from smooth even-dimensional manifolds of
dimension less than this stratum
if $\nu$ is generic and $(X,\om)$ is semi-positive in the sense of 
\cite[Definition~6.4.1]{MS2}.
Thus, \eref{RTcyc_e} is a pseudocycle.
Intersecting it with generic representatives for the Poincare duals of
the classes~$\ka$ and~$\al_i$ and dividing by the order of the covering~\eref{PrymCov_e}, 
we obtain the (absolute) GW-invariants~\eref{absGWdfn_e} 
of a semi-positive symplectic manifold $(X,\om)$ in the stable range, 
i.e.~with $(g,k)$ such that $2g\!+\!k\!\ge\!3$.
If $g\!=\!0$, the same reasoning applies with $\nu\!=\!0$ and yields
the same conclusion if $(X,\om)$ satisfies a slightly stronger condition
($c_1(A)\!>\!0$ instead of $c_1(A)\!\ge\!0$ in \cite[Definition~6.4.1]{MS2}).
For general symplectic manifolds~$(X,\om)$, the GW-invariants~\eref{absGWdfn_e} 
are defined in \cite{FO,LT} using Kuranishi structures (or finite-dimensional approximations)
and local perturbations~$\nu$ as in~\eref{nucond0_e}.\\

\noindent
Suppose in addition $V\!\subset\!X$ is a closed symplectic hypersurface and $J(TV)\!=\!TV$.
Thus, $J$ induces a complex structure~$\fI_{X,V}$ on (the fibers~of) 
the normal bundle
$$\pi_{X,V}\!: \cN_XV\equiv TX|_V\big/TV\lra V.$$
A connection~$\na^{\cN_XV}$ in $(\cN_XV,\fI_{X,V})$ induces a splitting of the exact sequence
\BE{NXVsplit_e}\begin{split}
0&\lra \pi_{X,V}^*\cN_XV\lra T(\cN_XV)
\stackrel{\nd\pi_{X,V}}{\lra} \pi_{X,V}^*TV\lra0
\end{split}\EE
of vector bundles over~$\cN_XV$ which restricts to the canonical splitting
over the zero section and is preserved by the multiplication by~$\C^*$;
see \cite[Lemma~1.1]{anal}.
For each trivialization 
$$\cN_XV|_U\approx U\!\times\!\C$$ 
over an open subset~$U$ of~$V$, there exists 
$\al\in \Ga(U;T^*V\!\otimes_{\R}\!\C)$
such that the image of $\pi_{X,V}^*TV$ corresponding to this splitting is given~by
$$T_{(x,w)}^{\hor}(\cN_XV)=\big\{(v,-\al_x(v)w)\!:\,v\!\in\!T_xV\big\}
\qquad\forall~(x,w)\!\in\!U\!\times\!\C.$$
The isomorphism $(x,w)\!\lra\!(x,w^{-1})$ of $U\!\times\!\C^*$ maps this vector space~to
\begin{equation*}\begin{split}
T_{(x,w^{-1})}^{\hor}\big((\cN_XV)^*\big)
&=\big\{(v,w^{-2}\al_x(v)w)\!:\,v\!\in\!T_xV\big\}\\
&=\big\{(v,\al_x(v)w^{-1})\!:\,v\!\in\!T_xV\big\}
\qquad\forall~(x,w)\!\in\!U\!\times\!\C^*.
\end{split}\end{equation*}
Thus, the splitting of \eref{NXVsplit_e} induced by a connection in~$(\cN_XV,\fI_{X,V})$ 
extends to a splitting of the exact sequence 
$$0\lra T^{\vrt}(\P_XV) \lra T(\P_XV)
\stackrel{\nd\pi_{X,V}}{\lra} \pi_{X,V}^*TV\lra0,$$
where  
\BE{PXVdfn_e}\pi_{X,V}\!:\P_XV\equiv \P\big(\cN_XV\oplus V\!\times\!\C\big)  \lra V\,;\EE
this splitting restricts to the canonical splittings over 
\BE{PXVsecdfn_e}\P_{X,\i}V\equiv\P(\cN_XV\!\oplus\!0)
\qquad\hbox{and}\qquad \P_{X,0}V\equiv\P(0 \oplus X\!\times\!\C)\EE
and is preserved by the multiplication by~$\C^*$.
Via this splitting, the almost complex structure $J_V\!\equiv\!J_X|_V$ and 
the complex structure $\fI_{X,V}$ in the fibers of~$\pi_{X,V}$ induce
an almost complex structure~$J_{X,V}$ on~$\P_XV$ which restricts to almost complex
structures on $\P_{X,\i}V$ and~$\P_{X,0}V$
and is preserved by the~$\C^*$-action.
Furthermore, the projection $\pi_{X,V}\!:\P_XV\!\lra\!V$ is $(J_V,J_{X,V})$-holomorphic.
By \cite[Lemma~2.2]{anal}, $\xi\!\in\!\Ga(V,\cN_XV)$ is 
$(J_{X,V},J|_V)$-holomorphic if and only if~$\xi$ lies in the kernel of 
the $\dbar$-operator on~$(\cN_XV,\fI_{X,V})$ corresponding to the connection used above.\\

\noindent
For each $m\!\in\!\Z^{\ge0}$, let 
\begin{gather}\label{XmVdfn_e} 
X_m^V=\big(X\sqcup\{1\}\!\times\!\P_XV\sqcup\ldots\sqcup
\{m\}\!\times\!\P_XV\big)/\!\!\sim\,,\qquad\hbox{where}\\
\notag
x\sim 1\!\times\!\P_{X,\i}V|_x\,,~~~
r\!\times\!\P_{X,0}V|_x\sim (r\!+\!1)\!\times\!\P_{X,\i}V|_x
\quad  \forall\,x\!\in\!V,~r=1,\ldots,m\!-\!1;
\end{gather}
see Figure~\ref{relcurve_fig}.
Define
$$q_m\!:X_m^V\lra X \qquad\hbox{by}\qquad
q_m(x)=\begin{cases}x,&\hbox{if}~x\!\in\!X;\\
\pi_{X,V}([v,w]),&\hbox{if}~x\!=\!(r,[v,w])\!\in\!r\!\times\!\P_XV\,.
\end{cases}$$
We denote by $J_m$ the almost complex structure on~$X_m^V$ so that 
$$J_m|_X=J \qquad\hbox{and}\qquad 
J_m|_{\{r\}\times\P_XV}=J_{X,V} \quad \forall~r=1,\ldots,m.$$
For each $(c_1,\ldots,c_m)\!\in\!\C^*$, define
\BE{Thdfn_e}\Th_{c_1,\ldots,c_m}\!:X_m^V\lra X_m^V \qquad\hbox{by}\quad
\Th_{c_1,\ldots,c_m}(x)=\begin{cases}x,&\hbox{if}~x\!\in\!X;\\
(r,[c_rv,w]),&\hbox{if}~x\!=\!(r,[v,w])\!\in\!r\!\times\!\P_XV.
\end{cases}\EE
This diffeomorphism is biholomorphic with respect to~$J_m$ and
preserves the fibers of the projection $\P_XV\!\lra\!V$
and the sections~$\P_{X,0}V$ and~$\P_{X,\i}V$.\\

\noindent
Suppose $J(TV)\!=\!TV$ and $J$ is $\om$-tame.
We denote by~$\na$ the Levi-Civita connection of the metric~$g_J$ on~$X$
determined by $(\om,J)$ as in \cite[(2.1.1)]{MS2},
by~$\wt\na$ the corresponding $J_X$-linear connection, as above \cite[(3.1.3)]{MS2},
and by~$\wh\na$ the connection given~by
$$\wh\na_v\ze = \wt\na_v\ze - \frac14\big\{ \na_{J\ze}J+J\na_{\ze}J\big\}(v)
\qquad\forall~\ze\in\Ga(X;TX),~v\in TX.$$
By the next paragraph, the $\dbar$-operator
$$\wh\na^{0,1}\!: \Ga(X;TX)\lra \Ga\big(X;T^*X^{0,1}\!\otimes_{\C}\!TX\big),
\qquad \ze\lra \frac12 \big(\na_{\cdot}\ze+J\na_{J\cdot}\ze\big)$$
restricts to an operator
$$\wh\na^{0,1}\!: \Ga(V;TV)\lra \Ga\big(V;T^*V^{0,1}\!\otimes_{\C}\!TV\big),$$
and thus descends to a $\dbar$-operator 
$$\Ga(V;\cN_XV)\lra \Ga\big(V;T^*V^{0,1}\!\otimes_{\C}\!\cN_XV\big)$$
corresponding to some connection 
$\na^{\cN_XV}$ in $(\cN_XV,\fI_{X,V})$; see \cite[Section~2.3]{anal}.
Let $J_{X,V}$ denote the complex structure on~$\P_XV$ induced by~$J_V$
and~$\na^{\cN_XV}$ as  in the paragraph above the previous one;
it depends only on the above $\dbar$-operator and not on 
the connection~$\na^{\cN_XV}$ realizing~it.\\

\noindent
If in addition $u\!:(\Si,\fj)\!\lra\!(X,J)$ is $(J,\fj)$-holomorphic, 
i.e.~$\dbar_{J,\fj}u\!=\!0$, the linearization of the $\dbar_{J,\fj}$-operator at~$u$ 
is given~by
\begin{gather}
D_u\!: \Ga(\Si,u^*TX)\lra \Ga^{0,1}_{J,\fj}(\Si;u^*TX)
\equiv\Ga\big(\Si,(T^*\Si,j)^{0,1}\!\otimes_{\C}\!u^*(TX,J)\big)
,\notag\\
\label{Dudfn_e}
D_u\xi=\frac12\big(\wh\na^u\xi+\{u^*J\}\circ\wh\na^u\xi\circ\fj)
+\frac14N_J^u(\xi,\nd u),
\end{gather}
where $\wh\na^u$ and $N_J^u$ are the pull-backs of the connection~$\wh\na$  
and of the Nijenhuis tensor~$N_J$ of~$J$ normalized as in \cite[p18]{MS2},
respectively, by~$u$; see \cite[(3.1.7)]{MS2}.
If in addition $u(\Si)\!\subset\!V$, 
$$D_u\big(\Ga(\Si,u^*TV)\big)\subset \Ga^{0,1}_{J,\fj}(\Si,u^*TV),$$
because the restriction of $D_u$ to $\Ga(\Si;u^*TV)$ is the linearization
of the $\dbar_{J,\fj}$-operator at~$u$  for the space of maps to~$V$.
Thus, $D_u$ descends to a first-order differential operator
\BE{DuNXV_e} D_u^{\cN_XV}\!: \Ga(\Si,u^*\cN_XV)\lra \Ga^{0,1}_{J,\fj}(\Si,u^*\cN_XV).\EE
By~\eref{Dudfn_e}, this operator is $\C$-linear if 
\BE{NijenCond_e}N_J(v,w)\in T_xV \qquad\forall~v,w\!\in\!T_xX,~x\!\in\!V.\EE
Under this assumption, $\xi\!\in\!\Ga(\Si,u^*\cN_XV)$ is a $(J_{X,V},\fj)$-holomorphic map
if and only if  \hbox{$\xi\!\in\!\ker D_u^{\cN_XV}$}.\\

\begin{figure}
\begin{pspicture}(38,-.8)(11,3.2)
\psset{unit=.3cm}
\psline[linewidth=.1](48,-2)(80,-2)
\rput(51,6.8){\sm{$V$}}\rput(51,5.2){\sm{$\P_{X,\i}V$}}
\rput(51,2.8){\sm{$\P_{X,0}V$}}\rput(51,1.2){\sm{$\P_{X,\i}V$}}
\rput(51,-1.2){\sm{$\P_{X,0}V$}}
\rput(46,8){$X$}\rput(46,4){$1\!\times\!\P_XV$}\rput(46,0){$2\!\times\!\P_XV$}
\psarc[linewidth=.04](60,9){3}{180}{0}\pscircle*(60,6){.3}
\psarc[linewidth=.04](64,9){1}{0}{180}
\psarc[linewidth=.04](68,9){3}{180}{0}\pscircle*(68,6){.3}
\psarc[linewidth=.04](60,-1){3}{0}{180}
\psarc[linewidth=.04](68,-1){3}{0}{180}
\psarc[linewidth=.04](64,-1){1}{180}{0}
\psarc[linewidth=.04](56,-1){1}{180}{0}
\psline[linewidth=.1](48,2)(80,2)
\pscircle*(57,-1){.3}\rput(57.9,-.8){\sm{$z_1$}}
\pscircle*(60,2){.3}\rput(56,-2.8){\sm{$z_2$}}
\pscircle*(64,-2){.3}\rput(64,-2.8){\sm{$z_3$}}
\pscircle*(68,2){.3}
\pscircle[linewidth=.04](60,4){2}
\psarc[linewidth=.04](68,4){2}{90}{270}\psarc[linewidth=.04](68,3){3}{30}{90}
\psarc[linewidth=.04](68,3){1}{270}{0}\psarc[linewidth=.04](70,3){1}{0}{180}
\psarc[linewidth=.04](72,-1){1}{180}{0}
\psline[linewidth=.1](48,6)(80,6)
\pscircle*(56,-2){.3}\pscircle*(72,-2){.3}
\rput(72,-2.8){\sm{$z_4$}}
\end{pspicture}
\caption{The image of a relative map with $k\!=\!1$ and $\s\!=\!(2,2,2)$
to the space $X_2^V$.}
\label{relcurve_fig}
\end{figure}
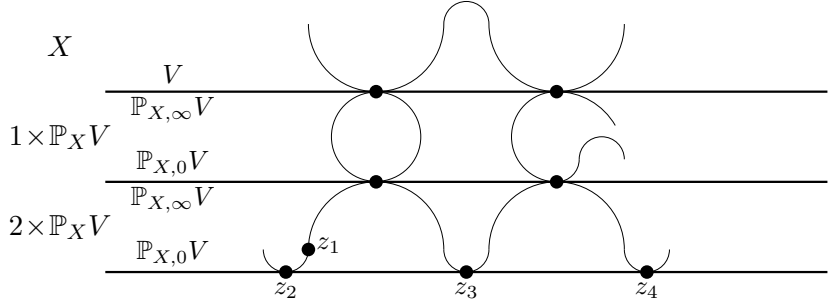

\noindent
If $J(TV)\!\subset\!V$, $\Si$ is a smooth connected Riemann surface, and
$u\!:\Si\!\lra\!X$ is a $J$-holomorphic map such that $u(\Si)\!\not\subset\!V$,
then $u^{-1}(V)$ is an isolated set of points~$z_i$;
see the beginning of \cite[Section~5.1]{SympSum}.
Furthermore, $u$ has a well-defined order of contact with~$V$ at
each $z_i\!\in\!u^{-1}(V)$, $\ord_{z_i}^Vu\!\in\!\Z^+$;
if $\Si$ is compact, 
$$\sum_{z_i\in u^{-1}(V)}\!\!\!\!\!\!\ord_{z_i}^Vu = u_*[\Si]\cdot V\,.$$  
If $A\!\in\!H_2(X;\Z)$, $g,k,\ell\!\in\!\Z^{\ge0}$, and
$\s\!=\!(s_1,\ldots,s_{\ell})\!\in\!(\Z^+)^{\ell}$  is a tuple satisfying~\eref{bsumcond_e},
let
\BE{relmoddfn_e}\fM_{g,k;\s}^V(X,A)\subset \ov\fM_{g,k+\ell}(X,A)\EE
denote the subset of equivalence of stable $J$-holomorphic maps~$u$ from 
marked genus~$g$  nodal curves $(\Si,z_1,\ldots,z_{k+\ell})$ such that 
$$u^{-1}(V)=\big\{z_{k+1},\ldots,z_{k+\ell}\big\}\
\qquad \hbox{and}\qquad 
\ord_{z_{k+i}}^Vu=s_i\quad\forall\,i=1,\ldots,\ell.$$
If $J$ satisfies~\eref{NijenCond_e}, we denote by
\BE{relmoddfn_e2}\ov\fM_{g,k;\s}^V(X,A)\supset \fM_{g,k;\s}^V(X,A)\EE
the space of equivalence classes of stable $J_{X,V}$-holomorphic maps $u\!:\Si\!\lra\!X_m^V$,
with $m\!\in\!\Z^{\ge0}$, from connected marked genus~$g$ nodal curves $(\Si,z_1,\ldots,z_{k+\ell})$
such~that the restriction of~$u$ to each irreducible component of~$X_m^V$ is contained 
in either~$X$ or in $\{r\}\!\times\!\P_XV$ for some $r\!=\!1,\ldots,m$,
but not in~$V$ or $\{r\}\!\times\!\P_{X,0}V$ for any~$r$,
$$q_{m*}u_*[\Si]=A,  \qquad \ord_{z_{k+i}}^{\{m\}\times\P_{X,0}V}u=s_i
\quad\forall~i=1,\ldots,\ell,$$
and the orders of contacts of the two branches at each node on
$V$, $\{r\}\times\P_{X,0}V$, or $\{r\}\times\P_{X,0}V$ agree;
see Figure~\ref{relcurve_fig}.
Two maps~$u$ as above are equivalent if they different by an isomorphism of marked domains
and a composition with an isomorphism~\eref{Thdfn_e};
see \cite[Section~4.2]{SympSum} for more details.\\

\noindent
The relative moduli spaces $\ov\fM_{g,k;\s}^V(X,A)$ are introduced in~\cite{LR} 
in a somewhat different formulation and under a stronger assumption on~$J$ 
than~\eref{NijenCond_e}, which essentially requires it to be given 
via the Symplectic Neighborhood Theorem \cite[Theorem~3.30]{MS1}
and makes the setup very amenable for the gluing needed to construct 
a virtual class.
In~\cite{IPrel}, the relative moduli spaces are re-introduced,
again in a somewhat different formulation from the previous paragraph,
with $\om$-compatible~$J$ satisfying~\eref{NijenCond_e}. 
The relative non-amenability of this setup with the gluing is not material
in cases when the relative invariants~\eref{relGWdfn_e} can be defined 
geometrically, as in the next paragraph.
By \cite[Section~3.2]{LR} and  \cite[Section~6]{IPrel}, the spaces 
$\ov\fM_{g,k;\s}^V(X,A)$ are compact; they are also Hausdorff.\\

\noindent
With notation as in~\eref{nucond0_e} and $J$ as in the previous two paragraphs, let
$$\Ga_{g,k}^V(X,J)\subset \Ga_{g,k}(X,J)$$
denote the subspace of elements~$\nu$ such~that 
\BE{nuVrestr_e}\nu|_{\wc\cU_{g,k}\times V}\in  \Ga_{g,k}(V,J|_V),
\quad
\wt\na_w\nu+J\wt\na_{Jw}\nu\in (T^*\wc\cU_{g,k})^{0,1}\!\otimes_{\C}\!T_xV 
\quad\forall~w\!\in\!T_xX,~x\!\in\!V.\EE
The first condition in~\eref{nuVrestr_e} insures that every $(J,\nu)$-holomorphic map
$u\!:\Si\!\lra\!X$ has well-defined order of contact with~$V$
at all points of~$u^{-1}(V)$ not contained
in an irreducible component of~$\Si$ mapped into~$V$.
The second condition in~\eref{nuVrestr_e} implies that the linearization 
of the $\dbar_{J,\fj}\!-\!\nu$ operator at $u\!:\Si\!\lra\!V$ induces a $\C$-linear map 
$$D_u^{\cN_XV}\!: \Ga(\Si,u^*\cN_XV)\lra \Ga^{0,1}_{J,\fj}(\Si,u^*\cN_XV)$$
for every $(J,\nu)$-holomorphic map $u\!:\Si\!\lra\!V$.
The moduli spaces
$$\fM_{g,k;\s}^V(X,A;J,\nu)\subset \ov\fM_{g,k;\s}^V(X,A;J,\nu)$$
can then be defined analogously to~\eref{relmoddfn_e} and~\eref{relmoddfn_e2}.
The component maps into the rubber layers $\{r\}\!\times\!\P_XV$ are then 
$(J_{X,V},\nu')$-holomorphic, with
\begin{gather*}
\nu'\in\Ga_{g',k'}(\P_XV,J), \\
\{\nu'|_w\}(v)=\big(\{\wt\na_w\nu\}(v),\nu(v)\big)\in
T_w^{\vrt}\cN_XV\oplus T_w^{\hor}\cN_XV
\qquad\forall~w\in\cN_XV,~v\in T\wc\cU_{g',k'}\,.
\end{gather*}
By the same reasoning as for~$J_{X,V}$, $\nu'$ given by the second line above 
extends over~$\P_{X,\i}V$, is $\C^*$-equivariant,
and satisfies~\eref{nuVrestr_e} with $(X,V)$ replaced by $(\P_XV,\P_{X,0}V)$
and $(\P_XV,\P_{X,\i}V)$.\\

\noindent
By \cite[Proposition~7.3]{IPrel}, the space $\ov\fM_{g,k;\s}^V(X,A;J,\nu)$ is compact.
By \cite[Lemma~7.5]{IPrel}, if $\nu$ is generic each stratum of $\ov\fM_{g,k;\s}^V(X,A;J,\nu)$
consisting of simple maps of a fixed combinatorial type is a smooth manifold of 
the expected even dimension, which is less than the expected dimension of 
the subspace of simple maps with smooth domains (except for this subspace itself).
By \cite[Theorem~7.4]{IPrel}, the last stratum has a canonical orientation.
As explained in \cite[Section~4.3]{SympSum}, the images of the strata of 
$\ov\fM_{g,k;\s}^V(X,A)$ consisting of multiply covered maps
under the morphism
\BE{IPcyc_e}
\st\!\times\!\ev_1\!\ldots\!\times\!\ev_k\!\times\!\ev_{k+1}\!\ldots\!\times\!\ev_{k+\ell}\!:
\ov\fM_{g,k;\s}^V(X,A;J,\nu)\lra \ov\cM_{g,k+\ell}\!\times\!X^k\!\times\! V^{\ell}\EE
are contained in images of maps from smooth even-dimensional manifolds of
dimension less than the main stratum
if $\nu$ is generic, subject to the conditions~\eref{NijenCond_e} and~\eref{nuVrestr_e}, 
$(V,\om|_V)$ is semi-positive, and $(X,\om,V)$ is semi-positive in the sense 
of \cite[Definition~4.7(1)]{SympSum}.
Such strata do not even exist if the domains of all elements of
$\ov\fM_{g,k;\s}^V(X,A)$ possibly contributing to 
the number~\eref{relGWdfn_e} are stable for some~$J$,
as happens in Section~\ref{CM_sec}.
By the proof of \cite[Proposition~8.2]{IPvfc}, all relevant domains are stable
for a generic~$J$ if
\BE{IPcond_e2} A'\cdot V\ge \lr{c_1(X),A'}+\frac12\dim_{\R}X+2g\EE
for all $A'\!\in\!H_2(X;\Z)$ with 
$\om(A')\!\le\!\om(A)$ such that $A'$ can be represented by 
a $J$-holomorphic curve. 
In the above cases, \eref{IPcyc_e} is thus a pseudocycle.
Intersecting it with generic representatives for the Poincare duals of
the cohomology classes~$\ka$ on~$\ov\cM_{g,k+\ell}$,
$\al_1,\ldots,\al_k$ on~$X$, and~$\al_{k+1},\ldots,\al_{k+\ell}$ on~$V$ 
and dividing by the order of the covering~\eref{PrymCov_e}, 
we obtain the relative GW-invariant
$$\GW_{g,k;\s}^{X,V}\big(\ka;\al_1,\ldots,\al_k;\al_{k+1},\ldots,\al_{k+\ell}\big)
=\GW_{g,k;\s}^{X,V}\big(\ka;\al_1\!\otimes\!\ldots\!\otimes\!\al_k;
\al_{k+1}\!\otimes\!\ldots\!\otimes\!\al_{k+\ell}\big).$$
The relative GW-invariant~\eref{relGWdfn_e} is the above invariant with~$\ka$
pulled back from $\ov\cM_{g,k}$ by the forgetful morphism from $\ov\cM_{g,k+\ell}$
and $\al_{k+i}\!=\!1$ for all $i\!=\!1,\ldots,\ell$. 
If $g\!=\!0$, the same reasoning applies with $\nu\!=\!0$ and yields the same conclusion
if $(X,\om,V)$ satisfies the slightly stronger condition of 
\cite[Definition~4.7(2)]{SympSum}.
For general triple~$(X,\om,V)$, the relative GW-invariants~\eref{relGWdfn_e} 
are defined similarly to \cite{FO,LT} using Kuranishi structures (or finite-dimensional approximations)
and local perturbations~$\nu$ as in~\eref{nuVrestr_e}.

\section{Proof of Theorem~\ref{main_thm}}
\label{mainpf_sec}

\noindent
A generic $(J,\nu)$-holomorphic map contributing to the absolute 
GW-invariant~\eref{absGWdfn_e} has intersection number $A\!\cdot\!V$ with~$V$.
One would thus expect it to meet~$V$ at $A\!\cdot\!V$ distinct points.
The different orderings of these points would ideally give rise to $(A\!\cdot\!V)!$ 
distinct relative maps contributing to the relative GW-invariant~\eref{relGWdfn_e}.
However, a regular pair~$(J,\nu)$ determining the number~\eref{absGWdfn_e}
may not satisfy the conditions~\eref{NijenCond_e} and~\eref{nuVrestr_e} 
required of the pairs~$(J,\nu)$ determining the number~\eref{relGWdfn_e},
while a generic pair satisfying~\eref{NijenCond_e} and~\eref{nuVrestr_e}
may not be regular for the purposes of determining the number~\eref{absGWdfn_e}. 
Thus, there is no \`a priori reason for the identity~\eref{AReq_e} to hold in general.
Below we give two versions of nearly the same proof of Theorem~\ref{main_thm}:
first by a direct comparison and then by formally applying the symplectic sum formula.

\subsection{By direct comparison}
\label{DirComp_subs}

\noindent
The restriction~\eref{NijenCond_e} on~$J$ (or even the stronger one in~\cite{LR})
is not material, as we can simply fix one admissible~$J$ and then choose a suitable~$\nu$
to compute the GW-invariants~\eref{absGWdfn_e} and~\eref{relGWdfn_e}.
We~start by choosing a generic $\nu|_V\!\in\!\Ga_{g',k'}(V,J)$ 
and then extend it to~$X$ so that it satisfies the second condition in~\eref{nuVrestr_e}.
A generic such extension~$\nu$ determines the {\it relative} GW-invariant~\eref{relGWdfn_e}.
It counts the $(J,\nu)$-maps that pass through generic representatives of the Poincare
duals of~$\ka$ and~$\al_i$ have images in~$X$ with no components mapped into~$V$.
Dropping the contact marked points, we obtain a regular element of $\ov\fM_{g,k}(X,A;J,\nu)$
which contributes to the {\it absolute} GW-invariant~\eref{absGWdfn_e}.
However, because $\nu$ may not be generic as far as the absolute invariants are concerned,
$\ov\fM_{g,k}(X,A;J,\nu)$ may contain other elements~$u$ which meet generic representatives
of the Poincare duals of~$\ka$ and~$\al_i$.
Any such~$u$ must have at least some components mapped into~$V$,
as all other components can be regularized with~$\nu$ subject to 
the condition~\eref{nuVrestr_e}.\\

\noindent
Spaces~$\fM_{\Ga}$ of maps as at the end of the previous paragraph
can be represented by decorated connected bipartite graphs~$\Ga$ with vertices~$v$ 
\begin{enumerate}[label=$\bullet$,leftmargin=*]
\item alternating between those representing the topological components~$\Si_v$
of the domain of the maps into~$V$ and into~$X$ (without being contained in~$V$),
\item labeled by pairs indicating the genus~$g_v$ of and the degree~$A_v$ of the map on~$\Si_v$,
 and
\item decorated by disjoint subsets of $\{1,\ldots,k\}$, indicating the marked points
carried by~$\Si_v$;
\end{enumerate}
see Figure~\ref{bigraph_fig}.
Since $\fM_{\Ga}$ is contained in $\ov\fM_{g,k}(X,A;J,\nu)$,
$$g_{\Ga}+\sum_{v\in\Ga}g_v=g, \qquad \sum_{v\in\Ga}A_v=A\in H_2(X;\Z)\,,
\quad\hbox{and}\quad \sum_{v\in\Ga}k_v=k,$$
where $v\!\in\!\Ga$ means that $v$ is a vertex in~$\Ga$, $g_{\Ga}$ is the genus 
of the graph~$\Ga$  (number of edges minus the number vertices plus~1),
and $k_v$ is the number of original marked points attached to a vertex $v\!\in\!\Ga$
(the number of the original marked points carried by the topological component~$\Si_v$
of~$\Si$).
We~denote by~$\Ga_V$ the set of vertices of~$\Ga$ corresponding to the components mapped into~$V$
and by $\Ga_X$ the set of remaining vertices.
For each $v\!\in\!\Ga$, let $\ell_v\!\in\!\Z^{\ge0}$ denote the number of edges
leaving~$v$ (the number of nodes joining~$\Si_v$ to other topological components of~$\Si$).
The stability condition on the elements of  $\ov\fM_{g,k}(X,A)$ implies that
$k_v\!+\!\ell_v\!\ge\!3$   for each $v\!\in\!\Ga$ with $(g_v,A_v)\!=\!(0,0)$.\\

\noindent 
If the domains of all relevant elements of $\ov\fM_{g,k}(X,A)$ are stable, 
as is the case in Section~\ref{CM_sec}, 
the above perturbations~$\nu$ can be chosen globally as elements of $\Ga_{g,k}^V(X,J)$.
Otherwise, the same general principle applies by using compatible Kuranishi structures
for maps to~$X$ and to~$V$.
Theorem~\ref{main_thm} is established by showing that  the subspace 
$$\fM_{\Ga}(\ka;\al)\subset \fM_{\Ga}\subset \ov\fM_{g,k}(X,A;J,\nu)$$
of the elements that are of type~$\Ga$ and meet generic representatives
of the Poincare duals of~$\ka$ and~$\al$ is empty for a generic~$\nu$ satisfying
\eref{nuVrestr_e} unless~$\Ga$ is 
the one-vertex graph of maps to~$X$, as in the first diagram in Figure~\ref{bigraph_fig}.
We can assume that~$\ka$ and~$\al$ satisfy~\eref{AbsCond_e}.\\

\begin{figure}
\begin{pspicture}(-.2,1.3)(11,4.2)
\psset{unit=.3cm}
\psline[linewidth=.1](10,6)(22,6)\rput(9.5,8.5){$X$}\rput(9.5,6){\sm{$V$}}
\pscircle*(16,9){.25}
\psline[linewidth=.05](16,9)(15,10.5)\psline[linewidth=.05](16,9)(17,10.5)
\rput(15,11.2){\sm{1}}\rput(17,11.2){\sm{2}}
\rput(17.8,8.5){\sm{$(g,A)$}}
\psline[linewidth=.1](32,6)(44,6)\rput(31.5,8.5){$X$}\rput(31.5,6){\sm{$V$}}
\pscircle*(35,6){.25}\pscircle*(39,6){.25}
\pscircle*(35,9){.25}\pscircle*(41.5,9){.25}\pscircle*(39,9){.25}
\psline[linewidth=.05](39,6)(35,9)
\psline[linewidth=.05](39,6)(39,9)\psline[linewidth=.05](39,6)(41.5,9)
\psarc[linewidth=.05](33,7.5){2.5}{-36.9}{36.9}\psarc[linewidth=.05](37,7.5){2.5}{143.1}{216.0}
\psline[linewidth=.05](41.5,9)(40.5,10.5)\psline[linewidth=.05](41.5,9)(42.5,10.5)
\rput(40.5,11.2){\sm{1}}\rput(42.5,11.2){\sm{2}}
\rput(35,5){\sm{$(g_4,A_4)$}}\rput(39.5,5){\sm{$(g_5,A_5)$}}
\rput(34.5,10){\sm{$(g_1,A_1)$}}\rput(43.7,8.5){\sm{$(g_3,A_3)$}}
\rput(37,12){\rnode{A}{\sm{$(g_2,A_2)$}}}\pnode(39,9.3){B}
\nccurve[angleA=0,angleB=90,linewidth=.03]{->}{A}{B}
\end{pspicture}
\caption{Bipartite graphs~$\Ga$ representing elements of $\ov\fM_{g,2}(X,A;J,\nu)$.}
\label{bigraph_fig}
\end{figure}

\noindent
Since $V$ is assumed to be $(g,A)$-hollow in Theorem~\ref{main_thm},
we can use the Symplectic Neighborhood Theorem \cite[Theorem~3.30]{MS1}
to choose an $\om$-tame almost complex structure~$J$ on~$X$
so that $J(TV)\!\subset\!TV$,  $J_V\!\equiv\!J|_V$ 
satisfies the conditions of Definition~\ref{hollow_dfn},
and $J$ satisfies~\eref{NijenCond_e}.
Thus, the degree~$A_v$ of the restriction of any element of~$\fM_{\Ga}$
to a topological component~$\Si_v$ of the domain mapped into~$V$ is zero.
If the genus~$g_v$ of such~$\Si_v$ is zero, the restriction of any element~$u$ 
of~$\fM_{\Ga}$ to~$\Si_v$ is regular as a map into~$X$ and stays so after
a small generic deformation~$\nu$ as in the previous paragraph.
If $g_v\!=\!0$ for all $v\!\in\!\Ga_V$, $\fM_{\Ga}$ consists of regular maps into~$X$
for a generic~$\nu$ satisfying~\eref{nuVrestr_e} and thus has the expected dimension.
Since this dimension is smaller than the virtual dimension of $\ov\fM_{g,k}(X,A)$,
unless $\Ga_V\!=\!\eset$, $\fM_{\Ga}(\ka;\al)\!=\!\eset$.
In particular, if $g\!=\!0$, all $(J,\nu)$-maps for a generic~$\nu$ satisfying~\eref{nuVrestr_e}
are regular as maps to~$X$ and transverse to~$V$.
Thus, the sets of stable maps contributing to the numbers on the two sides
of~\eref{AReq_e} are the same in this case,
up to the orderings of the $A\!\cdot\!V$ intersection points with~$V$.
This establishes the $g\!=\!0$ case of~\eref{AReq_e}.\\

\noindent
If $n\!\ge\!5$, 
$$\dim^{\vir}\ov\fM_{g',0}(V,0)=2(n\!-\!4)(1\!-\!g')<0 \qquad\forall~g'\ge2.$$
In these cases, we can choose deformations~$\nu$ satisfying~\eref{nuVrestr_e} 
so that $\fM_{\Ga}\!=\!\eset$ if $g_v\!\ge\!2$ for any $v\!\in\!\Ga_V$.
For the purposes of establishing the $g\!\ge\!1$ cases of~\eref{AReq_e},
it thus remains to consider the spaces~$\fM_{\Ga}$ 
so that $g_v\!\in\!\{0,1\}$ for all $v\!\in\!\Ga_V$.
Denote by $\Ga_{v;1}\!\subset\!\Ga_V$ the subset of vertices so that $g_v\!=\!1$.
In the next paragraph, we show that 
\BE{g1loss_e}\dim\fM_{\Ga}\le \dim^{\vir}\ov\fM_{g,k}(X,A)-
2\!\!\!\sum_{v\in\Ga_{v;1}}\!\!\!\ell_v\,\EE
for a generic~$\nu$ satisfying~\eref{nuVrestr_e},  
if either $n\!\ge\!5$ or $g_v\!\le\!1$ for all $v\!\in\!\Ga_V$
(in particular, if $g\!=\!1$).
Thus, $\fM_{\Ga}(\ka;\al)\!=\!\eset$ in these cases if $\Ga$ is not the basic one-vertex
graph as in the first diagram in Figure~\ref{bigraph_fig},
and so~\eref{AReq_e} again holds.\\

\noindent
Removing the vertices of~$\Ga_{v;1}$ from~$\Ga$ and replacing the edges leading to them
by the marked points on the remaining vertices,
we obtain graphs $\Ga_i$, with $i\!=\!1,\ldots,N$ for some $N\!\in\!\Z^+$,
representing subspaces~$\fM_{\Ga_i}$ of the moduli spaces $\ov\fM_{g_i,k_i+\ell_i}(X,A_i)$
with
$$\sum_{i=1}^N(g_i\!-\!1)+\sum_{v\in\Ga_{v;1}}\!\!\!\ell_v=g\!-\!1, \quad
\sum_{i=1}^NA_i=A,\quad
\sum_{i=1}^Nk_i+\sum_{v\in\Ga_{v;1}}\!\!\!k_v =k\,,\quad
\sum_{i=1}^N\ell_i=\!\sum_{v\in\Ga_{v;1}}\!\!\!\ell_v\,,$$
where $k_i\!\in\!\Z^{\ge0}$ is the number of the original marked points 
carried by the component~$\Ga_i$.
The moduli spaces $\ov\fM_{1,k_v+\ell_v}(V,0;J,\nu)$ corresponding to $v\!\in\!\Ga_{v;1}$
are of dimension $2(k_v\!+\!\ell_v)\!\in\!\Z^+$ for a generic choice of~$\nu|_V$.
Since~$\fM_{\Ga_i}$ contains no component of positive genus mapped into~$V$,
it has the expected dimension for a generic extension of~$\nu|_V$ 
satisfying~\eref{nuVrestr_e}.
Taking into account the matching conditions at the nodes joining elements of $\fM_{\Ga_i}$
to elements of $\ov\fM_{1,k_v+\ell_v}(V,0;J,\nu)$, we find~that 
\begin{equation*}\begin{split}
\dim\fM_{\Ga}&\le \sum_{i=1}^N\!\dim\fM_{\Ga_i}
+\sum_{v\in\Ga_{v;1}}\!\!\!\dim\ov\fM_{1,k_v+\ell_v}(V,0;J,\nu)
-2n\!\!\sum_{v\in\Ga_{v;1}}\!\!\!\ell_v\\
&\le  2\sum_{i=1}^N\!\big(\lr{c_1(X),A_i}\!+\!(n\!-\!3)(1\!-\!g_i)\!+\!k_i\!+\!\ell_i\big)
+2\!\!\sum_{v\in\Ga_{v;1}}\!\!\!(k_v\!+\!\ell_v)
-2n\!\!\sum_{v\in\Ga_{v;1}}\!\!\!\ell_v\\
&=2\big(\lr{c_1(X),A}\!+\!(n\!-\!3)(1\!-\!g)\!+\!k\big)
+2(n\!-\!3\!+\!1\!+\!1\!-\!n)\!\!\sum_{v\in\Ga_{v;1}}\!\!\!\ell_v\,.
\end{split}\end{equation*}
Along with the first equation in~\eref{virdim_e},
this establishes~\eref{g1loss_e} and concludes 
the proof of the first claim of Theorem~\ref{main_thm}.

\begin{rmk}\label{deg0g1_rmk}
A regular genus~1 degree~0 $(J,\nu)$-map into $V$ may not be regular as a $(J,\nu)$-map into~$X$.
However, the space of such maps has the expected dimension for the target~$X$
because  this dimension is {\it the same} as the expected dimension for the target~$V$
in the $g\!=\!1$ case.
Thus, a boundary stratum of $(J,\nu)$-maps with only $g\!=\!0,1$ components contained in~$V$
is of smaller dimension than the main stratum of maps into~$X$.
However, the space of $(J,\nu)$-maps from smooth genus~1 domains into~$V$
has the same dimension as the main stratum;
this is precisely what makes Example~\ref{deg0_eg} possible.
\end{rmk}

\noindent
Suppose next that $\ka\!=\!1$ and $g\!\ge\!2$ in~\eref{AReq_e}, i.e.~only 
the primary insertions are considered.
Given a bipartite graph~$\Ga$ describing a subspace~$\fM_{\Ga}$
of $\ov\fM_{g,k}(X,A)$ as in Figure~\ref{bigraph_fig}, 
let $\Ga_0$ be the decorated bipartite graph obtained by replacing
the genus labels  of all vertices $v\!\in\!\Ga_V$ with~0.
Thus, $\fM_{\Ga_0}$ is a subspace of $\ov\fM_{g_0,k}(X,A)$ for some $g_0\!<\!g$,
unless $g_v\!=\!0$ for all $v\!\in\!\Ga_V$ (in which case $\Ga_0\!=\!\Ga$ and thus $g_0\!=\!g$).
If $n\!=\!1,2$ and $g_0\!<\!g$,
$$\dim^{\vir}\ov\fM_{g_0,k}(X,A)<\dim^{\vir}\ov\fM_{g,k}(X,A)$$
by the first equation in~\eref{virdim_e}.
Thus, for a generic $\nu\!\in\!\Ga_{g_0,k}^V(X,J)$, 
$\fM_{\Ga_0}(1;\al)\!=\!\eset$ in this case, and so
$\nu\!\in\!\Ga_{g,k}^V(X,J)$ can be chosen so~that 
$\fM_{\Ga}(1;\al)\!=\!\eset$ whenever $g_v'\!>\!0$ for any $v\!\in\!\Ga_V$.
This establishes the $n\!=\!1,2$ cases of the last claim of Theorem~\ref{main_thm}.\\

\noindent
If $g\!\ge\!2$ in~\eref{AReq_e} and $n\!=\!3$,
\BE{n3dims_e}\dim^{\vir}\ov\fM_{g_0,k}(X,A)=\dim^{\vir}\ov\fM_{g,k}(X,A)\,.\EE
For any $v\!\in\!\Ga_V$ with $g_v\!\ge\!1$, 
$$\ov\fM_{g_v,k_v+\ell_v}(V,0)=\ov\cM_{g_v,k_v+\ell_v}\times V\,;$$
the obstruction bundle for this moduli space is 
\BE{ObsBun_e}\pi_1^*\E^*\!\otimes\!\pi_2^*TV \lra \ov\cM_{g_v,k_v+\ell_v}\!\times\!V\,,\EE
where $\E\!\lra\!\ov\cM_{g_v,k_v+\ell_v}$ is the rank~$g_v$ Hodge vector bundle
of holomorphic differentials; it has chern classes $\la_i\!\equiv\!c_i(\E)$.
For $g_v\!\ge\!2$, it is the pull-back of the Hodge vector bundle over~$\ov\cM_g$
by the forgetful morphism;
if $g_v\!=\!1$, it is the pull-back of the Hodge line bundle over~$\ov\cM_{1,1}$.
By \cite[(5.3)]{Mumford} in the first case and for dimensional reasons in the second case,
\BE{lag2_e} \la_{g_v}^2=0\in H^{4g_v}\big(\ov\cM_{g_v,k_v+\ell_v}\big).\EE
Since the obstruction bundle is given by~\eref{ObsBun_e}, 
\BE{VirClass_e} \big[\ov\fM_{g_v,k_v+\ell_v}(V,0;J,\nu)\big]=
e\big(\pi_1^*\E^*\!\otimes\!\pi_2^*TV\big)\cap
\big[\ov\cM_{g_v,k_v+\ell_v}\!\times\!V\big]\EE
for a generic $\nu\!\in\!\Ga_{g_v,k_v}^V\big(X,J)$.
By~\eref{n3dims_e}, $\fM_{\Ga_0}(1;\al)$ consists of isolated maps meeting~$V$
transversality at finitely many points~$p_j$
for such a choice of~$\nu$ (if $\fM_{\Ga_0}(1;\al)$ is not empty).
These points include the nodes where irreducible components of elements of~$\fM_{\Ga_0}(1;\al)$
meet the elements of $\ov\fM_{g_v,k_v+\ell_v}(V,0;J,\nu)$ with $v\!\in\!\Ga_V$.
By~\eref{VirClass_e} and~\eref{lag2_e}, the homology class represented by the subspace of 
the latter passing through~$p_j$ is 
$$e\big(\E^*\!\otimes\!T_{p_j}V\big)\cap \big[\ov\cM_{g_v,k_v+\ell_v}\big]
= \la_{g_v}^2\cap \big[\ov\cM_{g_v,k_v+\ell_v}\big]=0.$$
Thus, the contribution of $\fM_{\Ga}(1;\al)$ to the left-hand side of~\eref{AReq_e}
is the degree of a zero-cycle which vanishes
in the homology  and thus is~0, if $g_v\!\ge\!1$ for any $v\!\in\!\Ga_V$.
This establishes the $\ka\!=\!1$, $n\!=\!3$, and $g\!\ge\!2$ case of~\eref{AReq_e}.\\

\noindent
The remaining case of Theorem~\ref{main_thm} is $\ka\!=\!1$, $n\!=\!4$, $g\!=\!2$,
and $A\!\neq\!0$ (otherwise both sides of~\eref{AReq_e} vanish for dimensional reasons). 
By the previous discussion, it is sufficient to show that $\fM_{\Ga}(1;\al)\!=\!\eset$
for a generic~$\nu$ satisfying~\eref{nuVrestr_e} if $g_v\!=\!2$ for some $v\!\in\!\Ga_V$.
This assumption implies that $g_{v'}\!=\!0$ for all $v'\!\in\!\Ga_V\!-\!v$ and 
\BE{n4dims_e}\dim^{\vir}\ov\fM_{g_0,k}(X,A)=\dim^{\vir}\ov\fM_{g,k}(X,A)+4\,.\EE
By the first equation in~\eref{virdim_e}, the virtual dimension 
of $\ov\fM_{2,0}(V,0)$ is~0.
Thus, we can choose a deformation~$\nu$ satisfying~\eref{nuVrestr_e} so that the image
of all elements of $\ov\fM_{g_0,k_v+\ell_v}(X,A;J,\nu)$ is contained in arbitrary small
neighborhoods of finitely many points of~$V$.
By~\eref{n4dims_e}, for a generic such~$\nu$ there are no elements of 
$\fM_{\Ga_0}(1;\al)$ that pass through these images,
since each point in $V\!\subset\!X$ imposes a condition of real codimension~6 on maps to~$X$.
Thus, $\fM_{\Ga}(1;\al)\!=\!\eset$ for a generic~$\nu$ satisfying~\eref{nuVrestr_e} in this case
as well.

\subsection{Via the symplectic sum formula}
\label{SympSum_subs}

\noindent
We next give a proof of Theorem~\ref{main_thm} by applying the symplectic sum 
formula to the symplectic decomposition
\BE{Xdecomp_e}X=X\underset{V=\P_{X,\i}V}{\#}\P_XV\,,\EE
with $\P_{X,\i}V\!\subset\!\P_XV$ as in \eref{PXVdfn_e} and~\eref{PXVsecdfn_e}.
The $\P^1$-bundle $\P_XV\!\lra\!V$ carries a symplectic form induced from~$\om|_V$
in a way well-defined up to symplectic deformation equivalence;
see the beginning of Section~\ref{VMext_subs}.\\

\noindent
According to the symplectic sum formula, the left-hand side of~\eref{AReq_e}
is a weighted count of $k$-marked genus~$g$ degree~$A$ $(J,\nu)$-maps~$u$
into
\BE{X1Vdfn_e} X_1^V\equiv X\underset{V=\P_{X,\i}V}{\cup}\P_XV\EE
that have the same contact order with the common hypersurface~$V$ at the two branches
of each node, take no smooth point of the domain to~$V$, and meet 
generic representatives of the Poincare duals of~$\ka$ and~$\al_i$.
The \sf{degree} of such~$u$ is the class in~$X$ represented by the composition of~$u$
with the natural projection
\BE{qproj_e} q\!:  X\underset{V=\P_{X,\i}V}{\cup}\P_XV\lra X\,;\EE
its \sf{weight} is the product of the contacts with the common hypersurface
(counted once for each pair of contacts from the two sides).\\

\noindent
Spaces $\fM_{\Ga}(\ka;\al)$ of such maps to~$X_1^V$ can be represented by the same 
kind of connected bipartite graphs~$\Ga$ as in Section~\ref{DirComp_subs}
with an additional decoration $d_e\!\in\!\Z^+$ for 
each edge~$e$; see Figure~\ref{bigraph_fig2}, where edge labels~1 are not
explicitly indicated.
The subset~$\Ga_V$ of vertices now describes the topological components~$\Si_v$
of the domain~$\Si$ that are mapped to~$\P_XV$;
the additional decorations~$d_e$ specify the orders of contacts with~$V$
of the branches of the nodes associated with the edges.
The stability condition on~$\Ga$ described before now applies only to the vertices 
$v\!\in\!\Ga_X$.
The composition of an element~$u$ in such a space $\fM_{\Ga}(\ka;\al)$ with~$q$
produces an element of the space $\fM_{\bar\Ga}(\ka;\al)$ considered above
with $\bar\Ga$ obtained from~$\Ga$ by dropping the edge labels and contracting
off the unstable vertices $v\!\in\!\Ga_V$ and the edges leaving from them.\\

\noindent
Breaking a graph~$\Ga$  as in the previous paragraph at the mid-point of each edge, 
we obtain 
the relative moduli spaces
$$\ov\fM_{g_v,k_v;\s_v}^V(X,A_v) \qquad\hbox{and}\qquad 
\ov\fM_{g_v,k_v;\s_v}^{\P_{X,\i}V}(\P_XV,A_v(\s_v))$$
with $v\!\in\!\Ga_X$ and $v\!\in\!\Ga_V$, respectively, where 
$\s_v$ is the tuple given by the labels on the edges and
$A_v(\s_v)$ is the sum of the push-forward of~$A_v$
under the inclusion $\P_{X,0}V\!\lra\!\P_XV$ and $|\s_v|$ fiber classes.
The left-hand side of~(1.9) is the sum over all admissible graphs~$\Ga$
of the weighted products of the corresponding relative invariants with 
the relative primary insertions given by the usual Kunneth decomposition 
of the diagonal in~$V^2$ at each node; 
see the second-to-last equation on page~201 in~\cite{Jun2}
and equations~(5.4), (5.7), and~(5.8) in~\cite{LR}.
Since the intersection points of elements of $\fM_{\Ga}(\ka;\al)$ are unordered,
while the contact points of the corresponding relative invariants are ordered,
the contribution from each graph~$\Ga$ should be divided by the number of
orderings of the intersection points.\\

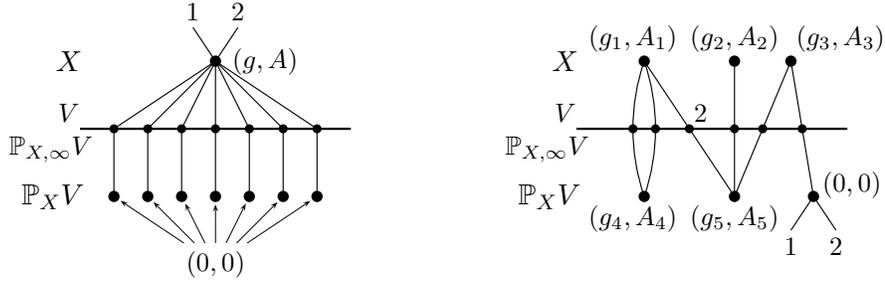
\begin{figure}
\begin{pspicture}(0,0)(11,4)
\psset{unit=.3cm}
\psline[linewidth=.1](10,6)(22,6)\rput(9.5,9){$X$}\rput(8.8,3){$\P_XV$}
\rput(9.5,6.7){\sm{$V$}}\rput(8.7,5.1){\sm{$\P_{X,\i}V$}}
\pscircle*(16,9){.25}
\psline[linewidth=.05](16,9)(15,10.5)\psline[linewidth=.05](16,9)(17,10.5)
\rput(15,11.2){\sm{1}}\rput(17,11.2){\sm{2}}\rput(18.2,9){\sm{$(g,A)$}}
\pscircle*(11.5,6){.2}\pscircle*(13,6){.2}\pscircle*(14.5,6){.2}\pscircle*(16,6){.2}
\pscircle*(17.5,6){.2}\pscircle*(19,6){.2}\pscircle*(20.5,6){.2}
\psline[linewidth=.05](16,9)(11.5,6)\psline[linewidth=.05](16,9)(13,6)
\psline[linewidth=.05](16,9)(14.5,6)\psline[linewidth=.05](16,9)(16,6)
\psline[linewidth=.05](16,9)(17.5,6)\psline[linewidth=.05](16,9)(19,6)
\psline[linewidth=.05](16,9)(20.5,6)
\psline[linewidth=.05](11.5,6)(11.5,3)\psline[linewidth=.05](13,6)(13,3)
\psline[linewidth=.05](14.5,6)(14.5,3)\psline[linewidth=.05](16,6)(16,3)
\psline[linewidth=.05](17.5,6)(17.5,3)\psline[linewidth=.05](19,6)(19,3)
\psline[linewidth=.05](20.5,6)(20.5,3)
\pscircle*(11.5,3){.25}\pscircle*(13,3){.25}\pscircle*(14.5,3){.25}\pscircle*(16,3){.25}
\pscircle*(17.5,3){.25}\pscircle*(19,3){.25}\pscircle*(20.5,3){.25}
\rput(16,0){\rnode{A}{\sm{$(0,0)$}}}
\pnode(11.5,3){B1}\pnode(13,3){B2}\pnode(14.5,3){B3}\pnode(16,3){B4}
\pnode(17.5,3){B5}\pnode(19,3){B6}\pnode(20.5,3){B7}
\ncline[linewidth=.03,nodesep=3pt]{->}{A}{B1}
\ncline[linewidth=.03,nodesep=3pt]{->}{A}{B2}
\ncline[linewidth=.03,nodesep=3pt]{->}{A}{B3}
\ncline[linewidth=.03,nodesep=3pt]{->}{A}{B4}
\ncline[linewidth=.03,nodesep=3pt]{->}{A}{B5}
\ncline[linewidth=.03,nodesep=3pt]{->}{A}{B6}
\ncline[linewidth=.03,nodesep=3pt]{->}{A}{B7}
\psline[linewidth=.1](32,6)(44,6)\rput(31.5,9){$X$}\rput(30.8,3){$\P_XV$}
\rput(31.5,6.7){\sm{$V$}}\rput(30.7,5.1){\sm{$\P_{X,\i}V$}}
\pscircle*(35,3){.25}\pscircle*(35,9){.25}
\pscircle*(39,3){.25}\pscircle*(41.5,9){.25}\pscircle*(39,9){.25}
\psline[linewidth=.05](39,3)(35,9)
\psline[linewidth=.05](39,3)(39,9)\psline[linewidth=.05](39,3)(41.5,9)
\pscircle*(37,6){.2}\pscircle*(40.25,6){.2}\pscircle*(39,6){.2}
\psarc[linewidth=.05](26.25,6){9.25}{-18.9}{18.9}\psarc[linewidth=.05](43.75,6){9.25}{161.1}{198.9}
\pscircle*(35.5,6){.2}\pscircle*(34.5,6){.2}
\psline[linewidth=.05](41.5,9)(42.5,3)\pscircle*(42.5,3){.25}\pscircle*(42,6){.2}
\psline[linewidth=.05](42.5,3)(41.5,1.5)\psline[linewidth=.05](42.5,3)(43.5,1.5)
\rput(41.5,.8){\sm{1}}\rput(43.5,.8){\sm{2}}
\rput(34.5,2){\sm{$(g_4,A_4)$}}\rput(39,2){\sm{$(g_5,A_5)$}}
\rput(34.5,10){\sm{$(g_1,A_1)$}}\rput(43.7,10){\sm{$(g_3,A_3)$}}
\rput(44.2,3.5){\sm{$(0,0)$}}
\rput(39,10){\sm{$(g_2,A_2)$}}\rput(37.5,6.7){\sm{$2$}}
\end{pspicture}
\caption{Bipartite graphs~$\Ga$ representing elements of $\ov\fM_{g,2}(X_1^V,A;J,\nu)$ 
with $A\!\cdot\!V\!=\!7$.}
\label{bigraph_fig2}
\end{figure}

\noindent
Some care is needed in translating the constraints~$\ka$ and~$\al_i$ in~\eref{absGWdfn_e}
into constraints for the relative invariants of~$(X,V)$ and~$(\P_XV,\P_{X,\i}V)$.
If $v\!\in\!\Ga_X$, the corresponding relative invariant of~$(X,V)$ keeps 
the insertion~$\al_i$ at the absolute marked point corresponding to~$i$,
if it is carried by~$\Si_v$.
If $v\!\in\!\Ga_V$, the corresponding relative invariant of $(\P_XV,\P_{X,\i}V)$ 
gets the insertion $\pi_{X,V}^*(\al_i|_V)$ at the absolute marked point corresponding to~$i$,
where $\pi_{X,V}\!:\P_XV\!\lra\!V$ is the projection map.
Denote~by
$$\st_v\!: \ov\fM_{g_v,k_v;\s_v}^V(X,A_v)\lra \ov\cM_{g_v,k_v+\ell(\s_v)}
\quad\hbox{or}\quad
\st_v\!: \ov\fM_{g_v,k_v;\s_v}^{\P_{X,\i}V}(\P_XV,A_v(\s_v))\lra  \ov\cM_{g_v,k_v+\ell(\s_v)}$$
the stabilization map, depending on whether $v\!\in\!\Ga_X$ or $v\!\in\!\Ga_V$,
respectively; in the unstable range, the target of this map is one point.
Let
$$\gl_{\Ga}\!: \prod_{v\in\Ga}\!\ov\cM_{g_v,k_v+\ell(\s_v)}\lra \ov\cM_{g,k}$$
be the morphism given by identifying pairs of points corresponding to 
the same edge in~$\Ga$.
In particular,
$$\gl_{\Ga}\circ \prod_{v\in\Ga}\!\st_v
=\st\!\circ\!\io_{\Ga}: \ov\fM_{\Ga}\lra \ov\cM_{g,k}\,,$$
where $\io_{\Ga}\!:\ov\fM_{\Ga}\!\lra\!\ov\fM_{g,k}(X_1^V,A)$ is the inclusion map.
By the Kunneth formula,
$$\gl_{\Ga}^*\ka=\sum_j\bigotimes_{v\in\Ga}\!\ka_{j;v}
\in \bigotimes_{v\in\Ga}\!H^*\big(\ov\cM_{g_v,k_v+\ell(\s_v)}\big)
=H^*\bigg( \prod_{v\in\Ga}\!\ov\cM_{g_v,k_v+\ell(\s_v)}\bigg)$$
for some $\ka_{j;v}\!\in\!H^*(\ov\cM_{g_v,k_v+\ell(\s_v)})$.
In the $\Ga$-summand in the symplectic sum decomposition for 
the absolute GW-invariant~\eref{absGWdfn_e},
the insertion~$\ka$ is replaced by the insertion~$\ka_{v;j}$ 
in the relative invariant corresponding to the vertex~$v$
and the resulting products are summed over all~$j$.
This is carried out in a specific case in Section~\ref{P1eg_subs}.\\

\noindent
Since $V$ is assumed to be $(g,A)$-hollow in Theorem~\ref{main_thm},
we can choose an almost complex structure~$J_V$ on~$V$ so that it satisfies 
the conditions of Definition~\ref{hollow_dfn}. 
Using a connection in~$\cN_XV$ as in Section~\ref{RelDfn_sec},
we can extend~$J_V$ to an almost complex structure~$J$ on~$\P_XV$ so
that the condition \eref{NijenCond_e} is satisfied and
the projection $\pi_{X,V}\!:\P_XV\!\lra\!V$ is $(J_V,J)$-holomorphic.
Using the same connection, we can extend any $\nu\!\in\!\Ga_{g,k}(V,J_V)$
to  
$$\pi_{X,V}^*\nu\in\Ga_{g,k}^{\P_{X,\i}V}(\P_XV,J)$$
so that $\pi_{X,V}\!\circ\!u\!:\Si\!\lra\!V$ is $(J_V,\nu)$-holomorphic 
whenever $u\!:\Si\!\lra\!\P_XV$ is $(J,\pi_{X,V}^*\nu)$-holomorphic.\\

\noindent
By the previous paragraph, we can assume that the degree~$A_v$ of 
the composition of the restriction of any element of~$\fM_{\Ga}$
to a topological component~$\Si_v$ of the domain mapped into $\P_XV$ with~$\pi_{X,V}$ is zero,
i.e.~all relevant relative invariants of $(\P_XV,\P_{X,\i}V)$ lie in
the fiber classes~$d_vF$ with $d_v\!\in\!\Z^{\ge0}$.
A~key point of the paragraph above the previous one is that 
the class integrated over the relative moduli space $\ov\fM_{g_v,k_v;\s_v}^{\P_{X,\i}V}(\P_XV,dF)$
corresponding to the vertex~$v$ is pulled back by the projection map
\BE{vphdfn_e}\vph\equiv\st\!\times\!\pi_{X,V}\!: 
\ov\fM_{g_v,k_v;\s_v}^{\P_{X,\i}V}(\P_XV,d_vF)\lra\ov\cM_{g_v,k_v+\ell(\s_v)}\!\times\!V\,.\EE
In particular, if 
\BE{dimhigh_e}\dim^{\vir}\ov\fM_{g_v,k_v;\s_v}^{\P_{X,\i}V}(\P_XV,d_vF)
>\dim\big(\ov\cM_{g_v,k_v+\ell(\s_v)}\!\times\!V\big),\EE
then the relative invariant corresponding to the vertex $v\!\in\!\Ga_V$ vanishes
and such bipartite graph~$\Ga$ does not contribute to the left-hand side of~\eref{AReq_e}.\\

\noindent
By the second equation in~\eref{virdim_e} and the condition $|\s_v|\!=\!d_v$, 
\eref{dimhigh_e} is equivalent~to
$$d_v+(n\!-\!3)(1\!-\!g_v)+k_v+\ell(\s_v)
>n\!-\!1+\begin{cases}
0,&\hbox{if}~g_v\!=\!0,~\!k_v\!+\!\ell(\s_v)\!\le\!2;\\
3g_v\!-\!3\!+\!k_v\!+\!\ell(\s_v),&\hbox{otherwise}.
\end{cases}$$
If $g_v\!=\!0$, either $d_v\!\in\!\Z^+$ (and thus $\ell(\s_v)\!\in\!\Z^+$)
or $k_v\!\ge\!3$ for stability reason.
Thus, the relative invariant corresponding to a vertex $v\!\in\!\Ga_V$ with $g_v\!=\!0$
is zero unless $d_v\!=\!1$, $k_v\!=\!0$, and $\s_v\!=\!(1)$. 
In this remaining case, the only nonzero relative invariant~is
$$\GW_{0,F;(1)}^{\P_XV,\P_{X,\i}V}\big(1,1;\PD_V([\pt])\big)=1.$$
In particular, the contribution to the left-hand side of~\eref{AReq_e} from the simplest graph,
i.e.~as in the first diagram in Figure~\ref{bigraph_fig2},~is
\BE{RelContr_e}\frac{1}{(A\cdot V)!}\GW_{g,A;\1_{A\cdot V}}^{X,V}\big(\ka;\al;1^{A\cdot V}\big)
\equiv  \frac{1}{(A\cdot V)!}\GW_{g,A;\1_{A\cdot V}}^{X,V}\big(\ka;\al\big)\,.\EE
All other nonzero contributions to the left-hand side of~\eref{AReq_e} can come
only from graphs~$\Ga$ such that $(d_v,k_v,\s_v)\!=\!(1,0,(1))$ for all $v\!\in\!\Ga_V$
with $g_v\!=\!0$ and $g_v\!\in\!\Z^+$ for some $v\!\in\!\Ga_V$.
Since there are no such graphs if $g\!=\!0$, this concludes the proof
of the $g\!=\!0$ case of~\eref{AReq_e}.\\

\noindent
We next show that the relative invariants corresponding to $v\!\in\!\Ga_V$
with $g_v\!\in\!\Z^+$ also vanish under the assumptions of~\eref{AReqcond_e}.
If $g_v\!\ge\!2$ and $\nu\!\in\!\Ga_{g_v,0}(V,J_V)$, 
the composition with the projection~$\pi_{X,V}$ induces a continuous~map 
\BE{fMfibr_e}\pi_{X,V}\!:\ov\fM_{g_v,k_v;\s_v}^{\P_{X,\i}V}\big(\P_XV,d_vF;J,\pi_{X,V}^*\nu\big)
\lra \ov\fM_{g_v,0}\big(V,0;J_V,\nu\big).\EE
Since 
$$\dim\ov\fM_{g_v,0}\big(V,0;J_V,\nu\big)
=\dim^{\vir}\ov\fM_{g_v,0}\big(V,0\big)
=(n\!-\!4)(1\!-\!g_v) \qquad\forall~g_v\!\ge\!2$$
for a generic $\nu\!\in\!\Ga_{g_v,0}(V,J_V)$,
the moduli spaces in~\eref{fMfibr_e} are empty if $g_v\!\ge\!2$ and $n\!\ge\!5$.
In particular, the relative invariants vanish in these cases.\\

\noindent
If $g_v\!=\!1$, then $d_v,\ell(\s_v)\!\in\!\Z^+$ by the first assumption in~\eref{AReqcond_e}. 
For a generic $\nu\!\in\!\Ga_{1,1}(V,J_V)$,
$$\pi_{X,V}\!:\ov\fM_{1,k_v;\s_v}^{\P_{X,\i}V}\big(\P_XV,d_vF;J,\pi_{X,V}^*\nu\big)
\lra \ov\fM_{1,1}\big(V,0;J_V,\nu\big)$$
is then a fibration with typical fiber $\ov\fM_{1,k_v;\s_v}^{\pt}(\P^1,d)_{\fj}$,
where the subscript $\fj$ denotes the moduli space with $\fj$ fixed on $\ov\cM_{1,1}$.
Since the obstruction bundle for $\ov\fM_{1,1}(V,0)$ is given by~\eref{ObsBun_e},
\BE{g1_e}
\big[\ov\fM_{1,1}\big(V,0;J_V,\nu)\big]=
e\big(\pi_1^*\E^*\!\otimes\!\pi_2^*TV\big)\cap   \big[\ov\cM_{1,1}\!\times\!V\big]
=\{\fj\}\!\times\!V_1+\ov\cM_{1,1}\!\times\!V_0,\EE
where $V_0,V_1\!\subset\!V$ are some cycles  of $\R$-dimensions~0 and~2,
respectively, and $\fj$ is a fixed element of~$\cM_{1,1}$.
Since
$$\dim^{\vir}\ov\fM_{1,k_v;\s_v}^{\pt}(\P^1,d_v) = d_v+k_v\!+\!\ell(\s_v)
>k_v+\ell(\s_v)=\dim\ov\cM_{1,k_v+\ell(\s_v)},$$
the integral of the pull-back of any class by~\eref{vphdfn_e} 
vanishes on the last term in~\eref{g1_e}.
Since
$$\dim^{\vir}\ov\fM_{1,k_v;\s_v}^{\pt}(\P^1,d_v)_{\fj} = d_v\!-\!1+k_v\!+\!\ell(\s_v)
>k_v\!+\!\ell(\s_v)-1=\dim\ov\cM_{1,k_v+\ell(\s_v);\fj}\,,$$
the integral of the pull-back of any class by~\eref{vphdfn_e} 
vanishes on the first term on the RHS of~\eref{g1_e} as well.\\

\noindent
In summary, the only graph~$\Ga$ that contributes to the left-hand side of~\eref{AReq_e}
via the symplectic sum formula applied to the decomposition~\eref{Xdecomp_e}
under the assumptions~\eref{AReqcond_e} is the graph with 
$$|\Ga_X|=1 \qquad\hbox{and}\qquad 
(g_v,d_v,k_v,\s_v)=(0,1,0,(1))\quad\forall~v\!\in\!\Ga_V;$$
see the first diagram in Figure~\ref{bigraph_fig2}.
Since its contribution is given by~\eref{RelContr_e}, we have established the first claim
of Theorem~\ref{main_thm}.\\ 

\noindent
Suppose next that $\ka\!=\!1$ and $g\!\ge\!2$ in~\eref{AReq_e}, i.e.~only 
the primary insertions are considered.
The relative invariants of $(X,V)$ that enter into the symplectic sum formula
then count curves that meet generic Poincare duals of all the constraints~$\al_i$.
If $n\!=\!1,2$ and $g_0\!<\!g$, 
\BE{n3dims_e2}\dim^{\vir}\ov\fM_{g_0,k;\s_0}^V(X,A)<\dim^{\vir}\ov\fM_{g,k}(X,A)\EE
by~\eref{virdim_e}.
Thus, these relative invariants vanish if $n\!=\!1,2$ and
the total genus of the vertices in~$\Ga_X$ is less than~$g$.
This happens in particular if $g_v\!>\!0$ for any $v\!\in\!\Ga_V$.
Along with the paragraph containing~\eref{RelContr_e}, 
this establishes the $n\!=\!1,2$ cases of the last claim of Theorem~\ref{main_thm}.\\

\noindent
Suppose $g_v\!\ge\!2$ for some $v\!\in\!\Ga_V$ and $n\!=\!3$.
The dimensions of the two moduli spaces in~\eref{n3dims_e2} are then the same.
The relative invariants of $(X,V)$ that enter into the symplectic sum formula
thus count curves that meet~$V$ at finitely many distinct points~$\{p_j\}$.
Since the obstruction bundle for $\ov\fM_{g_v,0}(V,0)$ is given by~\eref{ObsBun_e},
the homology class of the subspace of elements of $\ov\fM_{g_v,0}(V,0;J_V,\nu)$ that pass through~$p_j$
$$\big[\ov\fM_{g_v,0}(V,0;J_V,\nu)|_{p_j}\big]=
e\big(\E^*\!\otimes\!T_{p_j}V\big)\cap
\big[\ov\cM_{g_v,0}\!\times\!\{p_j\}\big]
=\la_{g_v}^2\cap \big[\ov\cM_{g_v,0}\big]=0;$$
see~\eref{lag2_e}.
Thus, by~\eref{fMfibr_e}, the genus~$g_v$ relative invariants of $(\P_XV,\P_{X,\i}V)$
with a relative point insertion vanish in this case as well.\\

\noindent
The remaining case of Theorem~\ref{main_thm} is $\ka\!=\!1$, $n\!=\!4$, and $g\!=\!2$.
Since $A\!\neq\!0$ in this case, $d_v,\ell(\s_v)\!\in\!\Z^+$.
For a generic $\nu\!\in\!\Ga_{2,0}(V,J_V)$, the target in~\eref{fMfibr_e}
is a finite set of points, while the dimension of the fiber~is
$$d_v+1-g_v+k_v+\ell(\s_v)\ge 1-1+0+1=1.$$
Thus, the genus~2 relative invariants of $(\P_XV,\P_{X,\i}V)$
with only primary insertions from~$V$ vanish.
This concludes the proof of the last claim of Theorem~\ref{main_thm}.

\subsection{Extension to virtual cycles}
\label{VMext_subs}
  
\noindent
In the process of establishing the first claim of Theorem~\ref{main_thm} above,
we showed that the relative invariants in the fiber classes of $\P^1$-bundles
often vanish.
This, more technical, conclusion is summarized, in Lemma~\ref{relGW_lmm} below.
It leads to a version of Theorem~\ref{main_thm} for virtual moduli cycles;
see Corollary~\ref{VC_crl}.\\

\noindent
Let $(V,\om)$ be a compact symplectic manifold, $\pi_L\!:L\!\lra\!V$ be a complex line bundle,
and 
$$\pi_{L,V}\!: \P_L\equiv \P(L\!\oplus\!V\!\times\!\C)\lra V$$
be the bundle projection map.
Given a Hermitian metric~$\rho$ (square of the norm) and 
a $\rho$-compatible connection~$\na$ in~$L$,
let $\al$ denote the connection 1-form on the $\rho$-circle bundle in~$L$ and
its extension to $L\!-\!V$ via the retraction given by $v\!\lra\!v/|v|$.
The closed 2-form
$$\wt\om\equiv \pi_{X,V}^*\om-\ep\,\nd\bigg(\frac{\al}{1\!+\!\rho^2}\bigg)$$
on $L\!-\!V$ extends to a closed 2-form on~$\P_L$, which is symplectic 
if $\ep\!>\!0$ is sufficiently small;
we will take the symplectic deformation equivalence class of this form to 
be the default one.
Let 
$$\P_{L,\i}=\P(L\!\oplus\!0)\subset \P_L\,.$$
The projection map
$$\vph\equiv\st\!\times\!\pi_{L,V}\!: 
\ov\fM_{g,k;\s}^{\P_{L,\i}}(\P_L,dF)\lra\ov\cM_{g,k+\ell(\s)}\!\times\!V,$$
where $F\!\in\!H_2(\P_L;\Z)$ is the fiber class,
induces a push-forward on the virtual class:
$$\vph_*\big[\ov\fM_{g,k;\s}^{\P_{L,\i}}(\P_L,dF)\big]^{\vir}
\in H_*\big(\ov\cM_{g,k+\ell(\s)}\!\times\!V\big).$$
By the Poincare duality applied on $\ov\cM_{g,k+\ell(\s)}\!\times\!V$,
this push-forward is determined by the evaluation of cohomology classes pulled back
from $\ov\cM_{g,k+\ell(\s)}\!\times\!V$ by~$\vph$ on the virtual class
of $\ov\fM_{g,k;\s}^{\P_{L,\i}}(\P_L,dF)$.
Thus, Section~\ref{SympSum_subs} establishes the following statement.

\begin{lmm}\label{relGW_lmm}
Let $(V,\om)$ be a compact symplectic manifold of real dimension~$2(n\!-\!1)$ 
and $L\!\lra\!V$ be a complex line bundle.
If $g,d,k\!\in\!\Z^{\ge0}$ and $\s\!\in\!(\Z^+)^{\ell}$ are such~that 
\BE{AReqcond_e2}(g,d)\neq(1,0), 
\qquad\hbox{and}\qquad 
(n\!-\!5)g(g\!-\!1)\ge0\,,\EE
then 
$$\vph_*\big[\ov\fM_{g,k;\s}^{\P_{L,\i}}(\P_L,dF)\big]^{\vir}=
\begin{cases}[V],&\hbox{if}~(g,d,k,\s)\!=\!(0,1,0,(1));\\
0,&\hbox{otherwise}.
\end{cases}$$
\end{lmm}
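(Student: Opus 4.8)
The plan is to extract Lemma~\ref{relGW_lmm} directly from the analysis already carried out in Section~\ref{SympSum_subs}, translated into the language of an abstract $\P^1$-bundle $\P_L\!\lra\!V$ over a compact symplectic manifold $(V,\om)$. The key observation is that \emph{no hollowness hypothesis on $V$ is needed}: the vanishing we want concerns only relative invariants of $(\P_L,\P_{L,\i})$ in the \emph{fiber} classes $dF$, and fiber-class curves project under $\pi_{L,V}$ to \emph{constant} maps into $V$ automatically, since a non-constant projected curve would carry positive $\om$-area while $dF$ has zero area against $\pi_{L,V}^*\om$. So first I would fix a generic $\om|_V$-tame (or compatible) almost complex structure $J_V$ on $V$, lift it to an almost complex structure $J$ on $\P_L$ via a connection in $L$ as in Section~\ref{RelDfn_sec} so that \eref{NijenCond_e} holds and $\pi_{L,V}$ is $(J_V,J)$-holomorphic, and fix a generic perturbation $\nu\!\in\!\Ga_{g,0}(V,J_V)$, lifted to $\pi_{L,V}^*\nu$ on $\P_L$. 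As in the paragraph around \eref{fMfibr_e}, composition with $\pi_{L,V}$ then gives a continuous map $\ov\fM_{g,k;\s}^{\P_{L,\i}}(\P_L,dF;J,\pi_{L,V}^*\nu)\!\lra\!\ov\fM_{g,0}(V,0;J_V,\nu)$ and, crucially, the integrand $\vph^*(\ ?\ )$ against the virtual class is pulled back from $\ov\cM_{g,k+\ell(\s)}\!\times\!V$ via $\vph\!=\!\st\!\times\!\pi_{L,V}$.

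Next I would run exactly the three-case dimension analysis of Section~\ref{SympSum_subs}, now for a single vertex rather than a whole graph. For $g\!=\!0$: stability forces either $d\!\ge\!1$ (hence $\ell(\s)\!\ge\!1$) or $k\!\ge\!3$; the computation following \eref{dimhigh_e} shows $\dim^{\vir}\ov\fM_{0,k;\s}^{\P_{L,\i}}(\P_L,dF)>\dim(\ov\cM_{0,k+\ell(\s)}\!\times\!V)$ unless $(d,k,\s)=(1,0,(1))$, in which case the one nonzero invariant $\GW_{0,F;(1)}^{\P_L,\P_{L,\i}}(1,1;\PD_V([\pt]))=1$ yields $\vph_*[\cdot]^{\vir}=[V]$. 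For $g\!\ge\!2$ and $n\!\ge\!5$: $\dim^{\vir}\ov\fM_{g,0}(V,0)=(n\!-\!4)(1\!-\!g)<0$ for a generic $\nu$, so the target of \eref{fMfibr_e} is empty and the moduli space is empty, forcing vanishing. For $g\!=\!1$: the condition $(g,d)\!\neq\!(1,0)$ forces $d,\ell(\s)\!\in\!\Z^+$, and one repeats the computation with \eref{g1_e}: the virtual class of $\ov\fM_{1,1}(V,0;J_V,\nu)$ splits as $\{\fj\}\!\times\!V_1+\ov\cM_{1,1}\!\times\!V_0$, and the fiberwise dimension counts $\dim^{\vir}\ov\fM_{1,k;\s}^{\pt}(\P^1,d)>\dim\ov\cM_{1,k+\ell(\s)}$ and $\dim^{\vir}\ov\fM_{1,k;\s}^{\pt}(\P^1,d)_{\fj}>\dim\ov\cM_{1,k+\ell(\s);\fj}$ kill both terms. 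Note the hypothesis $(n\!-\!5)g(g\!-\!1)\!\ge\!0$ is exactly what licenses the $g\!\ge\!2$, $n\!\ge\!5$ step while imposing no constraint when $g\!\le\!1$.

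Having settled $g\!\le\!1$ and $(g\!\ge\!2,\ n\!\ge\!5)$, the only residual range permitted by \eref{AReqcond_e2} is $g\!\ge\!2$ with $n\!\le\!4$ — but here I would observe that this case is \emph{not} excluded by \eref{AReqcond_e2} since $(n\!-\!5)g(g\!-\!1)<0$ then, so in fact \eref{AReqcond_e2} already rules it out: when $g\!\ge\!2$ the factor $g(g\!-\!1)>0$, so $(n\!-\!5)g(g\!-\!1)\!\ge\!0$ forces $n\!\ge\!5$. Thus the three cases $g\!=\!0$, $g\!=\!1$, and $(g\!\ge\!2, n\!\ge\!5)$ exhaust the hypothesis, and in each the relative invariant with any insertion pulled back by $\vph$ vanishes unless $(g,d,k,\s)=(0,1,0,(1))$. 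By Poincar\'e duality on $\ov\cM_{g,k+\ell(\s)}\!\times\!V$, knowing all such pairings determines $\vph_*[\ov\fM_{g,k;\s}^{\P_{L,\i}}(\P_L,dF)]^{\vir}$, giving the stated dichotomy.

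The main obstacle I anticipate is purely bookkeeping rather than conceptual: one must verify that the virtual-class constructions invoked in Section~\ref{RelDfn_sec} (and in \cite{IPrel,Jun1,LR}) apply in this abstract $\P_L$ setting — i.e.\ that $(\P_L,\P_{L,\i})$ satisfies the relevant semi-positivity/algebraicity hypotheses, or else that one works with Kuranishi structures — and that $\pi_{L,V}$ indeed commutes with the perturbation data so that the fibration \eref{fMfibr_e} is genuinely continuous and the integrand is genuinely $\vph$-pulled-back; the statement of the lemma as a \emph{virtual-class} identity (rather than a pseudocycle count) is what makes this the technical heart. I would handle it by invoking the semi-positive geometric constructions of Section~\ref{RelDfn_sec} verbatim when they apply (as in the Cieliebak-Mohnke setting, where all relevant domains are stable) and otherwise citing the Kuranishi-structure construction with local perturbations $\nu$ respecting \eref{nuVrestr_e}, exactly as the last paragraph of Section~\ref{RelDfn_sec} prescribes.
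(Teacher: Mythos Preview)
Your proposal is correct and follows essentially the same route as the paper: the paper states explicitly that ``Section~\ref{SympSum_subs} establishes the following statement,'' and you have accurately extracted the three-case analysis ($g=0$ via the dimension inequality after~\eref{dimhigh_e}; $g=1$ via the obstruction-bundle splitting~\eref{g1_e}; $g\ge2$ with $n\ge5$ via the negativity of $\dim^{\vir}\ov\fM_{g,0}(V,0)$), together with the Poincar\'e-duality observation that pairings against $\vph$-pulled-back classes determine the push-forward. Your remark that the hypothesis $(n-5)g(g-1)\ge0$ exactly forces $n\ge5$ when $g\ge2$, so the three cases are exhaustive, is the correct bookkeeping.

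One small imprecision: fiber-class $(J,\pi_{L,V}^*\nu)$-maps project to \emph{degree-$0$} $(J_V,\nu)$-maps to~$V$, not literally to constant maps once $\nu\neq0$; the energy argument you give shows the projected class is zero, which is what feeds into the fibration~\eref{fMfibr_e} over $\ov\fM_{g,0}(V,0;J_V,\nu)$. This does not affect the argument.
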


\begin{crl}[D.~Maulik]\label{VC_crl}
Suppose $(X,\om)$ is a projective manifold of real dimension~$2n$, 
\hbox{$g,k\!\in\!\Z^{\ge0}$}, \hbox{$A\!\in\!H_2(X;\Z)$}, and
$V\!\subset\!X$ is a $(g,A)$-hollow projective hypersurface such that $A\!\cdot\!V\!\ge\!0$.
If 
\BE{AReqcond_e3}(g,A)\neq (1,0) \qquad\hbox{and}\qquad (n\!-\!5)g(g\!-\!1)\ge0\,,\EE
then 
\BE{AReq_e2}
\big[\ov\fM_{g,k}(X,A)\big]^{\vir}
=\frac{1}{(A\cdot V)!}\,f_*\big[\ov\fM_{g,k;\1_{A\cdot V}}(X,A)\big]^{\vir}\,,\EE
where $f$ is the morphism between the moduli spaces dropping the relative marked points. 
\end{crl}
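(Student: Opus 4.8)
The plan is to mimic the symplectic sum argument of Section~\ref{SympSum_subs} at the level of virtual cycles rather than numerical invariants, using the degeneration formula for virtual classes available in the algebraic category (Li's work on relative GW-theory). First I would apply the degeneration to the normal cone of $V\!\subset\!X$, i.e.\ the algebraic analogue of the symplectic decomposition~\eref{Xdecomp_e}, expressing $[\ov\fM_{g,k}(X,A)]^{\vir}$ as a weighted sum over bipartite graphs~$\Ga$ of (gluing pushforwards of) products of relative virtual classes: a relative class for $(X,V)$ attached to each $v\!\in\!\Ga_X$ and a relative class for $(\P_XV,\P_{X,\i}V)$ attached to each $v\!\in\!\Ga_V$, glued along the diagonal in $V^{\ell}$ over the nodes. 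Since $V$ is $(g,A)$-hollow, one can arrange that every component mapped into the $\P^1$-bundle has vanishing ``horizontal'' degree, so all relative classes attached to vertices $v\!\in\!\Ga_V$ live in fiber classes $d_vF$; this is exactly the input needed to invoke Lemma~\ref{relGW_lmm}.

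The key step is then the vertex-by-vertex vanishing. For each $v\!\in\!\Ga_V$, the relevant relative virtual class is $[\ov\fM_{g_v,k_v;\s_v}^{\P_{X,\i}V}(\P_XV,d_vF)]^{\vir}$, which (after applying $\vph$ of~\eref{vphdfn_e}) Lemma~\ref{relGW_lmm} forces to be either $0$ or, in the single case $(g_v,d_v,k_v,\s_v)\!=\!(0,1,0,(1))$, the fundamental class of $V$. The only subtlety is that in the degeneration formula the classes attached to the $\P_XV$-vertices are not literally pulled back by $\vph$ — they carry the diagonal insertions from the gluing. But every such insertion is pulled back from $\ov\cM_{g_v,k_v+\ell(\s_v)}\!\times\!V$ (the $\ov\cM$ factor from the stabilization maps $\st_v$ and the $\gl_\Ga$-decomposition of $\ka$, here $\ka\!=\!1$; the $V$ factor from the Künneth pieces of the diagonal), so the projection-formula identity $\vph^*(\cdots)\cap[\cdots]^{\vir}=\vph^*(\cdots)\cap \vph^*\vph_*[\cdots]^{\vir}\cap[\mathrm{pt}]\text{-type reasoning}$ reduces the contribution of vertex~$v$ to $\vph_*[\ov\fM_{g_v,k_v;\s_v}^{\P_{X,\i}V}(\P_XV,d_vF)]^{\vir}$, to which Lemma~\ref{relGW_lmm} applies verbatim. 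Hence the only graph with a nonzero contribution is the one with $|\Ga_X|\!=\!1$ and all $\P_XV$-vertices of type $(0,1,0,(1))$, which contributes precisely $\tfrac{1}{(A\cdot V)!}f_*[\ov\fM_{g,k;\1_{A\cdot V}}(X,A)]^{\vir}$ (the factorial again accounting for the ordering of the $A\!\cdot\!V$ contact points, which are unordered on the absolute side). Tracking that a single rubber vertex of type $(0,1,0,(1))$ contributes a trivial gluing factor — this is the content of the Lemma's ``$=[V]$'' case — is what makes the combined relative class over the $\P_XV$-side collapse to an identity operator.

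I expect the main obstacle to be purely formal/bookkeeping rather than geometric: namely, making precise the translation between the symplectic-sum bookkeeping of Section~\ref{SympSum_subs} and Li's algebraic degeneration formula (matching the weights, the expanded-degeneration vs.\ rubber conventions, and the diagonal-splitting combinatorics), and verifying that the projection-formula manipulation above is valid at the level of virtual classes — i.e.\ that the only way cohomology enters the $\P_XV$-vertices is through $\vph$-pullbacks, so that Lemma~\ref{relGW_lmm}, which is a statement about $\vph_*$ of the virtual class, is exactly strong enough. The genus and dimension restrictions in~\eref{AReqcond_e3} are inherited directly from~\eref{AReqcond_e2} in Lemma~\ref{relGW_lmm}, so no new case analysis is needed beyond what was done in Section~\ref{SympSum_subs}; the projectivity hypothesis on $(X,V)$ is used only to have the degeneration formula for virtual classes at one's disposal.
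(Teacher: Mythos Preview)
Your proposal is correct and follows essentially the same route as the paper's proof: apply Li's degeneration formula for virtual classes to the deformation to the normal cone of~$V$, use the $(g,A)$-hollow hypothesis to reduce to graphs with $A_v\!=\!0$ for all $v\!\in\!\Ga_V$, observe that for such graphs the relative evaluation maps and $q\!\circ\!\Phi_\Ga$ factor through~$\vph$ (so the $\P_XV$-side contribution is governed by $\vph_*[\ov\fM(\P_XV,\Ga_V)]^{\vir}$), and then invoke Lemma~\ref{relGW_lmm} to kill all but the basic graph. The only cosmetic difference is that the paper states the factoring-through-$\vph$ step more directly (your ``projection-formula'' display is a bit garbled, though your intent is clear).
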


\begin{proof}
Let $\De\!\subset\!\C$ denote a small disk around the origin, 
$\cZ$ be the blowup of $\De\!\times\!X$ along $0\!\times\!V$,
and $\pi\!:\cZ\!\lra\!\De$ be the projection map.
Thus, 
$$\cZ_{\la}=X\quad\forall~\la\in\De^*\!\equiv\!\De\!-\!0
\qquad\hbox{and}\qquad \cZ_0\equiv \pi^{-1}(0)=X_1^V\,,$$
with notation as in~\eref{X1Vdfn_e}.\\

\noindent
As summarized in \cite[Section~0]{Jun2}, there are moduli stacks 
$\ov\fM_{g,k}(X_1^V,A)$ and $\ov\fM_{g,k}(\cZ,A)$.
The former carries a virtual class so that 
\BE{faminc_e}\big[\ov\fM_{g,k}(X_1^V,A)\big]^{\vir}=\big[\ov\fM_{g,k}(\cZ_{\la},A)\big]^{\vir}
=\big[\ov\fM_{g,k}(X,A)\big]^{\vir}\EE
under the inclusion into $\ov\fM_{g,k}(\cZ,A)$.
In the case of the given family $\cZ\!\lra\!\De$, \eref{faminc_e} can be written~as
\BE{qpush_e}q_*\big[\ov\fM_{g,k}(X_1^V,A)\big]^{\vir}
=\big[\ov\fM_{g,k}(X,A)\big]^{\vir}\,,\EE
with $q$ as in~\eref{qproj_e}.
By the last formula on page~201 in~\cite{Jun2},
\BE{VCsplit_e}\big[\ov\fM_{g,k}(X_1^V,A)\big]^{\vir}
=\sum_{\Ga}\frac{\bfm(\Ga)}{\ell(\Ga)!}
\Phi_{\Ga*}\De^{!}\big( 
\big[\ov\fM(X,\Ga_X)\big]^{\vir}\!\times\!\big[\ov\fM(\P_XV,\Ga_V)^{\vir}\big]\big).\EE
This sum is taken over the same bipartite graphs~$\Ga$ as in Section~\ref{SympSum_subs}.
For such a graph~$\Ga$, $\bfm(\Ga)$ is the product of the edge labels 
(of contacts with the common divisor~$V$) and 
$\ell(\Ga)$ is the number of edges (of contacts with~$V$). 
In the notation of Section~\ref{SympSum_subs}, 
the two moduli spaces appearing on the right-hand side of~\eref{VCsplit_e} are
$$\prod_{v\in\Ga_X}\!\!\ov\fM_{g_v,k_v;\s_v}^V(X,A_v) \qquad\hbox{and}\qquad
\prod_{v\in\Ga_V}\!\!\ov\fM_{g_v,k_v;\s_v}^{\P_{X,\i}V}\big(\P_XV,A_v(\s_v)\big)\,,$$
respectively.
The symbol~$\De^!$ indicates the cap product with
the product over the edges of~$\Ga$ of the pull-back of the diagonal $\De_V\!\subset\!V^2$
by the evaluation maps at the relative marked points corresponding to the same edge,
while~$\Phi_{\Ga}$ is the morphism given by identifying these marked points.\\

\noindent
Since $V$ is $(g,A)$-hollow, the only possible nonzero summands in~\eref{VCsplit_e} 
correspond to~$\Ga$ with $A_v\!=\!0$ for all $v\!\in\!\Ga_V$.
For such~$\Ga$, the relative evaluation maps are given by the composition
with the projection map $\pi_{X,V}\!:\P_XV\!\lra\!V$,
while $q\!\circ\!\Phi_{\Ga}$ factors through $\id\!\times\!\vph$. 
Combining~\eref{qpush_e} and~\eref{VCsplit_e}, we thus obtain
\BE{VCsplit_e2}\big[\ov\fM_{g,k}(X,A)\big]^{\vir}=
\sum_{\Ga}\frac{\bfm(\Ga)}{\ell(\Ga)!}
\Phi_{\Ga*}\De^{!}\big( 
\big[\ov\fM(X,\Ga_X)\big]^{\vir}\!\times\!\vph_*\big[\ov\fM(\P_XV,\Ga_V)^{\vir}\big]\big),\EE
with the sum now taken over bipartite graphs~$\Ga$ as in Section~\ref{SympSum_subs}
with  $A_v\!=\!0$ for all $v\!\in\!\Ga_V$.
For graphs~$\Ga$, the restrictions~\eref{AReqcond_e3} imply 
the restrictions~\eref{AReqcond_e2} for all $(g,d)\!=\!(g_v,d_v)$ with $v\!\in\!\Ga_V$.
By Lemma~\ref{relGW_lmm}, the last term in~\eref{VCsplit_e2} thus vanishes except for 
the basic graph~$\Ga$ with $|\Ga_X|\!=\!1$,  $(g_v,A_v,k_v)\!=\!(0,0,0)$ for all $v\!\in\!\Ga_V$,
and all edge labels equal to~1, i.e.~as in the first diagram in Figure~\ref{bigraph_fig2}.
The summand in~\eref{VCsplit_e2} corresponding to this basic graph gives~\eref{AReq_e2}.
\end{proof}

\begin{rmk}\label{Jun_rmk}
The equality~\eref{faminc_e} is established in~\cite{Jun2} for a general flat degeneration
$\pi\!:\cZ\!\lra\!\De$, with~$\cZ_0$ consisting of two smooth varieties joined along
a smooth hypersurface, only after summing 
over all classes~$A$ 
of the same degree with respect to an ample line bundle over~$\cZ$.
However, in the given case, the relevant summands on the two sides of~\eref{faminc_e}
lie in different spaces and thus must be equal pairwise. 
\end{rmk}

\begin{rmk}\label{SympVC_rmk}
The conclusion of Corollary~\ref{VC_crl} should apply to any compact 
symplectic manifold~$(X,\om)$ and $(g,A)$-hollow symplectic hypersurface~$V$.
Unfortunately, the above proof of Corollary~\ref{VC_crl} makes use of 
the symplectic sum (degeneration) formula for virtual fundamental cycles
(not just numbers) in GW-theory,
which is not even claimed in the symplectic category in any work we are aware~of.
In particular, \cite{IPsum} is concerned only with equating GW-invariants
(viewed as numbers), contrary to the claim just above \cite[(11.4)]{IPvfc}.
\end{rmk}

\section{Details on the counter-examples}
\label{eg_sec}

\noindent
In Sections~\ref{deg0eg_subs}-\ref{P4eg_subs}, 
we establish the claims made in Examples~\ref{deg0_eg}-\ref{P4_eg},
respectively; see Section~\ref{intro_sec}.
In the case of Example~\ref{deg0_eg}, we give two computations of
the relative invariants.
In the cases of Examples~\ref{P1_eg} and~\ref{P4_eg},
we include localization computations of the $\de\!=\!0,1$ numbers
as consistency checks;
the localization computations for Example~\ref{P4_eg}
are separated off into Section~\ref{LocComp_subs}.\\

\noindent
In Sections~\ref{P1eg_subs}-\ref{LocComp_subs}, we use some degree~1 relative GW-invariants
of $(\P^1,\i)$ and rubber relative invariants of $(\P^1,\i,0)$ with respect to 
the standard $\C^*$-action.
In principle, all such invariants are computed in \cite{FaberP,OP}.
As it is not completely trivial to extract actual numbers from 
the generating series in \cite{FaberP,OP}, 
we include alternative computations for the few numbers relevant to our purposes.

\subsection{Genus 1 degree 0 invariants}
\label{deg0eg_subs}

\noindent
Let $(X,\om,V)$ be as in Example~\ref{deg0_eg}.
Fix an $\om$-tame almost complex structure~$J$ on~$X$ so that $J(TV)\!=\!TV$
and the Nijenhuis condition~\eref{NijenCond_e} holds.\\

\noindent
For a complex structure~$\fj$ on a smooth 1-marked genus~1 Riemann surface 
$(\Si,x_1)$, the space of degree~0 holomorphic maps $u\!:\Si\!\lra\!X$
consists of the constant maps and so is canonically isomorphic to~$X$.
The obstruction bundle (i.e.~the bundle of the cokernels of the linearizations~$D_{J,u}$
of the $\dbar_{J,\fj}$-operator at~$u$) is isomorphic to 
$\cH^{0,1}_{\fj}\!\otimes_{\C}\!TX$,
where $\cH^{0,1}_{\fj}$ is the complex one-dimensional space
of anti-holomorphic one-forms on~$\Si$.
Thus,
\BE{g1Obs_e}TX\approx \Obs\lra \Hol_{\fj}(X,0)\approx X.\EE
By definition, the 1-marked genus~1 degree~0 fixed $\fj$ absolute GW-invariant
with primary insertion $1\!\in\!H^*(X)$ is the (signed) number of solutions
$u\!:\Si\!\lra\!X$ of
\BE{Jnueq_e}\dbar_{J,\fj}u\big|_z=\nu\big(z,u(z)\big)\quad\forall~z\in\Si, \qquad
u_*[\Si]=0\in H_2(X;\Z),\EE
for a generic element
$$\nu\in \Ga_{\fj}(X,J)\equiv\Ga\big(\Si\!\times\!X,(T^*\Si)^{0,1}\!\otimes_{\C}\!TX\big).$$
The projection~$\bar\nu$ of this element to the cokernel of $D_{J,u}$ for 
each $u\!\in\!\Hol_{\fj}(X,0)$ induces a transverse section of 
the obstruction bundle~\eref{g1Obs_e}.
The solutions of~\eref{Jnueq_e} correspond to the zeros of~$\bar\nu$,
as the obstruction to solving~\eref{Jnueq_e} vanishes at these points.
Thus, the number of solutions of~\eref{Jnueq_e} is
$$\blr{e(\Obs),\Hol_{\fj}(X,0)}=\blr{c_1(TX),X}=\chi(X).$$
If $\fj\!\in\!H^2(\ov\cM_{1,1})$ is the Poincare dual of a generic element,
the absolute GW-invariant $\GW_{1,0}^X(\fj;1)$ is half this number,
because the group of automorphisms of a generic element of~$\ov\cM_{1,1}$ is~$\Z_2$.
This establishes the first equality in~\eref{deg0_e1}.\\

\noindent
For maps of degree $A\!=\!0$, $A\!\cdot\!V\!=\!0$ and so the only compatible contact vector
is the length~0 vector, which we denote by~$()$.
By definition, the 1-marked genus~1 degree~0 fixed $\fj$ GW-invariant relative to~$V$ 
with contact vector~$()$ and primary insertion $1\!\in\!H^*(X)$ is the number of 
solutions $u\!:\Si\!\lra\!X$ of
\BE{Jnueq_e2}\dbar_{J,\fj}u\big|_z=\nu\big(z,u(z)\big)\quad\forall~z\in\Si, \qquad
u_*[\Si]=0\in H_2(X;\Z), \qquad u(\Si)\not\subset V,\EE
for a generic $\nu\!\in\!\Ga_{\fj}^V(X,J)$, where
$$\Ga_{\fj}^V(X,J)\subset \Ga_{\fj}(X,J)$$
is the subspace of elements $\nu$ such~that
$$\nu|_{\Si\times V}\in  \Ga_{\fj}(V,J|_V), \quad
\wt\na_w\nu+J\wt\na_{Jw}\nu\in (T^*\Si)^{0,1}\!\otimes_{\C}\!T_xV 
\quad\forall~w\!\in\!T_xX,~x\!\in\!V.$$
By the first assumption above, the number of maps $u\!:\Si\!\lra\!V\!\subset\!X$
that satisfy the first two conditions in~\eref{Jnueq_e2} and fail the third is~$\chi(V)$.
The total number of  maps $u\!:\Si\!\lra\!X$
that satisfy the first two conditions in~\eref{Jnueq_e2} is $\chi(X)$,
as in the previous paragraph.
Thus, the number of solutions of~\eref{Jnueq_e2} is $\chi(X)\!-\!\chi(V)$. 
Similarly to the previous paragraph,
the relative GW-invariant $\GW_{1,0;()}^{X,V}(\fj;1)$ is half this number.
This establishes the second equality in~\eref{deg0_e1}.\\

\noindent
With $\al$ as in~\eref{deg0_e2}, let $Y\!\subset\!X$ be a generic representative
of the Poincare dual of~$\al$.
Since every degree~0 $J$-holomorphic map is constant,
$$\ov\fM_{1,1}(X,0)=\ov\cM_{1,1}\!\times\! X.$$
Similarly to the previously case, the obstruction bundle in this case is isomorphic~to 
\BE{g1Obs_e2}\Obs=\pi_1^*\E^*\!\otimes\!\pi_2^*TX\lra \ov\cM_{1,1}\!\times\!X,\EE
where $\E\!\lra\!\ov\cM_{1,1}$ is the Hodge line bundle of holomorphic differentials.
Its first chern class, $\la\!\equiv\!c_1(\E)$, satisfies
\BE{M11la_e}\blr{\la,\ov\cM_{1,1}}=\frac{1}{24}\,.\EE
By definition, the 1-marked genus~1 degree~0 absolute GW-invariant
with primary insertion~$\al$ is the (signed) number of solutions
$u\!:(\Si,\fj)\!\lra\!X$ of
\BE{Jnueq_e3}\dbar_{J,\fj}u\big|_z=\nu\big(z,u(z)\big)\quad\forall~z\in\Si, \qquad
u_*[\Si]=0\in H_2(X;\Z),\qquad u(x_1)\in Y,\EE
where $\Si$ is a smooth torus, $x_1\!\in\!\Si$ is the marked point,
$\nu$ is a generic element of
$$\Ga_{1,1}(X,J)\equiv\Ga\big(\cU_{1,1}\!\times\!X,(T^*\cU_{1,1})^{0,1}\!\otimes_{\C}\!TX\big),$$
and $\cU_{1,1}\!\lra\!\ov\cM_{1,1}$ is the universal curve.
Similarly to the case considered above, $\nu$ induces a transverse section~$\bar\nu$
of the obstruction bundle~\eref{g1Obs_e2}.
The solutions of~\eref{Jnueq_e3} correspond to the zeros of~$\bar\nu$
with $u(x_1)\!\in\!Y$.
Thus, 
\BE{GWg1X_e}\begin{split}
\GW_{1,0}^X(\al)&=\blr{e(\Obs),\ov\cM_{1,1}\!\times\!Y}
=-\blr{\la,\ov\cM_{1,1}}\blr{c_{n-1}(X),Y}\\
&=-\frac{1}{24}\blr{\al\,c_{n-1}(X),X}\,.
\end{split}\EE
This establishes the first equality in~\eref{deg0_e2}.\\

\noindent
By definition, the 1-marked genus~1 degree~0 GW-invariant
relative to~$V$ with contact vector~$()$ and
primary insertion~$\al$ is the number of solutions
$u\!:(\Si,\fj)\!\lra\!X$ of
\BE{Jnueq_e4}\dbar_{J,\fj}u\big|_z=\nu\big(z,u(z)\big)\quad\forall~z\in\Si, \quad
u_*[\Si]=0\in H_2(X;\Z),\quad u(x_1)\in Y,\quad u(\Si)\not\subset V,\EE
for a generic $\nu\!\in\!\Ga_{1,1}^V(X,J)$, where
$\Ga_{1,1}^V(X,J)\!\subset\!\Ga_{1,1}(X,J)$
is the subspace of elements $\nu$ satisfying~\eref{nuVrestr_e}.
By the first assumption in~\eref{nuVrestr_e} and previous paragraph, 
the number of maps $u\!:\Si\!\lra\!V\!\subset\!X$
that satisfy the first three conditions in~\eref{Jnueq_e4} and fail the fourth~is
\BE{GWg1V_e}\GW_{1,0}^V(\PD_V(V\!\cap\!Y))
=-\frac{1}{24}\blr{\PD_V(V\!\cap\!Y)\,c_{n-2}(Y),Y}
=-\frac{1}{24}\blr{\al|_V\,c_{n-2}(Y),Y}\,.\EE
Since the total number of  maps $u\!:\Si\!\lra\!X$
that satisfy the first three conditions in~\eref{Jnueq_e4} is $\GW_{1,0}^X(\al)$,
$\GW_{1,0;()}^{X,V}(\al)$ is the difference of~\eref{GWg1X_e} and~\eref{GWg1V_e},
as claimed in the second equality in~\eref{deg0_e2}.

\begin{rmk}\label{g1_rmk}
Strictly speaking, the arguments in the last two paragraphs should be applied
to the universal curve~$\chU_{1,1}$ over the moduli space $\chM_{1,1}$
of 1-marked genus~1 curves with Prym structures in place of~$\cU_{1,1}$
and the resulting numbers should then be divided by the order of 
the covering~\eref{PrymCov_e} with $(g,k)\!=\!(1,1)$.
This nuance is taken into account by $\E\!\lra\!\ov\cM_{1,1}$ being
a line orbi-bundle over an orbifold with the chern class given by~\eref{M11la_e}.
\end{rmk}

\noindent
The absolute invariant $\GW_{1,0}^X(\fj;1)$ can also be computed using 
the same framework as $\GW_{1,0}^X(\al)$.
If $\si\!\in\!\ov\cM_{1,1}$ represents the Poincare dual of~$\fj$,
\eref{GWg1X_e} becomes
$$\GW_{1,0}^X(\fj;1)=\blr{e(\Obs),[\si]\!\times\!X}
=\blr{1,[\si]}\blr{c_n(X),X}=\frac12 \chi(X).$$
Below we recall a similar framework for computing the relative invariants
in the algebraic category, based on \cite[Section~8]{Jun3},
and note that it applies equally well in the symplectic category.\\

\noindent
If $X$ is a complex manifold and $V\!\subset\!X$ is a complex submanifold,
the sheaf $\cO_X(TX)$ of holomorphic vector fields contains
the subsheaf $\cO_X(TX(-\log V))$ of vector fields with values in~$TV$ 
along~$V$.
If $(z_1,\ldots,z_n)$ is a  coordinate chart on $U\!\subset\!X$ such that 
$U\!\cap\!V$ is the slice $z_n\!=\!0$,
$\cO_U(TX(-\log V))$ is freely generated by the vector fields
$$\frac{\partial}{\partial z_1},\,\ldots,\,\frac{\partial}{\partial z_{n-1}},\,
z_n\frac{\partial}{\partial z_n}\,.$$
Thus, $\cO_X(TX(-\log V))$ is a locally free sheaf of rank~$n$,
i.e.~the sheaf of a holomorphic sections of a holomorphic vector bundle $TX(-\log V)$ of rank~$n$.
In the symplectic category, such a vector bundle can be constructed using 
the Symplectic Neighborhood Theorem \cite[Theorem~3.30]{MS1};
the resulting complex vector bundle is well-defined up to equivalence 
by the deformation equivalence class of~$\om$ as a symplectic form on~$X\!\supset\!V$.

\begin{lmm}\label{TXlogV_lmm}
Suppose $(X,\om)$ is a compact symplectic manifold of real dimension~$2n$ and 
$V\!\subset\!X$ is a compact symplectic manifold.
If $\al\!\in\!H^{2(n-k)}(X)$, then
$$\blr{\al\,c_k(TX(-\log V)),X}=\blr{\al\,c_k(X),X}-\blr{\al\,c_{k-1}(V),V}\,.$$
\end{lmm}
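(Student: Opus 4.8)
The plan is to reduce the identity to two short exact sequences — one on $X$ relating $TX(-\log V)$, $TX$, and the pushforward of the normal bundle $\cN_XV$, and one on $V$ relating $TX(-\log V)|_V$ to $TV$ — together with the Whitney formula and the projection formula. When $X$ is a complex manifold and $V$ a smooth divisor, $TX(-\log V)$ is the kernel of the composite $TX\!\to\!TX|_V\!\to\!\cN_XV$, so there is an exact sequence of sheaves $0\lra TX(-\log V)\lra TX\lra \io_*\cN_XV\lra 0$, where $\io\colon V\hookrightarrow X$ is the inclusion. Writing $L$ for the complex line bundle carrying a section $s$ that vanishes transversally along $V$ — so that $c_1(L)\!=\!\PD[V]$ and $\io^*L\!=\!\cN_XV$ — the Koszul resolution $0\lra L^{-1}\lra\cO_X\lra \io_*\cO_V\lra0$, with first map multiplication by $s$, gives $[\io_*\cO_V]=[\cO_X]-[L^{-1}]$ in $K^0(X)$, hence $[\io_*\cN_XV]=[L]\!\cdot\![\io_*\cO_V]=[L]-[\cO_X]$ by the projection formula. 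Applying the total Chern class to the displayed sequence, viewed as an identity in $K^0(X)$, therefore yields
\[
c\big(TX(-\log V)\big)\cdot\big(1+\PD[V]\big)=c(TX),
\]
whose degree-$2k$ component is $c_k(TX)=c_k\big(TX(-\log V)\big)+\PD[V]\cdot c_{k-1}\big(TX(-\log V)\big)$.

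Next I would compute $TX(-\log V)|_V$: restricting the bundle map $TX(-\log V)\!\hookrightarrow\!TX$ to $V$ gives an exact sequence $0\lra\cO_V\lra TX(-\log V)|_V\lra TV\lra0$ — in a chart with $V\!=\!\{z_n\!=\!0\}$ the trivial sub is generated by $z_n\partial_{z_n}|_V$ and the quotient is the image $TV\subset TX|_V$ — so $c\big(TX(-\log V)|_V\big)=c(TV)$ and in particular $c_{k-1}\big(TX(-\log V)\big)|_V=c_{k-1}(TV)$. Substituting this into the previous relation and using the projection formula $\PD[V]\!\cdot\!\beta=\io_*(\beta|_V)$ gives $c_k\big(TX(-\log V)\big)=c_k(TX)-\io_*\big(c_{k-1}(TV)\big)$. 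Capping against $\al$, applying the projection formula once more to rewrite $\al\cdot\io_*\big(c_{k-1}(TV)\big)=\io_*\big(\al|_V\,c_{k-1}(TV)\big)$, and evaluating on fundamental classes then yields
\[
\blr{\al\,c_k\big(TX(-\log V)\big),X}=\blr{\al\,c_k(X),X}-\blr{\al|_V\,c_{k-1}(V),V},
\]
which is the asserted identity.

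The step that requires care, and which I expect to be the main obstacle, is the passage to the symplectic category, where $TX(-\log V)$ is only a complex vector bundle — well-defined up to isomorphism by the deformation class of~$\om$ via the Symplectic Neighborhood Theorem — rather than a holomorphic one, so the sheaf sequences above are not literally available. This is handled by observing that the two inputs used above, namely $c\big(TX(-\log V)\big)\!\cdot\!\big(1+\PD[V]\big)=c(TX)$ and $c\big(TX(-\log V)|_V\big)=c(TV)$, are purely topological statements about complex vector bundles over $X$ and $V$, and can be verified in the explicit model furnished by the Symplectic Neighborhood Theorem: over a tubular neighborhood of $V$, identified with a neighborhood of the zero section in the complex line bundle $\cN_XV$, a choice of connection exhibits $TX$ as $\pi^*TV\oplus\pi^*\cN_XV$ while $TX(-\log V)$ is the subbundle $\pi^*TV$ plus the trivial line bundle generated by the fiberwise Euler vector field $w\partial_w$ (equivalently $\pi^*\cN_XV$ twisted down by $\cO(\text{zero section})$ and trivialized by the tautological section), and $TX(-\log V)=TX$ on the complement of $V$. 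Gluing these two descriptions and invoking the standard Gysin/Thom-class identity $\io_{\,!}(1)=[\cO_X]-[L^{-1}]$ for the codimension-two embedding $\io$ reproduces both required identities; the remainder of the argument — Whitney's formula, the projection formula, and Poincar\'e duality — is then formal.
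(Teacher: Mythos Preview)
Your proof is correct and follows essentially the same route as the paper: both use the short exact sequence $0\to TX(-\log V)\to TX\to\io_*\cN_XV\to0$ together with the Koszul/line-bundle sequence to obtain $c\big(TX(-\log V)\big)\cdot\big(1+\PD_X[V]\big)=c(TX)$. The only difference is in the last step: the paper expands $(1+\PD_X[V])^{-1}$ as a geometric series and then collapses the resulting alternating sum $\sum_{i\ge0}(-1)^i c_{k-1-i}(X)\,c_1(\cN_XV)^i\big|_V$ using the adjunction relation $c(V)=c(X)|_V\,c(\cN_XV)^{-1}$, whereas you avoid the series altogether by invoking the residue sequence $0\to\cO_V\to TX(-\log V)|_V\to TV\to0$ to read off $c_{k-1}\big(TX(-\log V)\big)\big|_V=c_{k-1}(V)$ directly. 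Your finish is a bit cleaner; the paper's has the virtue of staying entirely on~$X$ until the very end. Both are standard and equivalent.
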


\begin{proof}
By definition, there is a short exact sequence of sheaves
\BE{TXlogV_e1}
0\lra \cO_X(TX(-\log V))\lra \cO_X(TX)\lra \cO_V(V)\lra 0,\EE
where the second non-trivial homomorphism is the restriction to~$V$ followed 
by the projection to the normal bundle~$\cN_XV$,
which equals to the restriction of the line bundle $\cO_X(V)$ to~$V$.
Combining~\eref{TXlogV_e1} with the short exact sequence
$$0\lra \cO_X\lra \cO_X(V)\lra \cO_V(V)\lra 0,$$
we find that
\begin{equation*}\begin{split}
c\big(\cO_X(TX(-\log V))\big)
&=c\big(\cO_X(TX)\big)\,c\big(\cO_V(V)\big)^{-1}
=c\big(\cO_X(TX)\big)\,\big(c(\cO_X(V))c(\cO_X)^{-1}\big)^{-1}\\
&=c(X)\,\big(1\!+\!\PD_X(V)\big)^{-1}\,.
\end{split}\end{equation*}
Thus,
\BE{TXlogV_e5}\begin{split}
\blr{\al\,c_k(TX(-\log V)),X}
&=\blr{\al\,c_k(X),X}-\sum_{i=0}^{k-1}(-1)^i\blr{\al\,c_{k-1-i}(X)\,(\PD_XV)^{1+i},X}\\
&=\blr{\al\,c_k(X),X}-
\sum_{i=0}^{k-1}(-1)^i\blr{\al\,c_{k-1-i}(X)\,c_1(\cN_XV)^i,V}\,.
\end{split}\EE
Since $c(V)=c(TX)|_Vc(\cN_XV)^{-1}$, the claim follows from~\eref{TXlogV_e5}.
\end{proof}

\noindent 
In the projective setting, the analogue of the obstruction bundle~\eref{g1Obs_e2} 
for the relative moduli space~is
$$\Obs^V=\pi_1^*\E^*\!\otimes\!\pi_2^*TX(-\log V)\lra \ov\cM_{1,1}\!\times\!X\,;$$
see \cite[Section~8]{Jun3}.
In the symplectic setting, the substance of the first restriction in~\eref{nuVrestr_e}
is that~$\nu$ induces a section of~$\Obs^V$.
If $\nu$ is generic, subject to the conditions in~\eref{nuVrestr_e},
this section is transverse to the zero set everywhere and when restricted 
to $\ov\cM_{1,1}\!\times\!V$.
Thus, it has no zeros along~$V$ and the two relative invariants in Example~\ref{deg0_eg}
are given~by
\begin{equation*}\begin{split}
\GW_{1,0;()}^{X,V}(\fj;1)&=\blr{e(\Obs^V),[\si]\!\times\!X}
=\blr{1,[\si]}\blr{c_n(TX(-\log V)),X}\,\\
\GW_{1,0;()}^{X,V}(\al)&=\blr{e(\Obs^V),\ov\cM_{1,1}\!\times\!Y}
=-\blr{\la,\ov\cM_{1,1}}\blr{c_{n-1}(TX(-\log V)),Y}.
\end{split}\end{equation*}
The second equalities in~\eref{deg0_e1} and~\eref{deg0_e2} now follow 
from Lemma~\ref{TXlogV_lmm} and~\eref{M11la_e}.\\

\begin{figure}
\begin{pspicture}(0,0.7)(11,3.2)
\psset{unit=.3cm}
\psline[linewidth=.1](10,6)(22,6)\rput(9.5,9){$X$}\rput(8.8,3){$\P_XV$}
\rput(9.5,6.7){\sm{$V$}}\rput(8.7,5.1){\sm{$\P_{X,0}V$}}
\pscircle*(16,9){.25}\rput(18,9){\sm{$(1,0)$}}
\psline[linewidth=.1](32,6)(44,6)\rput(31.5,9){$X$}\rput(30.8,3){$\P_XV$}
\rput(31.5,6.7){\sm{$V$}}\rput(30.7,5.1){\sm{$\P_{X,0}V$}}
\pscircle*(38,3){.25}\rput(40,3){\sm{$(1,0)$}}
\end{pspicture}
\caption{The two bipartite graphs~$\Ga$ contributing to the genus~1 degree~0 GW-invariants 
of $X$ via the symplectic sum formula applied to~\eref{Xdecomp_e}.}
\label{deg0_fig}
\end{figure}
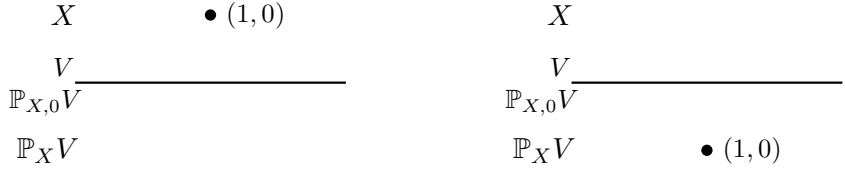

\noindent
Finally, we note that the two pairs of the GW-invariants in Example~\ref{deg0_eg}
are consistent with the symplectic sum formula as stated in the second-to-last equation
on page~201 in~\cite{Jun2} and applied to the decomposition~\eref{Xdecomp_e}.
Since the degree $A\!=\!0$ in this case, there are only two types of bipartite graphs~$\Ga$
as in Section~\ref{SympSum_subs} to sum~over: the two possible one-vertex graphs;
see Figure~\ref{deg0_fig}.
The symplectic sum formula in these cases gives
\begin{alignat}{1}
\label{deg0sum_e1}
\GW_{1,0}^X(\fj;1)&=\GW_{1,0;()}^{X,V}(\fj;1)+\GW_{1,0;()}^{\P_XV,\P_{X,\i}V}(\fj;1),\\
\label{deg0sum_e2}
\GW_{1,0}^X(\al)&=\GW_{1,0;()}^{X,V}(\al)+
\GW_{1,0;()}^{\P_XV,\P_{X,\i}V}\big(\pi_{X,V}^*(\al|_V)\big),
\end{alignat}
with $\P_{X,\i}V\!\subset\!\P_XV$ and $\pi_{X,V}\!:\P_{X,\i}V\!\lra\!V$
as in~\eref{PXVdfn_e} and~\eref{PXVsecdfn_e}.
According to the second equality in~\eref{deg0_e1},
\begin{equation*}\begin{split}
\GW_{1,0;()}^{X,V}(\fj;1)&=\frac{\chi(X)-\chi(V)}{2}, \\
\GW_{1,0;()}^{\P_XV,\P_{X,\i}V}(\fj;1)&=\frac{\chi(\P_XV)-\chi(\P_{X,\i}V)}{2}
=\frac{2\chi(V)-\chi(V)}{2}\,.
\end{split}\end{equation*}
Thus, \eref{deg0sum_e1} is consistent with the first equality in~\eref{deg0_e1}.
According to the second equality in~\eref{deg0_e2},
\begin{equation*}\begin{split}
\GW_{1,0;()}^{X,V}(\al)&=-\frac{\lr{\al\,c_{n-1}(X),X}-\lr{\al|_V\,c_{n-2}(V),V}}{24},\\
\GW_{1,0;()}^{\P_XV,\P_{X,\i}V}\big(\pi_{X,V}^*(\al|_V)\big)&=
-\frac{\lr{\pi_{X,V}^*(\al|_V)\,c_{n-1}(\P_XV),\P_XV}
-\lr{\al|_V\,c_{n-2}(V),V}}{24}\\
&=\frac{2\,\lr{\al|_V\,c_{n-2}(V),V}
-\lr{\al|_V\,c_{n-2}(V),V}}{24}\,.
\end{split}\end{equation*}
Thus, \eref{deg0sum_e2} is consistent with the first equality in~\eref{deg0_e2}.

\subsection{Genus 2 degree 1 invariants of~$\P^1$}
\label{P1eg_subs}

\noindent
We establish the second equality in~\eref{P1_e} by applying the symplectic sum formula, as stated 
in the second-to-last equation on page~201 in~\cite{Jun2}, to 
the absolute GW-invariant in~\eref{P1_e} via the decomposition~\eref{Xdecomp_e} with
$$X=\P^1, \qquad V=V_{\de}\equiv\{p_1,\ldots,p_{\de}\}, \qquad
\P_XV=\{1,\ldots,d\}\times\P^1\,.$$ 
We will make use of some top intersection numbers on the Deligne-Mumford spaces
$\ov\cM_2$ and $\ov\cM_{2,1}$, as summarized in Tables~\ref{M2and3_tbl} 
and~\ref{M21_tbl}.
The numbers for $\ov\cM_3$ and $\ov\cM_{3,1}$, appearing in Tables~\ref{M2and3_tbl} 
and~\ref{M31_tbl}, will be used in Section~\ref{P4eg_subs}.
These numbers can be obtained from C.~Faber's computer program,
which implements the formula described in~\cite{Faber}.\\

\begin{figure}
\begin{pspicture}(0,0)(11,4)
\psset{unit=.3cm}
\rput(9.5,9){$\P^1$}\rput(8.5,3){$\{1\}\times\!\P^1$}\rput(23.5,3){$\{7\}\times\!\P^1$}
\rput(10.7,6){\sm{$p_1$}}\rput(21.4,6){\sm{$p_7$}}
\pscircle*(16,9){.25}
\psline[linewidth=.05](16,9)(15,10.5)\psline[linewidth=.05](16,9)(17,10.5)
\rput(15,11.2){\sm{1}}\rput(17,11.2){\sm{2}}\rput(18.2,9){\sm{$(2,1)$}}
\pscircle*(11.5,6){.2}\pscircle*(13,6){.2}\pscircle*(14.5,6){.2}\pscircle*(16,6){.2}
\pscircle*(17.5,6){.2}\pscircle*(19,6){.2}\pscircle*(20.5,6){.2}
\psline[linewidth=.05](16,9)(11.5,6)\psline[linewidth=.05](16,9)(13,6)
\psline[linewidth=.05](16,9)(14.5,6)\psline[linewidth=.05](16,9)(16,6)
\psline[linewidth=.05](16,9)(17.5,6)\psline[linewidth=.05](16,9)(19,6)
\psline[linewidth=.05](16,9)(20.5,6)
\psline[linewidth=.05](11.5,6)(11.5,3)\psline[linewidth=.05](13,6)(13,3)
\psline[linewidth=.05](14.5,6)(14.5,3)\psline[linewidth=.05](16,6)(16,3)
\psline[linewidth=.05](17.5,6)(17.5,3)\psline[linewidth=.05](19,6)(19,3)
\psline[linewidth=.05](20.5,6)(20.5,3)
\pscircle*(11.5,3){.25}\pscircle*(13,3){.25}\pscircle*(14.5,3){.25}\pscircle*(16,3){.25}
\pscircle*(17.5,3){.25}\pscircle*(19,3){.25}\pscircle*(20.5,3){.25}
\rput(16,0){\rnode{A}{\sm{$(0,0)$}}}
\pnode(11.5,3){B1}\pnode(13,3){B2}\pnode(14.5,3){B3}\pnode(16,3){B4}
\pnode(17.5,3){B5}\pnode(19,3){B6}\pnode(20.5,3){B7}
\ncline[linewidth=.03,nodesep=3pt]{->}{A}{B1}
\ncline[linewidth=.03,nodesep=3pt]{->}{A}{B2}
\ncline[linewidth=.03,nodesep=3pt]{->}{A}{B3}
\ncline[linewidth=.03,nodesep=3pt]{->}{A}{B4}
\ncline[linewidth=.03,nodesep=3pt]{->}{A}{B5}
\ncline[linewidth=.03,nodesep=3pt]{->}{A}{B6}
\ncline[linewidth=.03,nodesep=3pt]{->}{A}{B7}
\rput(31.5,9){$\P^1$}\rput(30.5,3){$\{1\}\times\!\P^1$}\rput(45.5,3){$\{7\}\times\!\P^1$}
\rput(32.7,6){\sm{$p_1$}}\rput(43.4,6){\sm{$p_7$}}
\pscircle*(38,9){.25}
\psline[linewidth=.05](38,9)(37,10.5)\psline[linewidth=.05](38,9)(39,10.5)
\rput(37,11.2){\sm{1}}\rput(39,11.2){\sm{2}}\rput(40.2,9){\sm{$(0,1)$}}
\pscircle*(33.5,6){.2}\pscircle*(35,6){.2}\pscircle*(36.5,6){.2}\pscircle*(38,6){.2}
\pscircle*(39.5,6){.2}\pscircle*(41,6){.2}\pscircle*(42.5,6){.2}
\psline[linewidth=.05](38,9)(33.5,6)\psline[linewidth=.05](38,9)(35,6)
\psline[linewidth=.05](38,9)(36.5,6)\psline[linewidth=.05](38,9)(38,6)
\psline[linewidth=.05](38,9)(39.5,6)\psline[linewidth=.05](38,9)(41,6)
\psline[linewidth=.05](38,9)(42.5,6)
\psline[linewidth=.05](33.5,6)(33.5,3)\psline[linewidth=.05](35,6)(35,3)
\psline[linewidth=.05](36.5,6)(36.5,3)\psline[linewidth=.05](38,6)(38,3)
\psline[linewidth=.05](39.5,6)(39.5,3)\psline[linewidth=.05](41,6)(41,3)
\psline[linewidth=.05](42.5,6)(42.5,3)
\pscircle*(33.5,3){.25}\pscircle*(35,3){.25}\pscircle*(36.5,3){.25}\pscircle*(38,3){.25}
\pscircle*(39.5,3){.25}\pscircle*(41,3){.25}\pscircle*(42.5,3){.25}
\rput(37.25,0){\rnode{A}{\sm{$(0,0)$}}}\rput(42.5,0){\rnode{A2}{\sm{$(2,0)$}}}
\pnode(33.5,3){B1}\pnode(35,3){B2}\pnode(36.5,3){B3}\pnode(38,3){B4}
\pnode(39.5,3){B5}\pnode(41,3){B6}\pnode(42.5,3){B7}
\ncline[linewidth=.03,nodesep=3pt]{->}{A}{B1}
\ncline[linewidth=.03,nodesep=3pt]{->}{A}{B2}
\ncline[linewidth=.03,nodesep=3pt]{->}{A}{B3}
\ncline[linewidth=.03,nodesep=3pt]{->}{A}{B4}
\ncline[linewidth=.03,nodesep=3pt]{->}{A}{B5}
\ncline[linewidth=.03,nodesep=3pt]{->}{A}{B6}
\ncline[linewidth=.03,nodesep=3pt]{->}{A2}{B7}
\end{pspicture}
\caption{Bipartite graphs~$\Ga$ that contribute to the absolute GW-invariant in~\eref{P1_e}
via the symplectic sum decomposition with respect to~$V_7\!=\!\{p_1,\ldots,p_7\}$.}
\label{P1eg_fig}
\end{figure}

\noindent
Since the Poincare duals of the two primary insertions in~\eref{P1_e}
vanish on the divisor~$V_{\de}$
(the constraining points can be chosen to be distinct from the $\de$ points in~$V_{\de}$),
$k_v\!=\!0$ for all $v\!\in\!\Ga_V$ (the marked points stay on the $X$-side)
if $\Ga$ is a bipartite graph as in Section~\ref{SympSum_subs}
contributing  to the absolute GW-invariant in~\eref{P1_e}.
Furthermore, $A_v\!=\!1$ for the unique vertex $v\!\in\!\Ga_X$ 
and all edge labels are~1 in this case
(because the curve on the $X$-side is of degree~1 and so meets each 
point in the divisor with order~1).
By Section~\ref{SympSum_subs} or Lemma~\ref{relGW_lmm} (separately), 
there are thus only two types of graphs~$\Ga$ contributing to 
the absolute GW-invariant in~\eref{P1_e}: 
\begin{enumerate}[label=(\arabic*),leftmargin=*]

\item $(g_v,A_v,k_v)\!=\!(0,0,0)$ for all $v\!\in\!\Ga_V$ and 

\item  the $\de$ graphs with $(g_v,A_v,k_v)\!=\!(2,0,0)$ for
one element $v\!\in\!\Ga_V$ and
$(g_v,A_v,k_v)\!=\!(0,0,0)$ for the remaining $\de\!-\!1$ elements $v\!\in\!\Ga_V$; 

\end{enumerate}
see Figure~\ref{P1eg_fig}.
There are other bipartite graphs $\Ga$, but they all contain a vertex $v\!\in\!\Ga_V$
with $g_v\!=\!1$;
by Section~\ref{SympSum_subs} or Lemma~\ref{relGW_lmm}, such a graph does not contribute
to an absolute GW-invariant via the symplectic sum formula.
In the setup of Section~\ref{DirComp_subs}, such  graphs correspond to configurations with genus~1
components sinking into the divisor; since there are no higher-genus components sinking
into the divisor in this case, the argument in Section~\ref{DirComp_subs} also implies that
such a configuration does not contribute.\\

\noindent
Thus, by the symplectic sum formula,
\BE{P1eg_e}
\GW_{2,1}^{\P^1}(\ka^4;\pt,\pt)
=\frac{1}{\de!}\GW_{2,1;\1_{\de}}^{\P^1,V_{}}(\ka^4;\pt,\pt)
+\frac{\de}{\de!}\sum_i
\GW_{0,1;\1_{\de}}^{\P^1,V_{\de}}(\ka_i;\pt,\pt)
\GW_{2,1;(1)}^{\P^1,\pt}(\ka_i';1)\,,
\EE
with $\ka_i\!\in\!H^*(\ov\cM_{0,2+\de})$ and $\ka_i'\!\in\!H^*(\ov\cM_{2,1})$ given~by
$$\gl^*\ka^4=\sum_i \ka_i\otimes\ka_i'
\in H^*(\ov\cM_{0,2+\de})\otimes H^*(\ov\cM_{2,1}) = H^*(\ov\cM_{0,2+\de}\!\times\!\ov\cM_{2,1}),$$
where 
$$\gl\!:\ov\cM_{0,2+\de}\!\times\!\ov\cM_{2,1}\lra \ov\cM_{2,2}\,,$$
is the morphism obtained by forgetting the last $\de\!-\!1$ points on the genus~0 curve
and identifying the marked point of the genus~2 curve
with the third marked point on the genus~0 curve.
Since $\ka$ is the Poincare dual of the divisor represented by the bottom right diagram
in Figure~\ref{st_fig}, it follows~that 
\BE{ka4pullback_e}\sum_i \ka_i\otimes\ka_i'\equiv \gl^*\ka^4 = 1\otimes\psi_1^4,\EE
where $\psi_1\!\in\!H^*(\ov\cM_{2,1})$ is the chern class of 
the universal cotangent line bundle.
By Theorem~\ref{main_thm},
\BE{P1eg_e2}
\frac{1}{\de!}\GW_{0,1;\1_{\de}}^{\P^1,V_{\de}}(1;\pt,\pt)
\equiv \frac{1}{\de!}\GW_{0,1;\1_{\de}}^{\P^1,V_{\de}}(\pt,\pt)
=\GW_{0,1}^{\P^1}(\pt,\pt)=1.\EE
Combining~\eref{P1eg_e} with~\eref{ka4pullback_e}, \eref{P1eg_e2}, 
and Lemma~\ref{M21_lmm} below,
we conclude that 
$$\GW_{2,1}^{\P^1}(\ka^4;\pt,\pt)
=\frac{1}{\de!}\GW_{2,1;\1_{\de}}^{\P^1,V_{\de}}(\ka^4;\pt,\pt)
+\de\blr{\psi_1^4,\ov\cM_{2,1}}\,.$$ 
The second equality in~\eref{P1_e} now follows from the first column in Table~\ref{M21_tbl}.

\begin{table}
\begin{center}
\begin{tabular}{||c|c||cc||c|c|c|c|c|c|c|c||}
\hhline{==~~=======}
$\la_1^3$& $\la_1\la_2$&&& $\la_1^6$& $\la_1^4\la_2$& $\la_1^3\la_3$& 
$\la_1^2\la_2^2$& $\la_1\la_2\la_3$& $\la_2^3$&  $\la_3^2$\\
\cline{1-2}\cline{5-11}
$\frac{1}{2880}$& $\frac{1}{5760}$&&& $\frac{1}{90720}$& $\frac{1}{181440}$&
$\frac{1}{725760}$& $\frac{1}{362880}$&  $\frac{1}{1451520}$& $\frac{1}{725760}$& 0\\
\hhline{==~~=======}
\end{tabular}
\end{center}
\caption{The top intersections of $\la$-classes on $\ov\cM_2$ and $\ov\cM_3$.}
\label{M2and3_tbl}
\end{table}

\begin{table}
\begin{center}
\begin{tabular}{||c|c|c|c|c|c||}
\hline\hline
$\psi_1^4$& $\psi_1^3\la_1$& $\psi_1^2\la_1^2$& $\psi_1^2\la_2$& $\psi_1\la_1^3$& 
$\psi_1\la_1\la_2$\\
\hline
$\frac{1}{1152}$& $\frac{1}{480}$& $\frac{7}{2880}$& $\frac{7}{5760}$&
$\frac{1}{1440}$& $\frac{1}{2880}$\\
\hline\hline
\end{tabular}
\end{center}
\caption{The top intersections of $\la$-classes and $\psi_1$ on $\ov\cM_{2,1}$.}
\label{M21_tbl}
\end{table}

\begin{lmm}[C.-C.~Liu]\label{M21_lmm}
If $\st\!:\ov\fM_{2,0;(1)}^{\pt}(\P^1,1)\!\lra\!\ov\cM_{2,1}$ is the forgetful morphism
dropping the map to~$\P^1$, then
\BE{M21_e}\st_*\big[\ov\fM_{2,0;(1)}^{\pt}(\P^1,1)\big]^{\vir}
=\big[\ov\cM_{2,1}\big].\EE\\
\end{lmm}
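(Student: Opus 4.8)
The plan is to exhibit $\st$ as, generically over $\ov\cM_{2,1}$, the projection of a $\P^1$-bundle, and to identify the single ``excess'' obstruction direction so that its push-forward is the fundamental class. Since $\dim^{\vir}\ov\fM_{2,0;(1)}^{\pt}(\P^1,1)=4=\dim\ov\cM_{2,1}$ by~\eref{virdim_e}, the class $\st_*[\ov\fM_{2,0;(1)}^{\pt}(\P^1,1)]^{\vir}$ lies in $H_8(\ov\cM_{2,1};\Q)\cong\Q$ and therefore equals $c\,[\ov\cM_{2,1}]$ for some $c\in\Q$; it thus suffices to show $c=1$, i.e.\ to evaluate the virtual class against one top-degree class on $\ov\cM_{2,1}$, equivalently to compute the virtual degree of a generic fiber of~$\st$.

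Next I would analyze the moduli space. A degree~$1$ stable map to~$\P^1$ has a distinguished domain component $C_0\cong\P^1$ carrying the entire degree and mapped isomorphically onto (the root of) the target, with every other component contracted. Generically the rest of the domain is a single smooth genus~$2$ curve $C'$ attached to~$C_0$ at one node and contracted to a point $p\in\P^1\setminus\{\pt\}$, while the relative marked point sits over~$\pt$ on~$C_0$. Forgetting the map and stabilizing collapses $C_0$ and identifies the relative marked point with the attaching node, so $\st$ records $(C',\mathrm{node})\in\ov\cM_{2,1}$; the remaining modulus is the position~$p$, which is compactified as $p\to\pt$ by a configuration with a rubber bubble over~$\pt$. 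Hence, away from the boundary strata of~$\ov\cM_{2,1}$, the morphism $\st$ is the projection of a $\P^1$-bundle $\P(\mathcal E)\to\ov\cM_{2,1}$.

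The heart of the argument is then the excess computation. Since the excess dimension is one, $[\ov\fM_{2,0;(1)}^{\pt}(\P^1,1)]^{\vir}$ is obtained from the $\P^1$-bundle (fundamental class) by capping with the Euler class of a rank-one obstruction bundle~$\Obs$, built from $R^1\pi_*f^*(T\P^1(-\log\pt))$ along the fibers, the contracted genus~$2$ component entering through its Hodge bundle. The assertion $c=1$ is equivalent to $c_1(\Obs)=\mathcal O_{\P(\mathcal E)}(1)+(\text{a class pulled back from }\ov\cM_{2,1})$: one then gets $\st_*\big(c_1(\Obs)\cap[\P(\mathcal E)]\big)=\st_*\big(\mathcal O_{\P(\mathcal E)}(1)\cap[\P(\mathcal E)]\big)=[\ov\cM_{2,1}]$, the pulled-back part dropping out for dimension reasons and the boundary corrections being supported on $\partial\ov\cM_{2,1}$ and hence invisible in the $[\ov\cM_{2,1}]$-component. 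I expect the precise identification of this line bundle --- tracking the relative/log twist and the behaviour of the obstruction across the $p\to\pt$ boundary where the rubber bubble appears --- to be the main obstacle.

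As a cross-check and an alternative route that sidesteps the explicit obstruction bundle, one can run $\C^*$-localization on $\ov\fM_{2,0;(1)}^{\pt}(\P^1,1)$ for the standard action on~$\P^1$ fixing $\pt$ and~$\infty$. The only fixed locus surjecting onto $\ov\cM_{2,1}$ is the one with the whole genus~$2$ component contracted over~$\infty$; it is isomorphic to $\ov\cM_{2,1}$ with $\st$ restricting to an isomorphism, and all other fixed loci map into $\partial\ov\cM_{2,1}$. Since $\st_*[\ov\fM_{2,0;(1)}^{\pt}(\P^1,1)]^{\vir}$ is a genuine top-dimensional class, only this locus contributes to its $[\ov\cM_{2,1}]$-component, and one checks that the corresponding virtual-normal-bundle factor contributes a unit. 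Consistency with the value $\langle\psi_1^4,\ov\cM_{2,1}\rangle=\frac1{1152}$ from Table~\ref{M21_tbl}, used in Section~\ref{P1eg_subs}, provides a further sanity check (and, together with Theorem~\ref{main_thm} and Lemma~\ref{relGW_lmm} being \emph{inapplicable} here because $(n-5)g(g-1)<0$, explains why a separate argument is needed).
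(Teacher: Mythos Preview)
Your two routes are close to the paper's two proofs, but each has a gap.

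In the obstruction-theoretic route, $R^1\pi_*f^*T\P^1(-\log\pt)$ has rank~$2$, not~$1$, on the main component: over a fiber of~$\st$ (genus~$2$ curve~$\Si$ contracted to $p\!\in\!\P^1$) it is $\cH^{0,1}_{\Si}\otimes(T\P^1(-\log\pt))_p$. The rank~$1$ excess arises only after the node-smoothing deformation is fed into the obstruction via a map~$L$; this reduction is the substance of the paper's second proof. The paper carries it out with the \emph{unlogged} bundle~$T\P^1$: then $\Obs=\cH^{0,1}_{\Si}\!\otimes\!T\P^1$ and $\Im\,L=T_{x_1}\Si\!\otimes\!T\P^1$ have fiber-degrees~$4$ and~$2$, so the quotient line bundle has degree~$2$ on each $\P^1$-fiber, and the induced section has a forced zero at $p\!=\!\pt$ which must be excluded because the genus~$2$ curve sinks into the divisor there. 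The answer is $2-1=1$, and that~$-1$ is exactly the ``main obstacle'' you flagged but did not resolve. Your expected identity $c_1(\Obs)=c_1(\cO_{\P(\mathcal E)}(1))+(\text{pullback})$ would have to absorb this correction into the rubber compactification, which is precisely the work left undone.

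In the localization route, your assertion that ``all other fixed loci map into $\partial\ov\cM_{2,1}$'' is false. The locus with the genus~$2$ curve entirely in the rubber over~$\pt$ is the rubber space $\ov\fM_{2,0;(1),(1)}^{0,\i}(\P^1,1)_{\sim}$, and $\st$ surjects from it onto~$\ov\cM_{2,1}$: a generic rubber element again has domain $\P^1\!\cup\!C'$ with $C'$ a smooth genus~$2$ curve, and stabilization lands at a generic point of~$\ov\cM_{2,1}$. The correct reason this locus contributes nothing is that its \emph{virtual} dimension is~$3$: the non-equivariant class $\st^*\si\!\in\!H^8$ already overshoots $[F]^{\vir}\!\in\!H_6$, so the localization integrand vanishes regardless of~$e(N^{\vir})^{-1}$. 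This is the paper's first proof. Your description of the genus-$2$-over-$\i$ locus and its unit contribution is correct.
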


\noindent
Since $\ov\cM_{2,1}$ is smooth (as an orbifold) and irreducible,
\eref{M21_e} is equivalent~to 
\BE{M21_e2} \blr{\st^*\si,[\ov\fM_{2,0;(1)}^{\pt}(\P^1,1)]^{\vir}}=1,\EE
where $\si\!\in\!H^8(\ov\cM_{2,1})$ is the Poincare dual of 
a generic element $(\Si,x_1)$ of~$\ov\cM_{2,1}$.
We give two proofs of~\eref{M21_e2} below.
The first argument applies the virtual localization theorems of \cite{GP,GV}
as in \cite[Chapter~27]{MirSym}.
The second proof applies the obstruction analysis of~\cite{ObsAnal}
as in \cite[Section~4]{g2n2and3}.

\begin{proof}[{\bf{\emph{Proof 1 of \eref{M21_e2}}}}]
We use the standard $(\C^*)$-action on~$\P^1$.
It has two fixed points, 
$$p_1=[1,0]  \qquad\hbox{and}\qquad p_2=[0,1],$$
and lifts linearly to an action on $\cO_{\P^1}(1)\!\lra\!\P^1$.
As in \cite[Chapter~27]{MirSym}, we let
$$\al_i=c_1\big(\cO_{\P^1}(1)\big)\big|_{p_i}\in 
H_{(\C^*)^2}^*\equiv H^*\big(B((\C^*)^2)\big)
=H^*(\P^{\i}\!\times\!\P^{\i})=\C[\al_1,\al_2]\,.$$
The fixed loci of the induced action on $\ov\fM_{2,0;(1)}^{\,p_2}(\P^1,1)$
consist of maps sending components of positive genus to either the fixed point~$p_1$
or the rubber~$\P^1$ attached to the fixed point~$p_2$.
The three graphs describing these fixed loci in the notation of \cite[Chapter~27]{MirSym}
are shown in Figure~\ref{P1loc_fig1}.
In these diagrams, the first vertex label indicates the corresponding fixed point of~$\P^1$,
while the second indicates the genus of the component taken there, if any.
The edge degree is~1 in all cases, corresponding to the degree~1 cover from~$\P^1\!\lra\!\P^1$. \\

\begin{figure}
\begin{pspicture}(-.2,1.2)(11,3)
\psset{unit=.3cm}
\pscircle*(16,9){.2}\rput(17.8,9){\sm{$(1,2)$}}
\pscircle*(16,5){.2}\rput(16.8,5){\sm{$2$}}
\psline[linewidth=.05](16,9)(16,5)
\psline[linewidth=.05](16,5)(15,4)\rput(14.5,4){$1$}
\pscircle*(26,9){.2}\rput(27.8,9){\sm{$(1,1)$}}
\pscircle*(26,5){.2}\rput(27.8,5){\sm{$(2,1)$}}
\psline[linewidth=.05](26,9)(26,5)
\psline[linewidth=.05](26,5)(25,4)\rput(24.5,4){$1$}
\pscircle*(36,9){.2}\rput(36.8,9){\sm{$1$}}
\pscircle*(36,5){.2}\rput(37.8,5){\sm{$(2,2)$}}
\psline[linewidth=.05](36,9)(36,5)
\psline[linewidth=.05](36,5)(35,4)\rput(34.5,4){$1$}
\end{pspicture}
\caption{The three graphs describing the $(\C^*)^2$-fixed loci
of $\ov\fM_{2,0;(1)}^{\,p_2}(\P^1,1)$}
\label{P1loc_fig1}
\end{figure}

\noindent
The morphism~$\st$ takes the fixed locus represented by the middle diagram in Figure~\ref{P1loc_fig1}
to the closure in $\ov\cM_{2,1}$ of the locus consisting of two-component maps.
Thus, $\st^*\si$ vanishes on this locus and the middle diagram does not contribute
to~\eref{M21_e2} via the virtual localization theorem of~\cite{GP}.\\

\noindent 
The locus represented by the first diagram in Figure~\ref{P1loc_fig1}
is isomorphic to $\ov\cM_{2,1}$ and is cut down by $\st^*\si$ to a single point.
The space of deformations of this locus consists of moving the node and
of smoothing the node;
after restricting to the cut-down space, 
the equivariant chern class of both of these line bundles  equal to 
the equivariant chern class of $T\P^1$ at~$p_1$, 
which is $\al_1\!-\!\al_2$ in this case; see \cite[Exercise 27.1.3]{MirSym}.
The obstruction bundle after cutting down by $\st^*\si$ is 
$$H^1(\Si;T_{p_1}\P^1)=\big(H^0(\Si;T^*\Si\!\otimes\!T_{p_1}^*\P^1)\big)^*
\approx T_{p_1}\P^1\oplus T_{p_1}\P^1\,;$$
its equivariant euler class is $(\al_1\!-\!\al_2)^2$.
Thus, the contribution of the first diagram in Figure~\ref{P1loc_fig1} to~\eref{M21_e2}~is
$$\int_{\ov\cM_{2,1}}\st^*\si\,
\frac{(\al_1\!-\!\al_2)^2}{(\al_1\!-\!\al_2)\cdot(\al_1\!-\!\al_2)}=1\,;$$ 
see \cite[(7)]{GP} or \cite[Theorem~3.6]{GV}.\\

\noindent 
The locus represented by the last diagram in Figure~\ref{P1loc_fig1}
is isomorphic~to the (rubber) moduli space
$\ov\fM_{2,0;(1),(1)}^{0,\i}(\P^1,1)_{\sim}$
of relative morphisms to the non-rigid target~$(\P^1,0,\i)$ with the standard $\C^*$-action;
see \cite[Section]{GV}.
Since the virtual dimension of this moduli space is~3,
the restriction of~$\st^*\si$ to this fixed locus vanishes.
By \cite[Theorem~3.6]{GV}, the last diagram in Figure~\ref{P1loc_fig1}
thus does not contribute to~\eref{M21_e2}.
Combining this with the conclusion of the two previous paragraphs,
we obtain~\eref{M21_e2}.
\end{proof}

\begin{proof}[{\bf{\emph{Proof 2 of \eref{M21_e2}}}}]
Let $(\Si,\fj,x_1)$ be a generic element of $\ov\cM_{2,1}$ as before.
The number~\eref{M21_e2} is the number of solutions $u\!:\Si\!\lra\!\P^1$ of
\BE{Jnueq_e5}\dbar_{J,\fj}u\big|_z=\nu\big(z,u(z)\big)\quad\forall~z\in\Si, \qquad
u_*[\Si]=[\P^1]\in H_2(\P^1;\Z),\EE
for a generic $\nu\!\in\!\Ga_{\fj}^{\pt}(\P^1,J)$, where
$$\Ga_{\fj}^{\pt}(\P^1,J)\subset 
\Ga\big(\Si\!\times\!\P^1,(T^*\Si)^{0,1}\!\otimes_{\C}\!T\P^1\big)$$
is the subspace of elements $\nu$ such~that
\BE{nuR_e}\nu|_{\Si\times\pt}=0,\qquad
\na_w\nu+J\na_{Jw}\nu=0 \quad\forall~w\!\in\!T_{\pt}\P^1.\EE
The moduli space of degree~1 holomorphic maps $(\Si,\fj)\!\lra\!\P^1$ 
and its obstruction bundle are given~by
\BE{TarBndl_e}\cH_{\Si}^{0,1}\otimes T\P^1\approx\Obs\lra \Hol_{\fj}(\P^1,1)\approx \P^1\,,\EE
where $\cH_{\Si}^{0,1}$ is the space of harmonic $(0,1)$-forms on~$\Si$.\\

\noindent
The space of deformations of the domain of the elements in $\Hol_{\fj}(\P^1,1)$
is the product of the two tangent bundles
at the node, i.e. 
\BE{DomBndl_e} T_{x_1}\Si\otimes T\P^1\approx  T\P^1\lra \P^1\,.\EE
Each smoothing parameter $\ups$ in this line bundle determines an approximately
$(J,\fj)$-holomorphic map $u_{\ups}\!:\Si\!\lra\!\P^1$;
see \cite[Section~3.3]{ObsAnal}.
The first-order term of the projection $\pi^{0,1}_{\ups,-}\dbar_{J,\fj}u_{\ups}$ 
of $\dbar_{J,\fj}u_{\ups}$ to~$\Obs$ is given~by
$$\big\{ L(\ups_1\!\otimes\!\ups_2)\big\}(\psi)=\psi_{x_1}(\ups_1)
\big\{\nd_{x_2}u\big\}(\ups_2)\in T_{u(x_2)}\P^1 \qquad\forall~\psi\in\cH^{1,0}_{\Si}\,,$$
where $x_2\!\in\!\P^1$ is the node of the rational component of the domain of the map;
see \cite[Lemma~4.5]{g2n2and3}.
Since $L$ is injective in this case, the solutions of~\eref{Jnueq_e5}
correspond to the zeros of the section~of 
\BE{AffMap_e}\Obs/\Im\,L\lra \Hol_{\fj}(\P^1,1)\EE
induced by a generic $\nu$, excluding the one with $u(x_2)\!=\!\pt$;
see the proof of \cite[Corollary~4.7]{g2n2and3}.
Thus, the number of solutions of~\eref{Jnueq_e5} is
$$\blr{e(\Obs/\Im\,L),\Hol_{\fj}(\P^1,1)}-1
=\blr{c_1(\cH_{\Si}^{0,1}\!\otimes\!T\P^1)-c_1(T_{x_1}\Si\!\otimes\!T\P^1),\P^1}-1
=1.$$
This establishes~\eref{M21_e2}. 
\end{proof}

\noindent
We next use the virtual localization theorem of~\cite{GP}
to compute the absolute invariant and the $\de\!=\!1$ case of the relative invariant
in Example~\ref{P1_eg}.
We continue with the localization setup of the first proof of~\eref{M21_e2}
and compute
\BE{M21loc_e}\begin{split} 
&\int_{[\ov\fM_{2,2}(\P^1,1)]^{\vir}}\!\!\st^*\ka^4\,
\ev_1^*\cO_{\P^1}(1\!-\!\al_2)\,\ev_2^*\cO(1\!-\!\al_2) \qquad\hbox{and}\\
&\int_{[\ov\fM_{2,2;(1)}^{p_2}(\P^1,1)]^{\vir}}\!\!\st^*\ka^4\,
\ev_1^*\cO_{\P^1}(1\!-\!\al_2)\,\ev_2^*\cO_{\P^1}(1\!-\!\al_2)\,.
\end{split}\EE
The $(\C^*)^2$-fixed loci consist of maps sending the positive-genus components 
and the absolute marked points to the fixed points~$p_1$ and~$p_2$.
Since the equivariant chern class of $\cO_{\P^1}(1\!-\!\al_2)$ vanishes at~$p_2$,
the only graphs possibly contributing to the integrals in~\eref{M21loc_e}
must have both absolute marked points sent to~$p_1$.
Since the morphism~$\st$ takes fixed loci with a positive-genus component at
both fixed points to the closure in $\ov\cM_{2,2}$ of the locus consisting
of two genus~1 curves, 
$\st^*\ka^4$ vanishes on such fixed loci as well.
The two remaining graphs possibly contributing to each of the integrals
in~\eref{M21loc_e} are shown in Figure~\ref{P1loc_fig2}.\\

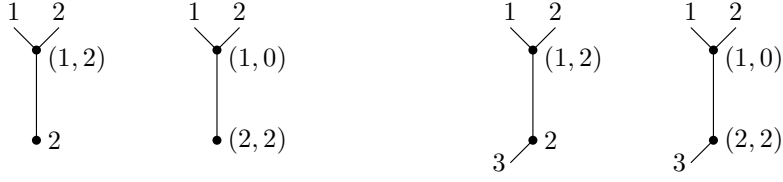
\begin{figure}
\begin{pspicture}(.6,1.2)(11,3)
\psset{unit=.3cm}
\pscircle*(14,9){.2}\rput(15.8,8.6){\sm{$(1,2)$}}
\pscircle*(14,5){.2}\rput(14.8,5){\sm{$2$}}
\psline[linewidth=.05](14,9)(14,5)
\psline[linewidth=.05](14,9)(13,10)\rput(13,10.7){\sm{$1$}}
\psline[linewidth=.05](14,9)(15,10)\rput(15,10.7){\sm{$2$}}
\pscircle*(22,9){.2}\rput(23.8,8.6){\sm{$(1,0)$}}
\pscircle*(22,5){.2}\rput(23.8,5){\sm{$(2,2)$}}
\psline[linewidth=.05](22,9)(22,5)
\psline[linewidth=.05](22,9)(21,10)\rput(21,10.7){\sm{$1$}}
\psline[linewidth=.05](22,9)(23,10)\rput(23,10.7){\sm{$2$}}
\pscircle*(36,9){.2}\rput(37.8,8.6){\sm{$(1,2)$}}
\pscircle*(36,5){.2}\rput(36.8,5){\sm{$2$}}
\psline[linewidth=.05](36,9)(36,5)
\psline[linewidth=.05](36,9)(35,10)\rput(35,10.7){\sm{$1$}}
\psline[linewidth=.05](36,9)(37,10)\rput(37,10.7){\sm{$2$}}
\psline[linewidth=.05](36,5)(35,4)\rput(34.5,4){\sm{$3$}}
\pscircle*(44,9){.2}\rput(45.8,8.6){\sm{$(1,0)$}}
\pscircle*(44,5){.2}\rput(45.8,5){\sm{$(2,2)$}}
\psline[linewidth=.05](44,9)(44,5)
\psline[linewidth=.05](44,9)(43,10)\rput(43,10.7){\sm{$1$}}
\psline[linewidth=.05](44,9)(45,10)\rput(45,10.7){\sm{$2$}}
\psline[linewidth=.05](44,5)(43,4)\rput(42.5,4){\sm{$3$}}
\end{pspicture}
\caption{The two pairs of graphs possibly contributing to the two integrals
in~\eref{M21loc_e}} 
\label{P1loc_fig2}
\end{figure}

\noindent
The locus represented by the first diagram in Figure~\ref{P1loc_fig2}
is isomorphic to $\ov\cM_{2,3}$.
The space of deformations of this locus consists of moving the node and
of smoothing the node; 
the equivariant chern classes of these line bundles are 
$\al_1\!-\!\al_2$ and $\al_1\!-\!\al_2\!-\!\psi_3$, respectively. 
The euler class of the obstruction bundle is given~by
$$e\big(\E^*\!\otimes\!T_{p_1}\P^1\big)
=\la_2-(\al_1\!-\!\al_2)\la_1+(\al_1\!-\!\al_2)^2\,.$$
By \cite[(7)]{GP}, the contribution of the first diagram in Figure~\ref{P1loc_fig2} 
to the first integral in~\eref{M21loc_e} is~thus
\BE{M21loc_e1a}\begin{split}
&\int_{\ov\cM_{2,3}}\!\!\!\st^*\ka^4\,(\al_1\!-\!\al_2)^2\,
\frac{\la_2-(\al_1\!-\!\al_2)\la_1+(\al_1\!-\!\al_2)^2}
{(\al_1\!-\!\al_2)(\al_1\!-\!\al_2\!-\!\psi_3)}
=\int_{\ov\cM_{2,3}}\!\!\!\st^*\ka^4\big(\la_2\!-\!\la_1\psi_3\!+\!\psi_3^2)\\
&\qquad
= 0+4\lr{\psi_1^3\la_1,\ov\cM_{2,1}}
-\lr{(f^*\psi_1)^3\psi_2^2,\ov\cM_{2,2}}
=4\lr{\psi_1^3\la_1,\ov\cM_{2,1}}-\lr{\psi_1^3\psi_2^2,\ov\cM_{2,2}}\,,
\end{split}\EE
where $f\!:\ov\cM_{2,2}\!\lra\!\ov\cM_{2,1}$ is the forgetful morphism.
The second equality above applies the dilaton equation \cite[Exercise~25.2.7]{MirSym}
to the middle term, while the last equality follows from \cite[Lemma~25.2.3]{MirSym}.\\

\noindent
The locus represented by the second diagram in Figure~\ref{P1loc_fig2}
is isomorphic to $\ov\cM_{2,1}$.
The space of deformations of this locus consists of moving and
smoothing the two nodes; 
the total equivariant euler class of the deformations~is
$$(\al_1\!-\!\al_2)(\al_2\!-\!\al_1)(\al_1\!-\!\al_2)(\al_2\!-\!\al_1\!-\!\psi_1).$$
The euler class of the obstruction bundle is given~by
$$e\big(\E^*\!\otimes\!T_{p_2}\P^1\big)
=\la_2-(\al_2\!-\!\al_1)\la_1+(\al_2\!-\!\al_1)^2\,.$$
By \cite[(7)]{GP}, the contribution of the second diagram in Figure~\ref{P1loc_fig2} 
to the first integral in~\eref{M21loc_e} is~thus
\BE{M21loc_e1b}\begin{split}
&\int_{\ov\cM_{2,1}}\!\!\!\st^*\ka^4\,(\al_1\!-\!\al_2)^2\,
\frac{\la_2-(\al_2\!-\!\al_1)\la_1+(\al_1\!-\!\al_2)^2}
{(\al_1\!-\!\al_2)^3(\al_1\!-\!\al_2\!+\!\psi_1)}
=\int_{\ov\cM_{2,1}}\!\!\!\psi_1^4=\frac{1}{1152}\,.
\end{split}\EE
Combining \eref{M21loc_e1a} and \eref{M21loc_e1b} with 
$$\lr{\psi_1^3\psi_2^2,\ov\cM_{2,2}}=\frac{29}{5760} $$
from C.~Faber's program, we obtain the first equality in~\eref{P1_e}.\\

\noindent
The contribution of the fixed locus of $\ov\fM_{2,2;(1)}^{p_2}(\P^1,1)]^{\vir}$
represented by third diagram in Figure~\ref{P1loc_fig2} 
to the second integral in~\eref{M21loc_e} is the same 
as of the first diagram to the the first integral in~\eref{M21loc_e}.
The locus represented by the fourth diagram in Figure~\ref{P1loc_fig2}
is isomorphic to $\ov\cM_{2,2}$.
The space of deformations of this locus consists of moving node at~$p_1$
and   smoothing both nodes; 
the total equivariant euler class of the deformations~is
$$(\al_1\!-\!\al_2)(\al_1\!-\!\al_2)(\al_2\!-\!\al_1\!-\!\psi_1).$$
The euler class of the obstruction bundle in this case is given~by
$$e\big(\E^*\!\otimes\!T_{p_2}\P^1(-p_2)\big)=\la_2\,.$$
By \cite[(7)]{GP}, the contribution of the fourth diagram in Figure~\ref{P1loc_fig2} 
to the second integral in~\eref{M21loc_e} is~thus
$$\int_{\ov\cM_{2,2}}\!\!\!\st^*\ka^4\,(\al_1\!-\!\al_2)^2\,
\frac{\la_2}{(\al_1\!-\!\al_2)^2(\al_2\!-\!\al_1\!-\!\psi_1)}=0.$$
Along with the two previous paragraphs, this provides a direct check
of the $d\!=\!1$ case of the second equality in~\eref{P1_e}.

\subsection{Genus 3 degree 1 primary invariants of~$\P^4$}
\label{P4eg_subs}

\noindent
We establish the second equality in~\eref{P4_e} by applying the symplectic sum formula, as stated 
in the second-to-last equation on page~201 in~\cite{Jun2}, to 
the absolute GW-invariant in~\eref{P4_e} via the decomposition~\eref{Xdecomp_e} with
$X\!=\!\P^4$ and $V\!=\!V_{\de}$, where $V_{\de}\!\subset\!\P^4$ 
is a smooth degree~$\de$ hypersurface.\\

\begin{figure}
\begin{pspicture}(0,0)(11,4)
\psset{unit=.3cm}
\psline[linewidth=.1](10,6)(22,6)\rput(9.5,9){$\P^4$}\rput(8.8,3){$\P_{\P^4}V_7$}
\rput(9.5,6.7){\sm{$V_7$}}\rput(8.7,5.1){\sm{$\P_{\P^4,\i}V_7$}}
\pscircle*(16,9){.25}\rput(18.2,9){\sm{$(3,1)$}}
\psline[linewidth=.05](16,9)(15,10.5)\rput(15,11.2){\sm{1}}
\pscircle*(11.5,6){.2}\pscircle*(13,6){.2}\pscircle*(14.5,6){.2}\pscircle*(16,6){.2}
\pscircle*(17.5,6){.2}\pscircle*(19,6){.2}\pscircle*(20.5,6){.2}
\psline[linewidth=.05](16,9)(11.5,6)\psline[linewidth=.05](16,9)(13,6)
\psline[linewidth=.05](16,9)(14.5,6)\psline[linewidth=.05](16,9)(16,6)
\psline[linewidth=.05](16,9)(17.5,6)\psline[linewidth=.05](16,9)(19,6)
\psline[linewidth=.05](16,9)(20.5,6)
\psline[linewidth=.05](11.5,6)(11.5,3)\psline[linewidth=.05](13,6)(13,3)
\psline[linewidth=.05](14.5,6)(14.5,3)\psline[linewidth=.05](16,6)(16,3)
\psline[linewidth=.05](17.5,6)(17.5,3)\psline[linewidth=.05](19,6)(19,3)
\psline[linewidth=.05](20.5,6)(20.5,3)
\pscircle*(11.5,3){.25}\pscircle*(13,3){.25}\pscircle*(14.5,3){.25}\pscircle*(16,3){.25}
\pscircle*(17.5,3){.25}\pscircle*(19,3){.25}\pscircle*(20.5,3){.25}
\rput(16,0){\rnode{A}{\sm{$(0,0)$}}}
\pnode(11.5,3){B1}\pnode(13,3){B2}\pnode(14.5,3){B3}\pnode(16,3){B4}
\pnode(17.5,3){B5}\pnode(19,3){B6}\pnode(20.5,3){B7}
\ncline[linewidth=.03,nodesep=3pt]{->}{A}{B1}
\ncline[linewidth=.03,nodesep=3pt]{->}{A}{B2}
\ncline[linewidth=.03,nodesep=3pt]{->}{A}{B3}
\ncline[linewidth=.03,nodesep=3pt]{->}{A}{B4}
\ncline[linewidth=.03,nodesep=3pt]{->}{A}{B5}
\ncline[linewidth=.03,nodesep=3pt]{->}{A}{B6}
\ncline[linewidth=.03,nodesep=3pt]{->}{A}{B7}
\psline[linewidth=.1](32,6)(44,6)\rput(31.5,9){$\P^4$}\rput(30.8,3){$\P_{\P^4}V_7$}
\rput(31.5,6.7){\sm{$V_7$}}\rput(30.7,5.1){\sm{$\P_{\P^4,\i}V_7$}}
\pscircle*(38,9){.25}\rput(40.2,9){\sm{$(0,1)$}}
\psline[linewidth=.05](38,9)(37,10.5)\rput(37,11.2){\sm{1}}
\pscircle*(33.5,6){.2}\pscircle*(35,6){.2}\pscircle*(36.5,6){.2}\pscircle*(38,6){.2}
\pscircle*(39.5,6){.2}\pscircle*(41,6){.2}\pscircle*(42.5,6){.2}
\psline[linewidth=.05](38,9)(33.5,6)\psline[linewidth=.05](38,9)(35,6)
\psline[linewidth=.05](38,9)(36.5,6)\psline[linewidth=.05](38,9)(38,6)
\psline[linewidth=.05](38,9)(39.5,6)\psline[linewidth=.05](38,9)(41,6)
\psline[linewidth=.05](38,9)(42.5,6)
\psline[linewidth=.05](33.5,6)(33.5,3)\psline[linewidth=.05](35,6)(35,3)
\psline[linewidth=.05](36.5,6)(36.5,3)\psline[linewidth=.05](38,6)(38,3)
\psline[linewidth=.05](39.5,6)(39.5,3)\psline[linewidth=.05](41,6)(41,3)
\psline[linewidth=.05](42.5,6)(42.5,3)
\pscircle*(33.5,3){.25}\pscircle*(35,3){.25}\pscircle*(36.5,3){.25}\pscircle*(38,3){.25}
\pscircle*(39.5,3){.25}\pscircle*(41,3){.25}\pscircle*(42.5,3){.25}
\rput(37.25,0){\rnode{A}{\sm{$(0,0)$}}}\rput(42.5,0){\rnode{A2}{\sm{$(3,0)$}}}
\pnode(33.5,3){B1}\pnode(35,3){B2}\pnode(36.5,3){B3}\pnode(38,3){B4}
\pnode(39.5,3){B5}\pnode(41,3){B6}\pnode(42.5,3){B7}
\ncline[linewidth=.03,nodesep=3pt]{->}{A}{B1}
\ncline[linewidth=.03,nodesep=3pt]{->}{A}{B2}
\ncline[linewidth=.03,nodesep=3pt]{->}{A}{B3}
\ncline[linewidth=.03,nodesep=3pt]{->}{A}{B4}
\ncline[linewidth=.03,nodesep=3pt]{->}{A}{B5}
\ncline[linewidth=.03,nodesep=3pt]{->}{A}{B6}
\ncline[linewidth=.03,nodesep=3pt]{->}{A2}{B7}
\end{pspicture}
\caption{Bipartite graphs~$\Ga$ that contribute to the absolute GW-invariant in~\eref{P4_e}
via the symplectic sum decomposition with respect to~$V_7$.}
\label{P4eg_fig}
\end{figure}

\noindent
Since the Poincare dual of the primary insertion in~\eref{P4_e}
vanishes on the hypersurface~$V_{\de}$
(the constraining point can be chosen outside of~$V_{\de}$),
$k_v\!=\!0$ for all $v\!\in\!\Ga_V$ (the marked point stays on the $X$-side)
if $\Ga$ is a bipartite graph as in Section~\ref{SympSum_subs}
contributing  to the absolute GW-invariant in~\eref{P4_e}.
Furthermore, $A_v\!=\!1$ for the unique vertex $v\!\in\!\Ga_X$.
By Section~\ref{SympSum_subs} or Lemma~\ref{relGW_lmm} (separately), 
the only graphs~$\Ga$ that may contribute to 
the absolute GW-invariant in~\eref{P4_e} satisfy
\begin{enumerate}[label=(\arabic*),leftmargin=*]

\item $(g_v,A_v,k_v)\!=\!(0,0,0)$ for all $v\!\in\!\Ga_V$ or
\item  $(g_v,A_v,k_v)\!=\!(3,0,0)$ for one element $v\!\in\!\Ga_V$ and
$(g_v,A_v,k_v)\!=\!(0,0,0)$ for the remaining elements $v\!\in\!\Ga_V$.

\end{enumerate}
There are other bipartite graphs $\Ga$, but they all contain a vertex $v\!\in\!\Ga_V$
with $g_v\!\in\!\{1,2\}$;
by Section~\ref{SympSum_subs}, such a graph does not contribute
to an absolute GW-invariant with primary insertions via the symplectic sum formula.
By Section~\ref{SympSum_subs} or Lemma~\ref{relGW_lmm},
the label of the edge leaving a vertex $v\!\in\!\Ga_V$ with $g_v\!=\!0$ 
in a contributing graph~$\Ga$ is~1 (and thus omitted in our diagrams).
By the proof of Lemma~\ref{relGW_lmm} in Section~\ref{SympSum_subs},
the same is the case if $g_v\!=\!3$;
otherwise, the fiber of the projection in~\eref{fMfibr_e} would have positive dimension,
while too many conditions would be imposed on the curve on the $X$-side.
The same conclusions can be drawn from Section~\ref{DirComp_subs}.\\ 

\noindent
In summary, there are only two graphs that may contribute to
the absolute GW-invariant in~\eref{P4_e} via the symplectic sum formula;
they are shown in Figure~\ref{P4eg_fig}.
Thus,
\BE{P4eg_e}
\GW_{3,1}^{\P^4}(\pt) = \frac{1}{\de!}\GW_{3,1;\1_{\de}}^{\P^4,V_{\de}}(\pt)
+\frac{\de}{\de!}\,\GW_{0,1;\1_{\de}}^{\P^4,V_{\de}}(1;\pt;1^{\de-1},\pt)\,
\GW_{3,F;(1)}^{\P_{\P^4}V_{\de},\P_{\P^4,\i}V_{\de}}(1;1;1)\,,\EE
where $F\!\in\!H_2(\P_{\P^4}V_{\de};\Z)$ is the fiber class.
The first insertion~1 in the last two relative invariants in~\eref{P4eg_e}
indicates that no constraint is imposed on the domain of the maps by pulling 
back a class~$\ka$ from a Deligne-Mumford space of curves.
The relative insertions $(1^{\de-1},\pt)$ and $1$ in these invariants 
(shown after the second semi-column in each case) arise from the Kunneth decomposition
of the diagonal~$\De_V$ in~$V^2$;
the point insertion on the first of these invariants corresponds 
to the pairing with the second invariant, which arises from 
a zero-dimensional relative moduli space.
It~is immediate from the $g\!=\!0$ part of the argument in Section~\ref{DirComp_subs}
that 
\BE{P4eg_e2}\frac{\de}{\de!}\,\GW_{0,1;\1_{\de}}^{\P^4,V_{\de}}(1;\pt;1^{\de-1},\pt)=
\GW_{0,1}^{\P^4}(\pt,\pt)=1.\EE
Combining~\eref{P4eg_e} with~\eref{P4eg_e2} and Lemma~\ref{M31_lmm} below,
we conclude~that 
$$\GW_{3,1}^{\P^4}(\pt) = \frac{1}{\de!}\GW_{3,1;\1_{\de}}^{\P^4,V_{\de}}(\pt)
+\frac{\lr{c_1(V_{\de})c_2(V_{\de})\!-\!c_3(V_{\de}),V_{\de}}}{362880} \,.$$ 
The second equality in~\eref{P4_e} now follows from 
$$c(V_{\de})=\big((1+x)^5(1\!+\!\de x)^{-1}\big)\big|_{V_{\de}}\in H^*(V_{\de};\Z),$$
where $x\!=\!c_1(\cO_{\P^4}(1))\in H^2(\P^4;\Z)$ is the standard generator.

\begin{lmm}\label{M31_lmm}
Let $(V,\om)$ be a compact symplectic manifold of real dimension~$6$ 
and $L\!\lra\!V$ be a complex line bundle.
With notation as at the beginning of Section~\ref{VMext_subs},  
the virtual dimension of the genus~3 relative moduli space 
$\ov\fM_{3,0;(1)}^{\P_{L,\i}}(\P_L,F)$ is~0 and 
$$\deg\big[\ov\fM_{3,0;(1)}^{\P_{L,\i}}(\P_L,F)\big]^{\vir}
=\frac{\lr{c_1(V)c_2(V)\!-\!c_3(V),V}}{362880}\,.$$
\end{lmm}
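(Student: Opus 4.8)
The plan is to reduce this to a Hodge integral on~$\ov\cM_3$ by using that every stable map in the class~$F$ lies in a single fiber of $\pi_{L,V}\!:\P_L\!\to\!V$. First I would note that the composition of any such relative map with~$\pi_{L,V}$ is a genus~$3$ degree~$0$ stable map to~$V$, hence constant; this defines a proper morphism $\pi\!:\ov\fM_{3,0;(1)}^{\P_{L,\i}}(\P_L,F)\!\to\!V$ exhibiting the relative moduli space as a fiber bundle with fiber $\ov\fM_{3,0;(1)}^{\pt}(\P^1,1)$, under the identification of the fiber $\P_L|_p$ with~$\P^1$ and of $\P_{L,\i}|_p$ with~$\i$. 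Since $\lr{c_1(\P_L),F}\!=\!2$ and $\dim_{\R}\P_L\!=\!8$, the second line of~\eref{virdim_e} gives virtual dimension~$0$.

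Next I would identify the virtual class. Via the connection-induced splitting of Section~\ref{RelDfn_sec}, $u^*T\P_L\!\cong\!u^*T^{\vrt}\P_L\!\oplus\!\pi_{L,V}^*TV$ for a fiber-class map~$u\!:\Si\!\to\!\P_L$, and the $V$-directions are tangent to~$\P_{L,\i}$ everywhere, so they are untouched by the relative structure along~$\P_{L,\i}$. Hence the relative deformation-obstruction complex of~$u$ differs from that of $u$ viewed as a map to~$(\P^1,\i)$ by the two-term complex $\big(H^0(\Si,\cO_\Si)\!\otimes\!T_pV,\,H^1(\Si,\cO_\Si)\!\otimes\!T_pV\big)$: the first term accounts for moving $p\!\in\!V$, while the second, $\cong\!\E^\vee\!\otimes\!T_pV$ by Serre duality ($\E$ the Hodge bundle), is a pure obstruction. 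Exactly as for the obstruction bundle in~\eref{g1Obs_e2} and the genus~$1$ discussion around~\eref{g1_e} — and, in the symplectic category, via the corresponding perturbation/Kuranishi description of Section~\ref{RelDfn_sec} — this yields
$$\big[\ov\fM_{3,0;(1)}^{\P_{L,\i}}(\P_L,F)\big]^{\vir}=e\big(\E^\vee\boxtimes TV\big)\cap\Big(\big[\ov\fM_{3,0;(1)}^{\pt}(\P^1,1)\big]^{\vir}\!\times\![V]\Big),$$
where $\E\!\to\!\ov\fM_{3,0;(1)}^{\pt}(\P^1,1)$ is the Hodge bundle, pulled back from~$\ov\cM_{3,1}$ and hence from~$\ov\cM_3$.

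Now I would expand $e(\E^\vee\boxtimes TV)=\prod_{i=1}^3\big(a_i^3+c_1(V)a_i^2+c_2(V)a_i+c_3(V)\big)$, with $a_i$ the Chern roots of~$\E^\vee$. Only the part of complex degree~$3$ in the $c_j(V)$ pairs with~$[V]$, leaving a complementary factor of complex degree~$6$, i.e.~a top-degree polynomial in $\la_j\!=\!c_j(\E)$. Using $\la_3^2\!=\!0$ on~$\ov\cM_3$ (\cite[(5.3)]{Mumford}), this complement reduces to $(\la_2^3\!-\!3\la_1\la_2\la_3)\boxtimes c_3(V)+\la_1\la_2\la_3\boxtimes c_1(V)c_2(V)$. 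Let $\st_0\!:\ov\fM_{3,0;(1)}^{\pt}(\P^1,1)\!\to\!\ov\cM_3$ be the morphism dropping the map and the contact point; since $\la_j$ is $\st_0$-pulled back and $\ov\cM_3$ is irreducible of real dimension~$12$, we have $(\st_0)_*\big[\ov\fM_{3,0;(1)}^{\pt}(\P^1,1)\big]^{\vir}=c\,[\ov\cM_3]$ for a constant~$c$. Combining this with Table~\ref{M2and3_tbl} (which gives $\lr{\la_2^3,\ov\cM_3}\!=\!\tfrac{1}{725{,}760}$ and $\lr{\la_1\la_2\la_3,\ov\cM_3}\!=\!\tfrac{1}{1{,}451{,}520}$) yields
$$\deg\big[\ov\fM_{3,0;(1)}^{\P_{L,\i}}(\P_L,F)\big]^{\vir}=\frac{c}{1{,}451{,}520}\,\lr{c_1(V)c_2(V)\!-\!c_3(V),V}\,.$$

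The \emph{main obstacle} is computing the constant $c=\lr{\st_0^*\si,[\ov\fM_{3,0;(1)}^{\pt}(\P^1,1)]^{\vir}}$, where $\si$ is the Poincar\'e dual of a generic point of~$\ov\cM_3$ — the genus~$3$ analogue of~\eref{M21_e2}. I would do this by virtual localization for the standard $\C^*$-action on~$\P^1$, as in the first proof of~\eref{M21_e2}: the $\C^*$-fixed loci of $\ov\fM_{3,0;(1)}^{p_2}(\P^1,1)$ are the obvious genus-$3$ analogues of the graphs in Figure~\ref{P1loc_fig1}, and after capping with $\st_0^*\si$ all loci with the genus split between the two ends, or with the genus sunk into the rubber, drop out (by the vanishing of~$\si$ on $\partial\ov\cM_3$ and by dimension, respectively), leaving only the locus $\cong\ov\cM_{3,1}$ with the genus-$3$ part over~$p_1$; its contribution reduces to intersection numbers on $\ov\cM_3$ and~$\ov\cM_{3,1}$ from Tables~\ref{M2and3_tbl} and~\ref{M31_tbl}. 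This gives $c=4$, and since $1{,}451{,}520=4\cdot 362{,}880$ the asserted formula follows. (Alternatively one could localize directly on~$\P_L$ with the fiberwise $\C^*$-action, folding the integral over~$V$ into the fixed-point contributions at the cost of longer bookkeeping.)
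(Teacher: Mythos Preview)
Your proposal is correct and follows essentially the same route as the paper. The paper packages the reduction slightly differently---it fibers over the (zero-dimensional, after generic $\nu$) moduli space $\ov\fM_{3,0}(V,0)$ rather than directly over~$V$, but this amounts to the same thing: the obstruction bundle $\E^*\!\otimes\!TV$ and the constant~$c$ (stated separately as Lemma~\ref{M30_lmm}) play identical roles. Your Euler-class expansion and use of $\la_3^2\!=\!0$ match the paper's exactly. One small point: the contribution of the surviving locus $\ov\cM_{3,1}$ in the localization for~$c$ is actually simpler than you indicate---after cutting down by $\st_0^*\si$ it becomes $-\int_{\Si}c_1(T\Si)=2g\!-\!2=4$ directly, without needing the tables.
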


\begin{proof}
The first claim is immediate from the second equation in~\eref{virdim_e}.
In order to establish the second claim, we proceed as in Section~\ref{SympSum_subs}
by first choosing a generic deformation $\nu\!\in\!\Ga_{3,0}(V,J_V)$.
Lifting~$J_V$ and $\nu$ to $\P_L\!\lra\!V$ as in  Section~\ref{SympSum_subs}, 
we obtain a fibration
\BE{fMfibr_e2}
\pi_{L,V}\!:\ov\fM_{3,0;(1)}^{\P_{L,\i}}\big(\P_L,F;J,\pi_{X,V}^*\nu\big)
\lra \ov\fM_{3,0}\big(V,0;J_V,\nu\big)\EE
as in~\eref{fMfibr_e}.
In this case, the base is zero-dimensional.
Since the obstruction bundle for $\ov\fM_{3,0}(V,0)$ is given by~\eref{ObsBun_e}
with $(g_v,k_v\!+\!\ell_v)\!=\!(3,0)$, the degree of this base~is
\begin{equation*}\begin{split}
&\blr{e(\pi_1^*\E^*\!\otimes\!\pi_2^*TV),\ov\cM_3\!\times\!V}
=\lr{\la_1\la_2\la_3,\ov\cM_3}\blr{c_1(V)c_2(V)\!-\!3c_3(V),V}\\
&\hspace{1in}+\lr{\la_2^3,\ov\cM_3}\blr{c_3(V),V}
+\lr{\la_3^2,\ov\cM_3}\blr{c_1(V)^3\!-\!3c_1(V)c_2(V)\!+\!3c_3(V),V}\,.
\end{split}\end{equation*}
The three intersection numbers on $\ov\cM_3$ above are provided by 
Table~\ref{M2and3_tbl} and \eref{lag2_e}.
The second claim of Lemma~\ref{M31_lmm} now follows from Lemma~\ref{M30_lmm} below.
\end{proof}

\begin{lmm}\label{M30_lmm}
If $\st\!:\ov\fM_{3,0;(1)}^{\pt}(\P^1,1)\!\lra\!\ov\cM_3$ is the forgetful morphism
dropping the map to~$\P^1$ and the marked point, then
\BE{M30_e}\st_*\big[\ov\fM_{3,0;(1)}^{\pt}(\P^1,1)\big]^{\vir}
=4\big[\ov\cM_3\big].\EE\\
\end{lmm}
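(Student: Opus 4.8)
\noindent
The plan is to adapt the virtual localization theorem of~\cite{GP,GV}, as applied in \cite[Chapter~27]{MirSym}, exactly as in the first proof of~\eref{M21_e2}. Since $\ov\cM_3$ is irreducible (as an orbifold) of dimension~$6$ and, by the first equation in~\eref{virdim_e}, $\dim^{\vir}\ov\fM_{3,0;(1)}^{\pt}(\P^1,1)\!=\!6$ as well, the morphism~$\st$ being proper gives $\st_*[\ov\fM_{3,0;(1)}^{\pt}(\P^1,1)]^{\vir}\!=\!c\,[\ov\cM_3]$ for some $c\!\in\!\Q$; it thus suffices to show
\[
c=\blr{\st^*\si,[\ov\fM_{3,0;(1)}^{\pt}(\P^1,1)]^{\vir}}=4,
\]
where $\si\!\in\!H^{12}(\ov\cM_3)$ is the Poincare dual of a generic element $[\Si]\!\in\!\ov\cM_3$. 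I would use the standard $(\C^*)^2$-action on~$\P^1$ with fixed points $p_1\!=\![1,0]$ and $p_2\!=\![0,1]$, taking $p_2$ to be the relative divisor~$\pt$, and set $w\!\equiv\!\al_1\!-\!\al_2$, with $\al_i$ as in the first proof of~\eref{M21_e2}.

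\noindent
The $(\C^*)^2$-fixed loci of $\ov\fM_{3,0;(1)}^{\,p_2}(\P^1,1)$ consist of maps sending every positive-genus component either to~$p_1$ or to the rubber~$\P^1$ attached over~$p_2$. Cutting down by~$\st^*\si$ annihilates every fixed locus whose $\st$-image lies in~$\partial\ov\cM_3$, i.e.~every fixed locus carrying more than one positive-genus vertex; this is the analogue of the vanishing of the middle graph in Figure~\ref{P1loc_fig1}. Since the total degree is~$1$ and the only contact point has order~$1$, the surviving fixed loci are precisely: (a)~the locus with the genus~$3$ component contracted to~$p_1$, the degree~$1$ edge from~$p_1$ to~$p_2$, and the relative marking at~$p_2$, which is isomorphic to~$\ov\cM_{3,1}$; and (b)~the rubber locus with the genus~$3$ component sinking into the $\P^1$ over~$p_2$, which is isomorphic to the non-rigid moduli space $\ov\fM_{3,0;(1),(1)}^{0,\i}(\P^1,1)_{\sim}$. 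By the second equation in~\eref{virdim_e}, the latter has virtual dimension $2\!+\!(1\!-\!3)(1\!-\!3)\!-\!1\!=\!5\!<\!6$, so $\st^*\si$ restricts to~$0$ on it; it therefore contributes~$0$ to~$c$, just as the last graph in Figure~\ref{P1loc_fig1} does not contribute to~\eref{M21_e2}.

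\noindent
It remains to compute the contribution of~(a). On this fixed locus $\st$ is the forgetful morphism $\pi\!:\ov\cM_{3,1}\!\lra\!\ov\cM_3$, so $\st^*\si\!=\!\pi^*\si$ is Poincare dual to a fiber~$\Si$ of~$\pi$. The moving part of the deformations is spanned by smoothing the node (the line $T_\bullet\Si\!\otimes\!T_\bullet\P^1$ built from the tangent lines at the two branches of the node, with equivariant first Chern class $w\!-\!\psi_1$) and by moving the contracted image off~$p_1$ (the line $H^0(\Si,\cO)\!\otimes\!T_{p_1}\P^1$, with equivariant first Chern class~$w$); the moving obstruction is $H^1(\Si,\cO)\!\otimes\!T_{p_1}\P^1\!\cong\!\E^*\!\otimes\!L_w$, with equivariant euler class $w^3\!-\!\la_1w^2\!+\!\la_2w\!-\!\la_3$ in view of~\eref{ObsBun_e} and $c_j(\E^*)\!=\!(-1)^j\la_j$. (The virtual ranks are consistent: $2\!-\!3\!=\!6\!-\!7$.) Hence, by \cite[(7)]{GP}, the contribution of~(a) is
\[
\int_{\ov\cM_{3,1}}\!\!\pi^*\si\;\frac{w^3-\la_1w^2+\la_2w-\la_3}{w(w-\psi_1)}
=\int_{\ov\cM_{3,1}}\!\!\pi^*\si\,(\psi_1\!-\!\la_1)
=\blr{\si,\pi_*\psi_1}-\blr{\si,\pi_*\la_1}=(2g\!-\!2)-0=4,
\]
where the second equality extracts the degree-$1$ part of the fraction (the only part pairing nontrivially with the codimension-$6$ class~$\pi^*\si$ on~$\ov\cM_{3,1}$), and the last equalities use $\pi_*\psi_1\!=\!(2g\!-\!2)[\ov\cM_3]$, $\pi_*\la_1\!=\!\la_1\,\pi_*1\!=\!0$, and $\blr{\si,[\ov\cM_3]}\!=\!1$. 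This yields $c\!=\!4$, i.e.~\eref{M30_e}. I expect the main obstacle to be the careful enumeration of the fixed loci together with the bookkeeping of the moving part of the virtual normal bundle of~(a) in the relative setting; in particular the deformation $H^0(\Si,\cO)\!\otimes\!T_{p_1}\P^1$, the analogue of the ``moving the node'' direction in the first proof of~\eref{M21_e2}, must be included, as is dictated by the virtual rank count. Alternatively, one can run the obstruction analysis of~\cite{ObsAnal} as in the second proof of~\eref{M21_e2}: the holomorphic stable maps whose stabilized domain is a fixed generic~$\Si$ form a two-dimensional family (the attaching point of the degree~$1$ rational tail on~$\Si$, and the image of the contracted component in~$\P^1$), and integrating the euler class of the reduced obstruction bundle over it produces the same factor $2g\!-\!2\!=\!4$.
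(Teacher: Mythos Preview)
Your proposal is correct and follows essentially the same approach as the paper's first proof: virtual localization with respect to the standard torus action, elimination of the mixed-genus fixed loci because their $\st$-images lie in $\partial\ov\cM_3$, elimination of the rubber locus by its virtual dimension~$5$, and computation of the single surviving contribution from the genus-$3$ vertex at~$p_1$. The only cosmetic difference is that the paper restricts to the fiber~$\Si\subset\ov\cM_{3,1}$ first and then integrates $-c_1(T\Si)$, whereas you keep the $\psi_1$- and $\la$-classes on all of~$\ov\cM_{3,1}$ and push forward by~$\pi$; both give $2g\!-\!2\!=\!4$.
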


\noindent
Since $\ov\cM_3$ is smooth (as an orbifold) and irreducible,
\eref{M30_e} is equivalent~to 
\BE{M30_e2} \blr{\st^*\si,[\ov\fM_{3,0;(1)}^{\pt}(\P^1,1)]^{\vir}}=4,\EE
where $\si\!\in\!H^{12}(\ov\cM_3)$ is the Poincare dual of 
a generic element $\Si$ of~$\ov\cM_3$.
We give two proofs of~\eref{M30_e2} below,
which are similar to the two proofs of~\eref{M21_e2}.

\begin{proof}[{\bf{\emph{Proof 1 of \eref{M30_e2}}}}]
We continue with the localization setup in the first proof of~\eref{M21_e2}.
The fixed loci of the induced action on $\ov\fM_{3,0;(1)}^{\,p_2}(\P^1,1)$
again consist of maps sending components of positive genus to either the fixed point~$p_1$
or the rubber~$\P^1$ attached to the fixed point~$p_2$.
The four graphs describing these fixed loci, in the notation of \cite[Chapter~27]{MirSym}
and Figure~\ref{P1loc_fig1}, are shown in Figure~\ref{P4loc_fig1}.\\

\begin{figure}
\begin{pspicture}(1.2,1.2)(11,3)
\psset{unit=.3cm}
\pscircle*(16,9){.2}\rput(17.8,9){\sm{$(1,3)$}}
\pscircle*(16,5){.2}\rput(16.8,5){\sm{$2$}}
\psline[linewidth=.05](16,9)(16,5)
\psline[linewidth=.05](16,5)(15,4)\rput(14.5,4){$1$}
\pscircle*(26,9){.2}\rput(27.8,9){\sm{$(1,2)$}}
\pscircle*(26,5){.2}\rput(27.8,5){\sm{$(2,1)$}}
\psline[linewidth=.05](26,9)(26,5)
\psline[linewidth=.05](26,5)(25,4)\rput(24.5,4){$1$}
\pscircle*(36,9){.2}\rput(37.8,9){\sm{$(1,1)$}}
\pscircle*(36,5){.2}\rput(37.8,5){\sm{$(2,2)$}}
\psline[linewidth=.05](36,9)(36,5)
\psline[linewidth=.05](36,5)(35,4)\rput(34.5,4){$1$}
\pscircle*(46,9){.2}\rput(46.8,9){\sm{$1$}}
\pscircle*(46,5){.2}\rput(47.8,5){\sm{$(2,3)$}}
\psline[linewidth=.05](46,9)(46,5)
\psline[linewidth=.05](46,5)(45,4)\rput(44.5,4){$1$}
\end{pspicture}
\caption{The four graphs describing the $(\C^*)^2$-fixed loci 
of $\ov\fM_{3,0;(1)}^{\,p_2}(\P^1,1)$}
\label{P4loc_fig1}
\end{figure}

\noindent
The morphism~$\st$ takes the fixed loci represented by the two middle diagrams 
in Figure~\ref{P4loc_fig1} to the closure in $\ov\cM_3$ of the locus consisting 
of two-component maps.
Thus, $\st^*\si$ vanishes on these loci and the two middle diagrams do not contribute
to~\eref{M30_e2} via the virtual localization theorem of~\cite{GP}.\\

\noindent 
The locus represented by the first diagram in Figure~\ref{P4loc_fig1}
is isomorphic to $\ov\cM_{3,1}$ and is cut down by $\st^*\si$ to 
the curve~$\Si$ (which encodes the position of the node).
The space of deformations of this locus consists of moving the node and
of smoothing the node;
its euler class equals
$$(\al_1\!-\!\al_2)(\al_1\!-\!\al_2\!+\!c_1(T\Si))$$
after restricting to the cut-down space.
The obstruction bundle after cutting down by $\st^*\si$ is 
$$H^1(\Si;T_{p_1}\P^1)=\big(H^0(\Si;T^*\Si\!\otimes\!T_{p_1}^*\P^1)\big)^*
\approx T_{p_1}\P^1\oplus T_{p_1}\P^1\oplus T_{p_1}\P^1\,;$$
its equivariant euler class is $(\al_1\!-\!\al_2)^3$.
Thus, the contribution of the first diagram in Figure~\ref{P4loc_fig1} to~\eref{M30_e2}~is
$$\int_{\ov\cM_{3,1}}\st^*\si\,
\frac{(\al_1\!-\!\al_2)^3}{(\al_1\!-\!\al_2)(\al_1\!-\!\al_2\!+\!c_1(T\Si))}
=-\int_{\Si}c_1(T\Si)=4\,;$$ 
see \cite[(7)]{GP} or \cite[Theorem~3.6]{GV}.\\

\noindent 
The locus represented by the last diagram in Figure~\ref{P4loc_fig1}
is isomorphic~to  $\ov\fM_{3,0;(1),(1)}^{0,\i}(\P^1,1)_{\sim}$.
Since the virtual dimension of this moduli space is~5,
the restriction of~$\st^*\si$ to this fixed locus vanishes.
By \cite[Theorem~3.6]{GV}, the last diagram in Figure~\ref{P4loc_fig1}
thus does not contribute to~\eref{M30_e2}.
Combining this with the conclusion of the two previous paragraphs,
we obtain~\eref{M30_e2}.
\end{proof}

\begin{proof}[{\bf{\emph{Proof 2 of \eref{M30_e2}}}}]
Let $(\Si,\fj)$ be a generic element of $\ov\cM_3$ as before.
The first paragraph of the second proof of~\eref{M21_e2} applies
to the present situation; the only change is that the base in~\eref{TarBndl_e}
is replaced~by 
$$\Si\!\times\!\Hol_{\fj}(\P^1,1)\approx \Si\!\times\!\P^1\,.$$
The line bundle of smoothing parameters~\eref{DomBndl_e} now becomes
$$ T\Si\otimes T\P^1\lra \Si\!\times\!\P^1\,.$$
Analogously to the sentence containing~\eref{AffMap_e},
the solutions of the analogue of~\eref{Jnueq_e5} in this situation 
correspond to the zeros of the section~of 
$$\Obs/\Im\,L\lra \Si\!\times\!\Hol_{\fj}(\P^1,1),$$
with $L$ as before, induced by a generic admissible $\nu$, excluding  
the ones with $u(x_2)\!=\!\pt$.
Without the first restriction on~$\nu$ in~\eref{nuR_e}, the number of such zeros 
would have been 
$$\blr{e(\Obs/\Im\,L),\Si\!\times\!\Hol_{\fj}(\P^1,1)}
=\blr{c_1(\C^3/T\Si),\Si}\blr{c_1(T\P^1),\P^1}=8.$$
The contribution to this number from the vanishing of~$\bar\nu$ along $\Si\!\times\!\pt$
is the number of zeros of an affine bundle~map
$$\C\oplus T\Si\!\otimes\!T_{\pt}\P^1\lra \cH_{\Si}^{0,1}\!\otimes\!T_{\pt}\P^1$$
with an injective linear part.
Thus, the latter number~is
$$\blr{e(\C^3/(\C\!\oplus\!T\Si)),\Si}=4\,.$$
The number in~\eref{M30_e2} is the difference of the two numbers above. 
\end{proof}

\subsection{The $\de\!=\!0,1$ numbers in Example~\ref{P4_eg}}
\label{LocComp_subs}

\noindent
We now use the virtual localization theorem of~\cite{GP} to compute 
the absolute invariant and 
the virtual localization theorem of~\cite{GV} to compute 
the $\de\!=\!1$ case of the relative invariant
in Example~\ref{P4_eg}.\\

\noindent
We apply \cite[(7)]{GP} with the $\C^*$-action on~$\P^4$ given~by
$$c\cdot[Z_1,Z_2,Z_3,Z_4,Z_5]=[Z_1,cZ_2,cZ_3,c^{-1}Z_4,c^{-1}Z_5]$$
and its linear lift to $\cO_{\P^4}(1)$ defined in the same way.
The fixed locus of this action consists~of
$$p_1\equiv[1,0,0,0,0], \qquad 
\P_{23}^1\equiv\big\{[0,Z_2,Z_3,0,0]\in\P^4\big\},\qquad 
\P_{45}^1\equiv\big\{[0,0,0,Z_4,Z_5]\in\P^4\big\}.$$
Let
$$\al=c_1\big(\cO_{\P^4}(1)\big)\big|_{p_2}\in H^*_{\C^*}.$$
We denote~by 
$$\ov\fM_{3,1}(\P^4,1)_{p_1}\subset \ov\fM_{3,1}(\P^4,1) $$
the preimage of $p_1$ under the evaluation morphism~$\ev_1$. 
We will compute
\BE{M3loc_e}\begin{split} 
\int_{[\ov\fM_{3,1}(\P^4,1)_{p_1}]^{\vir}}\!\!1.
\end{split}\EE
The $\C^*$-fixed loci of this moduli space consist of maps sending 
the positive-genus components to~$p_1$ or a point on~$\P_{23}^1$ or~$\P_{45}^1$
with the image of a degree~1 rational component running between~$p_1$ and
a point on either~$\P_{23}^1$ or~$\P_{45}^1$.
The four types of  graphs possibly contributing  the integral in~\eref{M3loc_e} 
are shown in the left half of Figure~\ref{P4loc_fig2},
where $\pm$ on the bottom vertex indicates whether it lies on~$\P_{23}^1$ or~$\P_{45}^1$,
respectively.
In the computations below, we first assume that $i\!=\!+$.\\

\begin{figure}
\begin{pspicture}(2,1.2)(11,3)
\psset{unit=.3cm}
\pscircle*(14,9){.2}\rput(15.8,9){\sm{$(1,3)$}}
\pscircle*(14,5){.2}\rput(14.8,5){\sm{$i$}}
\psline[linewidth=.05](14,9)(14,5)
\psline[linewidth=.05](14,9)(13,10)\rput(13,10.7){\sm{$1$}}
\pscircle*(19,9){.2}\rput(20.8,9){\sm{$(1,2)$}}
\pscircle*(19,5){.2}\rput(20.8,5){\sm{$(i,1)$}}
\psline[linewidth=.05](19,9)(19,5)
\psline[linewidth=.05](19,9)(18,10)\rput(18,10.7){\sm{$1$}}
\pscircle*(24,9){.2}\rput(25.8,9){\sm{$(1,1)$}}
\pscircle*(24,5){.2}\rput(25.8,5){\sm{$(i,2)$}}
\psline[linewidth=.05](24,9)(24,5)
\psline[linewidth=.05](24,9)(23,10)\rput(23,10.7){\sm{$1$}}
\pscircle*(29,9){.2}\rput(29.8,9){\sm{$1$}}
\pscircle*(29,5){.2}\rput(30.8,5){\sm{$(i,3)$}}
\psline[linewidth=.05](29,9)(29,5)
\psline[linewidth=.05](29,9)(28,10)\rput(28,10.7){\sm{$1$}}
\pscircle*(39,9){.2}\rput(40.8,9){\sm{$(1,3)$}}
\pscircle*(39,5){.2}\rput(39.8,5){\sm{$i$}}
\psline[linewidth=.05](39,9)(39,5)
\psline[linewidth=.05](39,9)(38,10)\rput(38,10.7){\sm{$1$}}
\psline[linewidth=.05](39,5)(38,4)\rput(37.5,4){\sm{$2$}}
\pscircle*(44,9){.2}\rput(45.8,9){\sm{$(1,2)$}}
\pscircle*(44,5){.2}\rput(45.8,5){\sm{$(i,1)$}}
\psline[linewidth=.05](44,9)(44,5)
\psline[linewidth=.05](44,9)(43,10)\rput(43,10.7){\sm{$1$}}
\psline[linewidth=.05](44,5)(43,4)\rput(42.5,4){\sm{$2$}}
\pscircle*(49,9){.2}\rput(50.8,9){\sm{$(1,1)$}}
\pscircle*(49,5){.2}\rput(50.8,5){\sm{$(i,2)$}}
\psline[linewidth=.05](49,9)(49,5)
\psline[linewidth=.05](49,9)(48,10)\rput(48,10.7){\sm{$1$}}
\psline[linewidth=.05](49,5)(48,4)\rput(47.5,4){\sm{$2$}}
\pscircle*(54,9){.2}\rput(54.8,9){\sm{$1$}}
\pscircle*(54,5){.2}\rput(55.8,5){\sm{$(i,3)$}}
\psline[linewidth=.05](54,9)(54,5)
\psline[linewidth=.05](54,9)(53,10)\rput(53,10.7){\sm{$1$}}
\psline[linewidth=.05](54,5)(53,4)\rput(52.5,4){\sm{$2$}}
\end{pspicture}
\caption{The two sets of graphs possibly contributing to 
the integrals~\eref{M3loc_e} and~\eref{M3loc_e2}, with $i\!\in\!\{+,-\}$ 
in the first case and 
$i\!\in\!\{2,3,4,5\}$ in the second case.} 
\label{P4loc_fig2}
\end{figure}
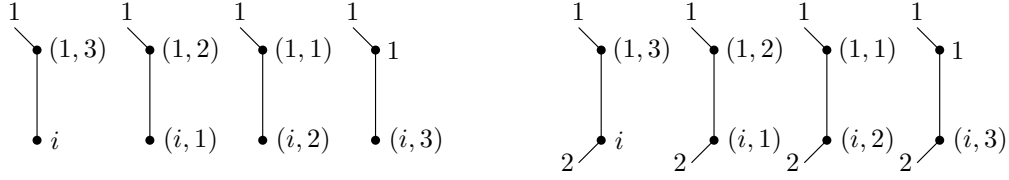

\noindent
The locus represented by the first diagram in Figure~\ref{P4loc_fig2}
is isomorphic to $\ov\cM_{3,2}\!\times\!\P^1$. 
The space of deformations of this locus consists of smoothing the node
and turning the line around~it away from~$\P_{23}^1$;
the equivariant euler class of the space of deformations is~thus
$$(-\al\!-\!x\!-\!\psi_2) (2\al\!+\!x)^2=-(\al\!+\!x\!+\!\psi_2) (2\al\!+\!x)^2\,,$$
where $x\!=\!c_1(\cO_{\P^1}(1))\!\in\!H^1(\P^1;\Z)$ is the standard generator.
The euler class of the obstruction bundle is given~by
$$e\big(\E_3^*\!\otimes\!T_{p_1}\P^4\big)
=\big(\la_3-\al\la_2+\al^2\la_1-\al^3\big)^2
\big(\la_3+\al\la_2+\al^2\la_1+\al^3\big)^2\,.$$
By \cite[(5.3)]{Mumford},
$$\big(\la_3-\al\la_2+\al^2\la_1-\al^3\big)
\big(\la_3+\al\la_2+\al^2\la_1+\al^3\big)=-\al^6\,.$$
The contribution of the first diagram in Figure~\ref{P4loc_fig2} 
to~\eref{M3loc_e} is~thus
\BE{M3loc_e1a}\begin{split}
&-\int_{\ov\cM_{3,2}\times\P^1}\frac{\al^{12}}{(\al\!+\!x\!+\!\psi_2) (2\al\!+\!x)^2}
=\frac52 \lr{x,\P^1}\blr{\psi_2^8,\ov\cM_{3,2}}
=\frac52 \blr{\psi_1^7,\ov\cM_{3,1}}= \frac{5}{165888}\,.
\end{split}\EE
The second equality above applies the dilaton equation \cite[Exercise~25.2.7]{MirSym};
the last follows from the first column in Table~\ref{M31_tbl}.\\

\noindent
The locus represented by the second diagram in Figure~\ref{P4loc_fig2}
is isomorphic to $\ov\cM_{2,2}\!\times\!\ov\cM_{1,1}\!\times\!\P^1$. 
The space of deformations of this locus consists of smoothing the two nodes
and moving the bottom node away from~$\P_{23}^1$;
the equivariant euler class of the space of deformations is~thus
$$(-\al\!-\!x\!-\!\pst)(\al\!+x\!-\!\psb)(\al\!+\!x)(2\al\!+\!x)^2
=-(\al\!+\!x\!+\!\pst)(\al\!+x\!-\!\psb)(\al\!+\!x)(2\al\!+\!x)^2\,,$$
where $\pst\!\in\!H^*(\ov\cM_{2,2})$ and $\psb\!\in\!H^*(\ov\cM_{1,1})$.
The euler class of the obstruction bundle is given~by
\begin{equation*}\begin{split}
&e\big(\E_2^*\!\otimes\!T_{p_1}\P^4\big)e\big(\E_1^*\!\otimes\!T\P^4|_{\P^1_{23}}\big)\\
&\qquad\qquad=\big(\la_2\!-\!\al\la_1\!+\!\al^2\big)^2\big(\la_2\!+\!\al\la_1\!+\!\al^2\big)^2
\big(\lb\!-\!2x\big)\big(\lb\!-\!(\al\!+\!x)\big)\big(\lb\!-\!(2\al\!+\!x)\big)^2\\
&\qquad\qquad=4\al\big(\la_2\!-\!\al\la_1\!+\!\al^2\big)^2\big(\la_2\!+\!\al\la_1\!+\!\al^2\big)^2
(3\lb x-2\al x\!+\!\al\lb)\big(\lb\!-\!(\al\!+\!x)\big),
\end{split}\end{equation*}
where $\lb\!\in\!H^*(\ov\cM_{1,1})$
By \cite[(5.3)]{Mumford},
$$\big(\la_2\!-\!\al\la_1\!+\!\al^2\big)\big(\la_2\!+\!\al\la_1\!+\!\al^2\big)=\al^4\,.$$
Since $\psi_1\!=\!\la$ on $\ov\cM_{1,1}$, 
the contribution of the second diagram in Figure~\ref{P4loc_fig2} 
to~\eref{M3loc_e} is~thus
\BE{M3loc_e1b}\begin{split}
\int_{\ov\cM_{2,2}\times\ov\cM_{1,1}\times\P^1}
\frac{4\al^9 (3\lb x-2\al x\!+\!\al\lb) }
{(\al\!+\!x\!+\!\pst)(\al\!+\!x)(2\al\!+\!x)^2}
&=5\lr{\la,\ov\cM_{1,1}}\lr{x,\P^1}\blr{\psi_2^5,\ov\cM_{2,2}}\\
&=\frac{5}{24}\blr{\psi_1^4,\ov\cM_{2,1}}=\frac{5}{27648}\,.
\end{split}\EE
The second equality above applies the dilaton equation \cite[Exercise~25.2.7]{MirSym};
the last follows from the first column in Table~\ref{M21_tbl}.\\

\begin{table}
\begin{center}
\begin{small}
\begin{tabular}{||c|c|c|c|c|c|c||}
\hline\hline
\!$\psi_1^7$\!& \!$\psi_1^6\la_1$\!& \!$\psi_5^2\la_1^2$\!& \!$\psi_1^5\la_2$\!& 
\!$\psi_1^4\la_1^3$\!&  \!$\psi_1^4\la_1\la_2$\!& \!$\psi_1^4\la_3$\!\\
\hline
\!$\frac{1}{82944}$\!& \!$\frac{7}{138240}$\!& \!$\frac{41}{290304}$\!& \!$\frac{41}{580608}$\!& 
\!$\frac{23}{96768}$\!& \!$\frac{23}{193536}$\!& \!$\frac{31}{967680}$\!\\
\hline\hline
\end{tabular}
\begin{tabular}{c}
\hspace{3in}${}$\\
\end{tabular}
\begin{tabular}{||c|c|c|c|c|c|c|c|c||}
\hline\hline
\!$\psi_1^3\la_1^4$\!& \!$\psi_1^3\la_1^2\la_2$\!& \!$\psi_1^3\la_1\la_3$\!& 
\!$\psi_1^3\la_2^2$\!&
\!$\psi_1^2\la_1^5$\!& \!$\psi_1^2\la_1^3\la_2$\!& \!$\psi_1^2\la_1^2\la_3$\!&
\!$\psi_1^2\la_1\la_2^2$\!& \!$\psi_1^2\la_2\la_3$\!\\
\hline
\!$\frac{41}{181440}$\!& \!$\frac{41}{362880}$\!&  \!$\frac{41}{1451520}$\!&
\!$\frac{41}{725760}$\!&
\!$\frac{1}{7560}$\!& $\frac{1}{15120}$& $\frac{1}{60480}$&
$\frac{1}{30240}$& $\frac{1}{120960}$\\
\hline\hline
\end{tabular}
\end{small}
\end{center}
\caption{The top intersections of $\la$-classes and $\psi_1^i$ with $i\!\ge\!2$
on $\ov\cM_{3,1}$; the intersections with $\psi_1^1$ are obtained by multiplying
the corresponding numbers in Table~\ref{M2and3_tbl} by~4.}
\label{M31_tbl}
\end{table}

\noindent
The locus represented by the third diagram in Figure~\ref{P4loc_fig2}
is isomorphic to $\ov\cM_{1,2}\!\times\!\ov\cM_{2,1}\!\times\!\P^1$. 
The euler class of its deformation space is as in the previous paragraph.
The euler class of the obstruction bundle is given~by
\begin{equation*}\begin{split}
&e\big(\E_1^*\!\otimes\!T_{p_1}\P^4\big)e\big(\E_2^*\!\otimes\!T\P^4|_{\P^1_{23}}\big)\\
&\qquad=\big(\lt\!-\!\al\big)^2\big(\lt\!+\!\al\big)^2
\big(\la_2\!-\!2x\la_1\big)
\big(\la_2\!-\!(\al\!+\!x)\la_1\!+\!(\al\!+\!x)^2\big)
\big(\la_2\!-\!(2\al\!+\!x)\la_1\!+\!(2\al\!+\!x)^2\big)^2,
\end{split}\end{equation*}
where $\lt\!\in\!H^*(\ov\cM_{1,2})$.
Since $\la^2\!=\!0$ on $\ov\cM_{1,2}$ and $\la_2^2,2\la_2\!-\!\la_1^2\!=\!0$ on $\ov\cM_2$,
the contribution of the third diagram in Figure~\ref{P4loc_fig2} 
to~\eref{M3loc_e} is~thus
\BE{M3loc_e1c}\begin{split}
&-\int_{\ov\cM_{1,2}\times\ov\cM_{2,1}\times\P^1}
\frac{4\al^8\la_1(2\al^2\la_1\!-\!4\al\la_1^2- 
(8\al^2\!-\!24\al\la_1\!+\!29\la_1^2)x)}
{(\al\!+\!x\!+\!\psi_1)(\al\!+x\!-\!\psi_2)(\al\!+\!x)(2\al\!+\!x)^2}\\
&\qquad\qquad=\blr{\psi_1^2,\ov\cM_{1,2}}\lr{x,\P^1}
\blr{\la_1^3\psi_1\!-\!8\la_1^2\psi_1^2\!+\!8\la_1\psi_1^3,\ov\cM_{2,1}}
=-\frac{1}{11520}\,.
\end{split}\EE
The second equality above applies the dilaton equation \cite[Exercise~25.2.7]{MirSym}
and uses the first column in Table~\ref{M2and3_tbl} and the third in Table~\ref{M21_tbl}.\\

\noindent
The locus represented by the fourth diagram in Figure~\ref{P4loc_fig2}
is isomorphic to $\ov\cM_{3,1}\!\times\!\P^1$. 
The space of deformations of this locus consists of smoothing the (bottom) node
and moving it from~$\P_{23}^1$;
the equivariant euler class of the space of deformations is~thus
$$(\al\!+x\!-\!\psi_1)(\al\!+\!x)(2\al\!+\!x)^2\,.$$
The euler class of the obstruction bundle is given~by
\begin{equation*}\begin{split}
e\big(\E_3^*\!\otimes\!T\P^4|_{\P^1_{23}}\big)
=\big(\la_3\!-\!2x\la_2\big)
&\big(\la_3\!-\!(\al\!+\!x)\la_2\!+\!(\al\!+\!x)^2\la_1\!-\!(\al\!+\!x)^3\big)\\
&\times\big(\la_3\!-\!(2\al\!+\!x)\la_2\!+\!(2\al\!+\!x)^2\la_1\!-\!(2\al\!+\!x)^3\big)^2.
\end{split}\end{equation*}
Since $\la_1^2\!=\!2\la_2$, $\la_2^2\!=\!2\la_1\la_3$, and $\la_3^2\!=\!0$ on $\ov\cM_{3,1}$,
the contribution of the fourth diagram in Figure~\ref{P4loc_fig2} 
to~\eref{M3loc_e} is~thus
\BE{M3loc_e1d}\begin{split}
&\int_{\ov\cM_{3,1}\times\P^1}
\frac{\al^6 (16\al^2\la_1^4\!-\!16\al\la_1^5\!+\!9\la_1^6\!-\!64\al^3\la_3)}
{(\al\!+x\!-\!\psi_1)(\al\!+\!x)(2\al\!+\!x)^2}\\
&\qquad+\int_{\ov\cM_{3,1}}
\frac{\al^2(128\al^4\la_1^2\!-\!256\al^3\la_1^3\!+\! 
   424\al^2\la_1^4\!-\!308\al\la_1^5\!+\!141\la_1^6\!-\!768\al^3\la_3)}
{8(\al\!-\!\psi_1)}\\
&=\frac18\lr{x,\P^1}
\blr{69\la_1^6\psi_1\!-\!148\la_1^5\psi_1^2\!+\!232\la_1^4\psi_1^3
\!-\!256\la_1^3\psi_1^4
\!+\!128\la_3\psi_1^4\!+\!128\la_1^2\psi_1^5,\ov\cM_{3,1}}
=-\frac{1}{2880}\,.
\end{split}\EE
Combining the numbers in~\eref{M3loc_e1a}-\eref{M3loc_e1d} and multiplying
the result by~2 (to account for $i\!=\!\pm$), we obtain the first equality
in~\eref{P4_e}.
This conclusion agrees with A.~Gathmann's {\it growi} program.\\

\noindent
We next apply \cite[Theorem~3.6]{GV} with the action of $\T\!\equiv\!(\C^*)^2$
on~$\P^4$ given~by
$$(c_1,c_2)\cdot[Z_1,Z_2,Z_3,Z_4,Z_5]=[Z_1,c_1Z_2,c_1^{-1}Z_3,c_2Z_4,c_2^{-1}Z_5]$$
and its linear lift to $\cO_{\P^4}(1)$ defined in the same way.
The fixed locus of this action consists~of the five~points
$$p_1\equiv[1,0,0,0,0], \qquad\ldots,\qquad p_5\equiv[0,0,0,0,1].$$
Let
$$\al_1=c_1\big(\cO_{\P^4}(1)\big)\big|_{p_2}\in H^*_{\T}\,,~~~
\al_2=c_1\big(\cO_{\P^4}(1)\big)\big|_{p_4}\in H^*_{\T}\,,\quad
V=\big\{[0,Z_2,Z_3,Z_4,Z_5]\!\in\!\P^4\big\}.$$
We denote~by 
$$\ov\fM_{3,1;(1)}^V(\P^4,1)_{p_1}\subset \ov\fM_{3,1;(1)}^V(\P^4,1)$$
the preimage of $p_1$ under the evaluation morphism~$\ev_1$. 
We will compute
\BE{M3loc_e2}\begin{split} 
\int_{[\ov\fM_{3,1;(1)}^V(\P^4,1)_{p_1}]^{\vir}}\!\!1\,.
\end{split}\EE
The $\C^*$-fixed loci of this moduli space consist of maps sending 
the positive-genus components to~$p_1$ and at most one of the fixed points~$p_i$ 
with $i\!=\!2,3,4,5$;
the image of the non-contracted degree~1 rational tail runs between~$p_1$ 
and one of the fixed points~$p_i$ with $i\!=\!2,3,4,5$.
The four types of  graphs possibly contributing to the integrals
in~\eref{M3loc_e2} are shown in the right half of Figure~\ref{P4loc_fig2}.
In the computations below, we first assume that $i\!=\!2$.\\

\noindent
The locus represented by the first diagram in the right half of Figure~\ref{P4loc_fig2}
is isomorphic to~$\ov\cM_{3,2}$. 
Its deformations consist of smoothing the node
and turning the line around~it away from~$p_2$;
the equivariant euler class of the space of deformations is~thus
$$(-\al_1\!-\!\psi_2)\big(\al_1\!-\!(-\al_1)\big)(\al_1\!-\!\al_2) 
\big(\al_1\!-\!(-\al_2)\big) 
=-2\al_1(\al_1^2\!-\!\al_2^2)(\al_1\!+\!\psi_2)\,.$$
The obstruction bundle is as for the first diagram in Figure~\ref{P4loc_fig2},
but its euler class  is now given~by
$$\prod_{j=1,2}\!\!\!\big((\al_j^3\!-\!\al_j^2\la_1\!+\!\al_j\la_2\!-\!\la_3)
(\al_j^3\!+\!\al_j^2\la_1\!+\!\al_j\la_2\!+\!\la_3)\big)
=\al_1^6\al_2^6\,;$$
the equality holds by \cite[(5.3)]{Mumford}.
The contribution of the fifth diagram in Figure~\ref{P4loc_fig2} 
to~\eref{M3loc_e2} is~thus
\BE{M3loc_e2a}\begin{split}
-\int_{\ov\cM_{3,2}}\frac{\al_1^6\al_2^6}{2\al_1(\al_1^2\!-\!\al_2^2)(\al_1\!+\!\psi_2)}
&=-\frac12\cdot\frac{\al_2^6}{\al_1^4(\al_1^2\!-\!\al_2^2)} \blr{\psi_2^8,\ov\cM_{3,2}}\\
&=-\frac12\cdot\frac{\al_2^6}{\al_1^4(\al_1^2\!-\!\al_2^2)} 
\blr{\psi_1^7,\ov\cM_{3,1}}=
-\frac12\cdot \frac{1}{82944}\cdot\frac{\al_2^6}{\al_1^4(\al_1^2\!-\!\al_2^2)} \,.
\end{split}\EE
The second equality above applies the dilaton equation \cite[Exercise~25.2.7]{MirSym};
the last follows from the first column in Table~\ref{M31_tbl}.\\

\noindent
The locus represented by the second diagram  in the right half of Figure~\ref{P4loc_fig2}
is isomorphic~to 
$$F_2\equiv\ov\cM_{2,2} \times \ov\fM_{1,0;(1),(1)}^{0,\i}(\P^1,1)_{\sim}\,.$$ 
The equivariant euler class of the space of deformations becomes
$$-2\al_1(\al_1^2\!-\!\al_2^2)(\al_1\!+\!\pst)(\al_1\!-\!\psb),$$
where $\pst\!\in\!H^*(\ov\cM_{2,2})$ and $\psb\!\!=\!\psi_{\i}$ is 
on the rubber moduli space; see \cite[Section~3.3]{GV}.
The euler class of the obstruction bundle is now given~by
\begin{equation*}\begin{split}
&e\big(\E_2^*\!\otimes\!T_{p_1}\P^4\big)e\big(\E_1^*\!\otimes\!T_{p_2}V\big)\\
&\qquad=\prod_{j=1,2}\!\!\!\big((\al_j^2\!-\!\al_j\la_1\!+\!\la_2)
(\al_j^2\!+\!\al_j\la_1\!+\!\la_2)\big)
\cdot(2\al_1\!-\!\lb)(\al_1\!-\!\al_2\!-\!\lb)(\al_1\!+\!\al_2\!-\!\lb)\\
&\qquad\cong-\al_1^4\al_2^4(5\al_1^2\!-\!\al_2^2)\lb
\mod H_{\T}^*\subset H_{\T^*}(F_2)\,,
\end{split}\end{equation*}
where $\lb\!\in\!H^*(F_2)$ is the pull-back of $\la\!\in\!H^*(\ov\cM_{1,1})$
by either forgetful morphism~$f$ from the second factor.
Since 
$$\ov\fM_{1,0;(1),(1)}^{0,\i}(\P^1,1)_{\sim}
\approx\ov\cM_{1,1}\times \ov\fM_{0,1;(1),(1)}^{0,\i}(\P^1,1)_{\sim}$$
as spaces and the last factor above is a point, $\psb$ vanishes on 
the virtual class of the second factor in~$F_2$.
The second proofs of~\eref{M21_e2} and~\eref{M30_e2} readily show~that 
$$f_*\big[\ov\fM_{1,0;(1),(1)}^{0,\i}(\P^1,1)_{\sim}\big]^{\vir}
=\big[\ov\cM_{1,1}\big]\,.$$
Thus, the contribution of the sixth diagram in Figure~\ref{P4loc_fig2} 
to~\eref{M3loc_e2} is
\BE{M3loc_e2b}\begin{split}
&\int_{[F_2]^{\vir}}\frac{\al_1^4\al_2^4(5\al_1^2\!-\!\al_2^2)\lb}
{2\al_1(\al_1^2\!-\!\al_2^2)(\al_1\!+\!\pst)\al_1}
=-\frac12\cdot
\frac{\al_2^4(5\al_1^2\!-\!\al_2^2)}{\al_1^4(\al_1^2\!-\!\al_2^2)}
\cdot\blr{\psi_2^5,\ov\cM_{2,2}}
\lr{\la,\ov\cM_{1,1}}\\
&\qquad\qquad=-\frac12\cdot
\frac{\al_2^4(5\al_1^2\!-\!\al_2^2)}{\al_1^4(\al_1^2\!-\!\al_2^2)}
\cdot\frac{1}{24}\blr{\psi_2^4,\ov\cM_{2,1}}
=-\frac12\cdot\frac{1}{27648}\cdot
\frac{\al_2^4(5\al_1^2\!-\!\al_2^2)}{\al_1^4(\al_1^2\!-\!\al_2^2)}\,.
\end{split}\EE
The second equality above applies the dilaton equation \cite[Exercise~25.2.7]{MirSym};
the last follows from the first column in Table~\ref{M21_tbl}.\\

\noindent
The locus represented by the third diagram  in the right half of Figure~\ref{P4loc_fig2}
is isomorphic~to 
$$F_3\equiv\ov\cM_{1,2} \times \ov\fM_{2,0;(1),(1)}^{0,\i}(\P^1,1)_{\sim}
\equiv F_{3;1}\times F_{3;2}\,.$$ 
The equivariant euler class of its deformation space is as in the previous paragraph.
The euler class of the obstruction bundle is now given~by
\begin{equation*}\begin{split}
&e\big(\E_1^*\!\otimes\!T_{p_1}\P^4\big)e\big(\E_2^*\!\otimes\!T_{p_2}V\big)
=\prod_{j=1,2}\!\!\!\big((\al_j\!-\!\lt)(\al_j\!+\!\lt)\big)
\cdot(4\al_1^2\!-\!2\al_1\la_1\!+\!\la_2)\\
&\hspace{2in}\times
\big((\al_1\!-\!\al_2)^2\!-\!(\al_1\!-\!\al_2)\la_1\!+\!\la_2\big)
\big((\al_1\!+\!\al_2)^2\!-\!(\al_1\!+\!\al_2)\la_1\!+\!\la_2\big)\\
&\qquad\cong-\al_1^3\al_2^2(9\al_1^2\!-\!\al_2^2)\la_1^3
+\frac12 \al_1^2\al_2^2(25\al_1^4\!-\!10\al_1^2\al_2^2\!+\!\al_2^4)\la_1^2
\mod H_{\T}^*\!\otimes\!H^2(F_{3;2})\subset H_{\T^*}(F_3)\,,
\end{split}\end{equation*}
where $\lt\!\in\!H^*(\ov\cM_{1,2})$ and $\la_1,\la_2\!\in\!H^*(F_{3;2})$
are the pull-backs of the Hodge classes $\la_1,\la_2\!\in\!H^*(\ov\cM_2)$
by the forgetful morphism~$f$ from the second factor.
Since 
$$\ov\fM_{2,0;(1),(1)}^{0,\i}(\P^1,1)_{\sim}
\approx\ov\cM_{2,1}\times\!\ov\!\fM_{0,1;(1),(1)}^{0,\i}(\P^1,1)_{\sim}
\cup 
\big(\ov\cM_{1,1}\!\times\!\ov\cM_{1,1}\!\times\! 
\ov\fM_{0,2;(1),(1)}^{0,\i}(\P^1,1)_{\sim}\big)\big/\Z_2$$
as spaces and the last factors in the two spaces on the RHS above
are zero- and one-dimensional, $\psb^2$ vanishes on the virtual class of~$F_{3;2}$.
Since $\la_1^3$ vanishes on the divisor in~$\ov\cM_2$ consisting of two-component curves and
\BE{RubbPsi_e} 
\blr{\psi_{\i}^{k-1},\ov\fM_{0,k;(1),(1)}^{0,\i}(\P^1,1)_{\sim}}=1
\qquad\forall~k\ge3,\EE
the second proofs of~\eref{M21_e2} and~\eref{M30_e2} readily show~that 
\begin{equation*}\begin{split}
\blr{\la_1^3, [\ov\fM_{2,0;(1),(1)}^{0,\i}(\P^1,1)_{\sim}]^{\vir}}
&=\blr{e(\C^2/T\Si_2),\Si_2}
\blr{\la_1^3,[\ov\cM_2]}=2\blr{\la_1^3,[\ov\cM_2]}\,;\\
\blr{\psi_{\i}\la_1^2,[\ov\fM_{2,0;(1),(1)}^{0,\i}(\P^1,1)_{\sim}]^{\vir}}
&=\blr{\la_1^2,\ov\cM_{1,1}}^2\,.
\end{split}\end{equation*}
Thus, the contribution of the seventh diagram in Figure~\ref{P4loc_fig2} 
to~\eref{M3loc_e2} is
\BE{M3loc_e2c}\begin{split}
&\int_{[F_3]^{\vir}}\frac{\al_1^3\al_2^2(9\al_1^2\!-\!\al_2^2)\la_1^3}
{2\al_1(\al_1^2\!-\!\al_2^2)(\al_1\!+\!\pst)\al_1}
-\frac12\int_{[F_3]^{\vir}}
\frac{\al_1^2\al_2^2(25\al_1^4\!-\!10\al_1^2\al_2^2\!+\!\al_2^4)\la_1^2\psi_{\i}}
{2\al_1(\al_1^2\!-\!\al_2^2)(\al_1\!+\!\pst)\al_1^2}\\
&=\frac{\lr{\pst^2,\ov\cM_{1,2}}}{2\,\al_1^4(\al_1^2\!-\!\al_2^2)}
\Big(\al_1^2\al_2^2(9\al_1^2\!-\!\al_2^2)\cdot2\blr{\la_1^3,[\ov\cM_2]}
-\al_2^2(25\al_1^4\!-\!10\al_1^2\al_2^2\!+\!\al_2^4)
\cdot\frac{\lr{\la,\ov\cM_{1,1}}^2}{2}\Big)\\
&=-\frac12\cdot\frac{1}{138240}\cdot\frac{\al_2^2(89\al_1^4\!-\!46\al_1^2\al_2^2\!+\!5\al_2^4)}
{\al_1^4(\al_1^2\!-\!\al_2^2)}\,;
\end{split}\EE
the last equality follows from Table~\ref{M2and3_tbl}.\\

\noindent
The locus represented by the last diagram in  Figure~\ref{P4loc_fig2}
is isomorphic~to 
$$F_4\equiv \ov\fM_{3,0;(1),(1)}^{0,\i}(\P^1,1)_{\sim}\,.$$ 
The equivariant euler class of its deformation space is reduced~to
$2\al_1(\al_1^2\!-\!\al_2^2)(\al_1\!-\!\psb)$.
The euler class of the obstruction bundle becomes
\begin{equation*}\begin{split}
&e\big(\E_3^*\!\otimes\!T_{p_2}V\big)=
(8\al_1^3\!-\!4\al_1^2\la_1\!+\!2\al_1\la_2\!-\!\la_3)
\big((\al_1\!-\!\al_2)^3\!-\!(\al_1\!-\!\al_2)^2\la_1\!+\!(\al_1\!-\!\al_2)\la_2\!-\!\la_3\big)\\
&\hspace{2.75in}
\big((\al_1\!+\!\al_2)^3\!-\!(\al_1\!+\!\al_2)^2\la_1\!+\!(\al_1\!+\!\al_2)\la_2\!-\!\la_3\big)\\
&\qquad\cong -\frac12\al_1^2(9\al_1^2\!-\!\al_2^2)\la_1^5
+\frac14 \al_1(45\al_1^4\!-\!14\al_1^2\al_2^2\!+\!\al_2^4)\la_1^4\\
&\qquad\quad-(18\al_1^6\!-\!20\al_1^4\al_2^2\!+\!2\al_1^2\al_2^4)\la_1^3
-(17\al_1^6\!+\!45\al_1^4\al_2^2\!+\!3\al_1^2\al_2^4\!-\!\al_2^6)\la_3
\mod H_{\T}^*\!\otimes\!H^{\le4}(F_4),
\end{split}\end{equation*}
where $\la_i\!\in\!H^*(F_4)$
is the pull-back of the Hodge class $\la_i\!\in\!H^*(\ov\cM_3)$
by the forgetful morphism~$f$.
Since 
\begin{equation*}\begin{split}
\ov\fM_{3,0;(1),(1)}^{0,\i}(\P^1,1)_{\sim}
\approx\ov\cM_{3,1}\times\!\ov\!\fM_{0,1;(1),(1)}^{0,\i}(\P^1,1)_{\sim}
&\cup  \ov\cM_{2,1}\!\times\!\ov\cM_{1,1}\!\times\! 
\ov\fM_{0,2;(1),(1)}^{0,\i}(\P^1,1)_{\sim}\\
&\cup  \big( (\ov\cM_{1,1})^3\!\times\! 
\ov\fM_{0,3;(1),(1)}^{0,\i}(\P^1,1)_{\sim}\big)\big/\S_3
\end{split}\end{equation*}
as spaces and the last factors in the three spaces on the RHS above
are zero-, one-, and two-dimensional, respectively, 
$\psb^3$ vanishes on the virtual class of~$F_4$.
Since $\la_1^5$ vanishes on the divisor in~$\ov\cM_3$ consisting of two-component curves and
and $\la_1^4$ vanishes on the subvariety  consisting of four-component curves
(three elliptic curves attached to a~$\P^1$),
\eref{RubbPsi_e} and the second proofs of~\eref{M21_e2} and~\eref{M30_e2} give 
\begin{equation*}\begin{split}
\blr{\la_1^5, [\ov\fM_{3,0;(1),(1)}^{0,\i}(\P^1,1)_{\sim}]^{\vir}}
&=\blr{\la_1^5 e(\E^*/T\Si_3),[\ov\cM_{3,1}]}
=\blr{\la_1^5\psi_1^2\!-\!\la_1^6\psi_1,[\ov\cM_{3,1}]}\,;\\
\blr{\psi_{\i}\la_1^4,[\ov\fM_{3,0;(1),(1)}^{0,\i}(\P^1,1)_{\sim}]^{\vir}}
&=8\blr{\la,\ov\cM_{1,1}}\blr{\la_1^3,\ov\cM_2};\\
\blr{\psi_{\i}^2\la_1^3,[\ov\fM_{3,0;(1),(1)}^{0,\i}(\P^1,1)_{\sim}]^{\vir}}
&=\blr{\la,\ov\cM_{1,1}}^3\,,\quad
\blr{\psi_{\i}^2\la_3,[\ov\fM_{3,0;(1),(1)}^{0,\i}(\P^1,1)_{\sim}]^{\vir}}
=\frac16\blr{\la,\ov\cM_{1,1}}^3\,.
\end{split}\end{equation*}
Thus, the contribution of the last diagram in Figure~\ref{P4loc_fig2} 
to~\eref{M3loc_e2} is
\begin{equation*}\begin{split}
&-\frac12\int_{[F_4]^{\vir}}\frac{\al_1^2(9\al_1^2\!-\!\al_2^2)\la_1^5}
{2\al_1(\al_1^2\!-\!\al_2^2)\al_1}
+\frac14\int_{[F_4]^{\vir}}\!\!\!
\frac{\al_1(45\al_1^4\!-\!14\al_1^2\al_2^2\!+\!\al_2^4)\la_1^4\psi_{\i}}
{2\al_1(\al_1^2\!-\!\al_2^2)\al_1^2}\\
&\hspace{2.5in}
-\frac16\int_{[F_4]^{\vir}}\!\!\!
\frac{(125\al_1^6\!-\!75\al_1^4\al_2^2\!+\!15\al_1^2\al_2^4\!-\!\al_2^6)\la_1^3\psi_{\i}^2}
{2\al_1(\al_1^2\!-\!\al_2^2)\al_1^3}\,.
\end{split}\end{equation*}
The preceding set of equations reduces this~to
\BE{M3loc_e2d}\begin{split}
&\frac{1}{2\,\al_1^4(\al_1^2\!-\!\al_2^2)}
\Bigg(-\frac12\al_1^4\big(9\al_1^2\!-\!\al_2^2\big)
\big(\blr{\la_1^5\psi_1^2,[\ov\cM_{3,1}]}\!-\!4\blr{\la_1^6,[\ov\cM_3]} \big)\\
&\hspace{1.1in}
+\al_1^2\big(45\al_1^4\!-\!14\al_1^2\al_2^2\!+\!\al_2^4\big)\cdot
2\blr{\la,\ov\cM_{1,1}}\blr{\la_1^3,\ov\cM_2}\\
&\hspace{1.1in}
-\big(125\al_1^6\!-\!75\al_1^4\al_2^2\!+\!15\al_1^2\al_2^4\!-\!\al_2^6\big)
\frac{\blr{\la,\ov\cM_{1,1}}^3}{6}\Bigg)\\
&\quad=-\frac12\cdot\frac{1}{2903040}\cdot
\frac{1747\al_1^6\!-\!1577\al_1^4\al_2^2\!+\!441\al_1^2\al_2^4\!-\! 
35\al_2^6}{\al_1^4(\al_1^2\!-\!\al_2^2)}\,.
\end{split}\EE
The sum of \eref{M3loc_e2a}, \eref{M3loc_e2b}, \eref{M3loc_e2c}, and~\eref{M3loc_e2d}
multiplied by~2 (to account for $i\!=\!3$) is
$$-\frac{1}{2903040}
\cdot\frac{1747 \al_1^6\!+\!292\al_1^4\al_2^2}{\al_1^4(\al_1^2\!-\!\al_2^2)}\,.$$
Adding in the same expression with~$\al_1$ and~$\al_2$ interchanged (to account for 
$i\!=\!4,5$), we find~that 
$$\GW_{3,1;(1)}^{\P^4,V_1}(\pt)
=-\frac{97}{193536}\,.$$
Along with the first equality in~\eref{P4_e}, this confirms the $\de\!=\!1$ case
of the second equality in~\eref{P4_e}.

\section{The Cieliebak-Mohnke approach to GW-invariants}
\label{CM_sec}

\noindent
Theorem~\ref{main_thm} and Examples~\ref{deg0_eg}-\ref{P4_eg} answer a key question
arising in recent attempts to adapt the idea of~\cite{CM} to constructing positive-genus
GW-invariants geometrically.
In this section, we review this approach and discuss its connections with 
Theorem~\ref{main_thm} and Examples~\ref{deg0_eg}-\ref{P4_eg}.\\

\noindent
Suppose $(X,\om)$ is a compact symplectic manifold such that $\om$ represents 
an integral cohomology class.
By \cite[Theorem~1]{D}, the Poincare dual of every sufficiently large integer multiple~$\de\om$
of~$\om$ can be represented by a symplectic hypersurface~$V$ in~$(X,\om)$.
If $A\!\in\!H_2(X;\Z)\!-\!\{0\}$ can be represented by a $J$-holomorphic map
$u\!:\Si\!\lra\!X$ for some $\om$-tame almost complex structure, then 
$$A\cdot V= \de\,\om(A)>0.$$ 
The idea of~\cite{CM} is to define the primary genus~0 GW-invariants by counting
$J$-holomorphic maps $\P^1\!\lra\!X$ that pass through generic representatives
of constraints of the appropriate total dimension and send $A\!\cdot\!V$ points of~$\P^1$
to~$V$ and dividing the resulting number by $(A\!\cdot\!V)!$. 
In order to ensure that the sets of maps being counted are finite,
the almost complex structure~$J$ on~$X$ is allowed to vary with 
the domain of the map in a coherent way. 
For $\de$ sufficiently large and a generic coherent family of~$J$'s,
every non-constant $J$-holomorphic map $u\!:\P^1\!\lra\!X$ of $\om$-energy
at most $\om(A)$ intersects $X\!-\!V$ and sends at least three distinct  points
of the domain to~$V$; see \cite[Proposition~8.13]{CM}.\\

\noindent
The almost complex structures~$J$ used in~\cite{CM} are required to be 
compatible with~$V$, in the sense that $J(TV)\!\subset\!TV$.
A coherent family of such complex structures can be viewed as 
a special case of a {\it single} pair~$(J,\nu)$, with~$\nu$ as~\eref{nucond0_e}
 satisfying the first condition in~\eref{nuVrestr_e}.
By a standard cobordism argument, the resulting count of $(J,\nu)$-maps 
is independent of a generic pair~$(J,\nu)$ compatible with~$(\om,V)$.
As in \cite[Section~10]{CM}, the independence of the counts on~$V$ can be shown 
by defining such counts with respect to two transverse Donaldson's hypersurfaces,
$V$ and~$V'$, that are compatible with the same $\om$-tame almost complex 
structure~$J$ on~$X$:
the dimension-counting argument at the beginning of Section~\ref{DirComp_subs} 
implies that the number of maps does not change if an additional $J$-holomorphic 
hypersurface~$V'$ is added.

\begin{rmk}\label{CM_rmk0}
The counts defined in~\cite{CM} have not been directly shown  to be invariant
under deformations of~$\om$, which is a central property of GW-invariants
in symplectic topology.
This could be established by showing that two Donaldson's divisors 
with respect to deformation equivalent symplectic forms and of the same degree
are deformation equivalent through Donaldson's divisors.
While it remains unknown whether this is the case, 
the counts of~\cite{CM} are directly shown  in~\cite{K} to be invariant under small deformations
of~$\om$. 
\end{rmk}

\begin{rmk}\label{CM_rmk}
Pairs~$(J,\nu)$ as in Section~\ref{RelDfn_sec} have been standard 
on the symplectic side of GW-theory at least since \cite{RT1,RT2}.
Using such pairs in~\cite{CM} would have avoided the need for 
an elaborate coherency condition on families of almost complex structure
and would have simplified the transversality issues,
likely cutting down the paper by half to two thirds.
It would have also extended the construction to genus~0 invariants with constraints
pulled back from the Deligne-Mumford space.
\end{rmk}

\noindent
By \cite[Section~3.2]{LR}, any topological component of the preimage of
a $J$-holomorphic hypersurface~$V$ in~$X$
under  the limit $u\!:\!\Si\!\lra\!X$ of a sequence of $J$-holomorphic maps 
$u_k\!:\Si_k\!\lra\!X$ 
from smooth domains meeting~$X\!-\!V$ comes with a holomorphic section of 
the pull-back of the normal bundle to~$V$.
By \cite[Section~6]{IPrel}, this conclusion also applies to $(J,\nu)$-holomorphic maps,
if $J(TV)\!=\!TV$ and $\nu$ satisfies the first condition in~\eref{nuVrestr_e}.
If $J$ and~$\nu$ also satisfy~\eref{NijenCond_e} and
the second condition in~\eref{nuVrestr_e},
spaces of maps from stable domains that satisfy this limiting condition 
are of dimension at least two less than the space of maps from smooth domains
which meet~$X\!-\!V$.
If all relevant domains are stable, invariants of~$(X,\om,V)$ can then 
be defined by counting such maps; see Section~\ref{RelDfn_sec}.\\

\noindent
The attempts in~\cite{G,IPvfc} to extend the approach of~\cite{CM} to 
positive-genus GW-invariants utilize the ideas outlined in the previous paragraph.
A crucial claim of \cite{G,IPvfc} is that the resulting counts 
of relative genus~$g$ degree~$d$ $(J,\nu)$-maps are independent of the choice
of $(g,A)$-hollow hypersurface~$V$ (at least, if it is a Donaldson's hypersurface).
As illustrated by Theorem~\ref{main_thm} and Examples~\ref{deg0_eg}-\ref{P4_eg},
this claim is often, but not always, true.
As illustrated by the direct proof of Theorem~\ref{main_thm} in Section~\ref{DirComp_subs}, 
it is true precisely 
when the ideas outlined in the previous paragraph are {\it not} needed
to define the relative counts.
In these cases, the argument in Section~\ref{DirComp_subs} implies that the counts
do not change when a second $J$-holomorphic hypersurface~$V_2$ is added.

\begin{rmk}\label{Nij_rmk}
The Nijenhuis condition~\eref{NijenCond_e} on~$J$ and 
the second restriction on~$\nu$ in~\eref{nuVrestr_e} are central to the setups
in~\cite{G} and~\cite{IPvfc}.
However, neither~\cite{G} nor~\cite{IPvfc} shows that  
these conditions can be satisfied with respect to two Donaldson's hypersurfaces simultaneously
(and even more complicated combinations of hypersurfaces are used in~\cite{IPvfc});
for transversality reasons, the second condition in~\eref{nuVrestr_e} needs to be
achieved 
for any specified restriction of~$\nu$ to the intersection of the two hypersurfaces.
These properties are used to show that the defined counts are independent of 
the choice of Donaldson's divisor~$V$. 
The attempt in~\cite{G2} to address the condition~\eref{NijenCond_e} on~$J$ 
fails at the first and  basic step, for which the reader was referred to 
the proof of \cite[Theorem~6.17]{MS1}; 
the author of~\cite{G2} has agreed with this and withdrawn his claim.
However, it appears plausible that some version of the intended argument in~\cite{G2} could be carried~out.
\end{rmk}

\begin{rmk}\label{Nij_rmk3}
There have been extensive discussions of~\cite{G} and~\cite{IPvfc}
at and since the SCGP workshop which took place during the week of March 17-21, 2014.
Examples~\ref{P1_eg} and~\ref{P4_eg} directly contradict the main premise
behind the approaches in~\cite{G} and~\cite{IPvfc}, formulated as Axiom~A4
in \cite[Definition~1.5]{IPvfc} and \cite[Theorem~11.1]{IPvfc};
these are relevant to~\cite{G} after evaluating on appropriate insertions
(Example~\ref{deg0_eg} contradicts~\cite{IPvfc}, but is excluded in~\cite{G}).
This leaves little of substance in either~\cite{G} or~\cite{IPvfc},
as far as an alternative construction of counts of $J$-holomorphic maps is concerned.
While the crucial issues raised in Remark~\ref{Nij_rmk} are not even mentioned 
and the independence issue is addressed incorrectly in~\cite{G} because of the author's
apparent misunderstanding of \cite{RT1,RT2},
this 100-page manuscript consists mostly of setting up the approach
based on the analytic construction of the Deligne-Mumford moduli space in~\cite{RS}.
In contrast, the setup in~\cite{IPvfc} is carried out in a standard way,
similarly to our Section~\ref{RelDfn_sec}.
On the other hand, \cite{IPvfc} contains two additional central premises that 
are intended to take the approach of~\cite{CM} to the level of virtual classes
(instead of just numbers as in~\cite{G}):
\cite[Lemma~7.4]{IPvfc}, claiming that certain (never defined) moduli spaces are manifolds,
and \cite[(11.4)]{IPvfc}, a symplectic sum formula for virtual classes 
(instead of numbers);
the reader is referred to \cite{IPrel,IPsum} for the proofs of these statements.
However, the two statements require gluing maps with rubber components;
such gluing constructions are skipped and claimed to be unnecessary precisely because 
of a pre-print like~\cite{IPvfc} is {\it in preparation} (first cited in~2001).
These issues are discussed in more detail in \cite[Section~2]{SympSum};
unfortunately, the video of the discussion of~\cite{IPvfc} itself 
at the SCGP workshop has not been made publicly available due to T.~Parker's veto. 
\end{rmk}

\noindent
In principle, the approach of~\cite{CM} could be adapted to constructing 
positive-genus GW-invariants
by subtracting lower-genus contributions with appropriate coefficients
if the real dimension of the target~$X$ is~8 or less. 
These coefficients are determined by the chern classes of the divisor~$V$,
the top intersections of $\la$-classes on $\ov\cM_g$,
and the relative GW-theory of~$\P^1$.
While all of these are computable in some sense, it does not appear that 
the resulting coefficients would have reasonably simple expressions.

\end{document}